\newtheorem{thm}{\bf Theorem}[section]
\newtheorem{prop}[thm]{\bf Proposition}
\newtheorem{lem}[thm]{\bf Lemma}
\newtheorem{cor}[thm]{\bf Corollary}
\newtheorem{q}[thm]{\bf Question}
\newtheorem*{thm*}{\bf Theorem}
\newtheorem*{cor*}{\bf Corollary}
\theoremstyle{definition}
\newtheorem{rem}[thm]{\it Remark}
\newtheorem*{df*}{\bf Definition}
\newtheorem*{not*}{\bf Notation}
\newtheorem*{dfs*}{\bf Definitions}
\newtheorem*{ack*}{\bf Acknowledgements}
\newtheorem*{dfrem*}{\bf Definition and Remark}
\newtheorem*{conv*}{\bf Convention}
\def\P{\mathbb{P}}
\def\C{\mathbb{C}}
\def\F{\mathbb{F}}
\def\Q{\mathbb{Q}}
\def\Z{\mathbb{Z}}
\def\E{\mathcal{E}}
\def\X{\mathcal{X}}
\def\O{\mathcal{O}}
\def\PP{\mathbb{P}}
\DeclareMathOperator{\Spec}{Spec}
\DeclareMathOperator{\Proj}{Proj}
\DeclareMathOperator{\Ker}{Ker}
\DeclareMathOperator{\Pic}{Pic}
\DeclareMathOperator{\rank}{rank}
\DeclareMathOperator{\CH}{CH}
\DeclareMathOperator{\Image}{Im}
\DeclareMathOperator{\Coker}{Coker}
\DeclareMathOperator{\alg}{alg}
\DeclareMathOperator{\cl}{cl}
\DeclareMathOperator{\Zar}{Zar}
\DeclareMathOperator{\Top}{top}
\DeclareMathOperator{\NS}{NS}
\DeclareMathOperator{\Gal}{Gal}
\DeclareMathOperator{\van}{van}
\DeclareMathOperator{\nr}{nr}
\DeclareMathOperator{\Gr}{Gr}
\DeclareMathOperator{\et}{\text{\'et}}
\DeclareMathOperator{\Td}{Td}
\DeclareMathOperator{\Cores}{Cores}
\DeclareMathOperator{\inv}{inv}
\def\myrightarrow{{\setbox\z@\hbox{$\rightarrow$}\dimen0\ht\z@\multiply\dimen0 6\divide\dimen0 10\ht\z@\dimen0\box\z@}}
\def\myrightarrowfill@{\arrowfill@\relbar\relbar\myrightarrow}
\newcommand{\myxrightarrow}[2][]{\ext@arrow 0359\myrightarrowfill@{#1}{#2}}
\newcommand{\isoto}{\myxrightarrow{\,\sim\,}}
\let\@wraptoccontribs\wraptoccontribs
\subjclass[2020]{14C25, 14C30, 14G12, 14J28}
\keywords{Chow groups, Hasse principle, Hodge classes, Abel-Jacobi maps, Enriques surfaces}
\title[\tiny An $\O$-acyclic variety of even index]
{An $\O$-acyclic variety of even index}
\date{\today}
\author{John Christian Ottem}
\address{Department of Mathematics, University of Oslo, Box 1053, Blindern, 0316 Oslo, Norway}
\email{johnco@math.uio.no}
\author{Fumiaki Suzuki}
\address{UCLA Mathematics Department, Box 951555, Los Angeles, CA, 90095-1555}
\email{suzuki@math.ucla.edu}
\address{Laboratoire Analyse, G\'eom\'etrie et Applications Institut Galil\'ee -- Universit\'e Sorbonne Paris Nord, 99 avenue Jean-Baptiste Cl\'ement, 93430 Villetaneuse, France}
\email{wittenberg@math.univ-paris13.fr}
\begin{document}

\begin{abstract}
We give the first examples of $\O$-acyclic smooth projective geometrically connected varieties over the function field of a complex curve, whose index is not equal to one.
More precisely, we construct a family of Enriques surfaces over $\P^{1}$ such that any multi-section has even degree over the base $\P^{1}$
and show moreover that we can find such a family defined over $\Q$.
This answers affirmatively a question of Colliot-Th\'el\`ene and Voisin.
Furthermore,
our construction provides counterexamples to: the failure of the Hasse principle accounted for by the reciprocity obstruction; the integral Hodge conjecture; and universality of Abel--Jacobi maps.
\end{abstract}
\maketitle

\section{Introduction}
In a letter to Grothendieck \cite[p. 152]{CS}, Serre asked whether a smooth projective geometrically connected variety $Y$ over the function field of a complex curve should always have a rational point if it is {\it $\O$-acyclic}, 
that is, $H^{i}(Y,\O_{Y})=0$ for all $i>0$.
This indeed holds for rationally connected varieties, as proved by Graber--Harris--Starr \cite{GHS}, generalizing a classical theorem of Tsen. However, Graber--Harris--Mazur--Starr \cite{GHMS} gave a counterexample for the general case; 
in fact, they showed that there exist Enriques surfaces with no rational points over the function field of a complex curve.
Later, more explicit constructions of such Enriques surfaces were given by Lafon \cite{L} and Starr \cite{S}.
It is remarkable that the example of Lafon is defined over $\Q(t)$ and has no rational point over the local field $\C((t))$.

In light of these examples, one might still hope that a weaker statement could be true.
We recall that the {\it index} of a proper variety $Y$ over a field $F$ is defined to be
\[
I(Y)=\gcd\left\{\deg_{F}(\alpha)\mid \alpha \in CH_{0}(Y)\right\}.
\]

One can then ask:
\begin{q}\label{q}
Does an $\O$-acyclic smooth projective geometrically connected variety $Y$ over the function field of a complex curve always have $I(Y)=1$? 
\end{q}
In other words, we ask whether Serre's question has a positive answer if we replace a rational point on $Y$ with a $0$-cycle of degree $1$.

It is important to note that there is no local obstruction here:
the Riemann--Roch theorem implies that $Y$ as in Question \ref{q} always has 
indices one everywhere locally, or equivalently,
that $Y$ gives a one-parameter family $X\rightarrow C$ of $\O$-acyclic varieties with no multiple fiber (see also \cite[Proposition 7.3]{CTV} and \cite[Theorem 1]{ELW}). 

Nevertheless, it was expected by several mathematicians that Question \ref{q} would have a negative answer 
(see \cite{S} for expectations of Esnault on the indices of the examples
of Graber--Harris--Mazur--Starr and Lafon).
In particular, Colliot-Th\'el\`ene and Voisin asked \cite[Question 7.9]{CTV} whether one can construct an $\O$-acyclic surface of index not equal to one.
The aim of this paper is to give the first counterexamples to Question \ref{q} and thereby to answer affirmatively the question raised by Colliot-Th\'el\`ene and Voisin.
Our main result is the following:

\begin{thm}[=Theorem \ref{t1}, \ref{t1Q}]\label{T1}
Let $X\subset \P^{1}\times \P^{2}\times \P^{2}$ be the rank one degeneracy locus of a map of vector bundles
\[
\O^{\oplus 3}\rightarrow \O(2,2,0)\oplus \O(2,0,2).
\]
If $X$ is very general, then the first projection gives a family $X\rightarrow \P^{1}$ of Enriques surfaces such that any multi-section has even degree over the base $\P^{1}$.
That is, the index $I(X_{\eta})$ is even, where $X_{\eta}$ is the generic fiber. Moreover, we can find threefolds with these properties defined over $\Q$.
\end{thm}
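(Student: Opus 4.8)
The plan is to establish the two assertions separately: that the fibers of $X\to\P^{1}$ are Enriques surfaces, and that the generic fiber has even index, deferring the $\Q$-rationality to the very end.

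For the Enriques fibration I would first read off the geometry of a fiber from the determinantal description. Restricting the bundle map to $\{t\}\times\P^{2}\times\P^{2}$ presents $X_{t}$ as the rank $\le 1$ locus of a map $\O^{\oplus 3}\to\O(2,0)\oplus\O(0,2)$, i.e. the common zero locus of the three $2\times 2$ minors, each a section of $\O(2,2)$; concretely $X_{t}=\{(x,y):(a_{1}(x),a_{2}(x),a_{3}(x))\parallel(b_{1}(y),b_{2}(y),b_{3}(y))\}$ is the fibre product of two degree-$4$ covers $\P^{2}\to\P^{2}$ defined by nets of conics. A Bertini argument on the parameter space of such maps shows that for very general data $X$ is smooth and each $X_{t}$ is a smooth surface of the expected dimension $2$. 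I would then resolve the determinantal ideal by the Eagon--Northcott complex and apply adjunction to compute $\omega_{X_{t}}$; the expected outcome is that $\omega_{X_{t}}$ is a nontrivial $2$-torsion line bundle with $h^{1}(\O_{X_{t}})=0$ and $\chi(\O_{X_{t}})=1$, which characterizes $X_{t}$ as an Enriques surface and simultaneously exhibits its K3 double cover. In particular the family is $\O$-acyclic, and by Riemann--Roch on the fibers it has index $1$ at every point of $\P^{1}$, so there is no local obstruction.

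For the even-index statement I would pass to the cohomology of the smooth threefold $X$. Since the class of a fiber $[X_{t}]\in H^{2}(X,\Z)$ is the pullback of the point class from $\P^{1}$, the degree over $\P^{1}$ of a multisection $Z$ equals the intersection number $[Z]\cdot[X_{t}]\in H^{6}(X,\Z)=\Z$. Thus $I(X_{\eta})=\gcd_{Z}\big([Z]\cdot[X_{t}]\big)$ over algebraic $1$-cycles $Z$, and the claim becomes: the functional $\,\cdot\,[X_{t}]$ sends the subgroup of $H^{4}(X,\Z)$ generated by algebraic classes into $2\Z$. I expect the computation of $H^{4}(X,\Z)$ to exhibit moreover an integral \emph{Hodge} class pairing to $1$ with $[X_{t}]$ (the cohomological shadow of the absence of a local obstruction noted above). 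Granting both facts at once, the index is even and equal to $2$, while the odd-pairing Hodge class cannot be algebraic; this is exactly the failure of the integral Hodge conjecture promised in the abstract.

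The heart of the argument is therefore to show that every algebraic class in $H^{4}(X,\Z)$ pairs evenly with $[X_{t}]$. Here I would use the very general hypothesis through a monodromy / Noether--Lefschetz type argument, carried out on the associated family of K3 double covers, to pin down the algebraic part of $H^{4}$: it should be spanned by the restrictions of the natural classes of $\P^{1}\times\P^{2}\times\P^{2}$ together with those forced by the fibration and the two $4:1$ covers, whose pairings with $[X_{t}]$ one computes via Thom--Porteous and checks to be even. To make this rigorous I would degenerate $X$ to an explicit special member $X_{0}\to\P^{1}$ on which both the relevant cohomology and its algebraic subgroup can be computed by hand, verify the parity there, and then transport the non-existence of odd multisections back to the very general $X$ by specialization and semicontinuity of the index. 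The main obstacle is precisely the non-vanishing of this mod-$2$ obstruction: one must produce a nonzero unramified class $\alpha\in H^{2}_{\nr}(X_{\eta},\Z/2)$ — equivalently a $2$-torsion class in $\operatorname{Br}(X_{\eta})$ arising from the canonical double cover — whose local invariants obey a reciprocity law over $\C(t)$ forcing $\deg Z$ to be even; since $\operatorname{Br}(\C(t))=0$ by Tsen, this class is genuinely non-constant, and controlling it throughout the degeneration is the delicate point. Finally, for the statement over $\Q$ I would choose the two nets of conics with rational coefficients so that smoothness and the Enriques property persist, deduce evenness from the complex very general case by specialization, and invoke the appendix for the reciprocity-obstruction interpretation of the resulting failure of the Hasse principle.
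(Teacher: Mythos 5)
Your core mechanism for evenness cannot work. The ``main obstacle'' you identify --- producing a $2$-torsion class in $\mathrm{Br}(X_{\eta})$ (equivalently a class in $H^{1}_{\et}(X_{\eta},\Z/2)$ beyond $H^2_{\nr}$ bookkeeping) whose local invariants obey a reciprocity law over $\C(t)$ forcing $\deg Z$ to be even --- is precisely the reciprocity obstruction of Section~5, and the paper proves it \emph{vanishes} here: Theorem~\ref{HPRO} exhibits local points with both evaluations $0$ and $1$ at the bad places, and Wittenberg's appendix shows the obstruction vanishes for \emph{every} smooth proper $Y$ over $\C(C)$ with $\chi(Y,\O_Y)=1$. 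So no Brauer/reciprocity argument can prove the theorem; the evenness is a genuinely cycle-theoretic phenomenon. Likewise your Noether--Lefschetz/monodromy plan to ``pin down the algebraic part of $H^{4}(X,\Z)$'' is a non-starter: by Lemma~\ref{l1} the Hodge structure on $H^{4}(X,\Z)\cong\Z^{50}$ is trivial, so every integral class is Hodge and Hodge-theoretic monodromy sees nothing; the computation of the algebraic subgroup ($Z^{4}(X)[2]=(\Z/2)^{46}$, Theorem~\ref{t2}) is a \emph{consequence} of the main theorem, not an input to it. What the paper actually proves is the congruence $\deg(\alpha/\P^{1})\equiv\sum_{k=1}^{12}\alpha\cdot E_{1,j_{k}}\ (\mathrm{mod}\ 2)$ for every $1$-cycle $\alpha$ and every $12$-tuple among the $24$ planes $E_{1,j}$; comparing $12$-tuples forces all parities $\alpha\cdot E_{1,j}$ to agree, whence $\deg(\alpha/\P^{1})\equiv 12\cdot(\text{common parity})\equiv 0$. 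Monodromy enters only to reduce to a \emph{single} $12$-tuple, via surjectivity of $\pi_{1}(U)\to S_{24}$ on the $24$ planes (a Lefschetz-pencil argument), and that single congruence is obtained by degenerating the three $(2,1)$-divisors to unions of type $(1,1)+(1,0)$, so the special fiber is $X_{0}\cup R_{1}\cup R_{2}\cup R_{3}$ with $X_{0}$ the threefold of \cite{OS}, where the $12$-plane congruence was established. Note the special fiber does \emph{not} have even index --- only the congruence survives specialization --- so your plan to ``verify the parity by hand on a special member and transport it back by semicontinuity'' has no candidate special member: evenness emerges only from the congruence \emph{combined with} full $S_{24}$-monodromy on the very general member.

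Your treatment of the statement over $\Q$ also has the specialization arrow backwards. Cycles specialize from the generic member to special ones, so cycle-theoretic constraints propagate from special to general, never from ``very general'' to a specific member; a threefold defined over $\Q$ lies in a countable set and is never very general, and the locus of members carrying odd multisections is a countable union of closed subsets that could a priori contain every $\Q$-point. The paper instead replaces topological by Galois monodromy: it constructs the pencil \eqref{eq5} over $B=\P^{1}$, checks the degree-$24$ cover $\widetilde{B}\to B$ has monodromy $S_{24}$, applies Hilbert irreducibility to find $\Q$-points $(\lambda,\mu)=(1,p)$ with $p$ prime at which $\Gal(\overline{\Q}/\Q)$ still acts as the full $S_{24}$ on the planes (Lemma~\ref{fullmonodromy}), and then runs the degeneration over a valuation ring of residue characteristic $p$, specializing to the reduction of the flat limit over $(\lambda,\mu)=(1,0)$ and again invoking the congruence of \cite{OS}. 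None of this mixed-characteristic mechanism appears in your sketch. (Your first part --- Eagon--Northcott and adjunction to identify the fibers as Enriques surfaces, and Riemann--Roch to rule out local obstructions --- does match the paper's setup; but note the paper claims only that $I(X_{\eta})$ is even, and your parenthetical ``equal to $2$'' would require exhibiting a degree-$2$ multisection, which you do not do.)
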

\begin{rem}
Our construction can be generalized to give a counterexample to Question \ref{q} when $\dim Y=2n$ for any positive integer $n$
(besides ones obtained from Theorem \ref{T1} by taking the product with a projective space); see Theorem \ref{t1'}.
\end{rem}
\begin{rem}
It would be natural to ask an analogue of Question \ref{q} over the function field of a curve over the algebraic closure of a finite field.
We will prove some conditional positive results in Proposition \ref{Tate} and Corollary \ref{Tate'}.
\end{rem}

Our construction has consequences for certain questions in number theory. 
We say that the Hasse principle holds for $0$-cycles of degree $1$ on a smooth projective geometrically connected variety $Y$ over the function field $F=\C(C)$ of a complex curve $C$ if there is a $0$-cycle of degree $1$ on $Y$ whenever there is such a cycle on $Y_{F_{p}}$ for any point $p\in C$, where $F_{p}\cong \C((t))$ is the completion of $F$ at $p$.
The reciprocity obstruction to the Hasse principle for $0$-cycles of degree $1$ on a variety over the function field of a complex curve, which is an analogue of the Brauer-Manin obstruction for rational points on a variety over a number field, was defined and pointed out to the authors by Colliot-Th\'el\`ene (see also \cite[Section 5]{CTG}).

As a consequence of our construction, we prove that the failure of the Hasse principle for $0$-cycles of degree $1$ on an Enriques surface over $\C(\P^{1})$ cannot always be accounted for by the reciprocity obstruction.

\begin{thm}[=Theorem \ref{HPRO}]
Let $X_{\eta}$ be the generic fiber of a very general family $X\rightarrow \P^{1}$ of Enriques surfaces as in Theorem \ref{T1}.
Then the Hasse principle fails for $0$-cycles of degree $1$ on $X_{\eta}$, while there is no reciprocity obstruction for $X_{\eta}$.
\end{thm}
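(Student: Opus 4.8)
The plan is to treat the two claims separately. The failure of the Hasse principle will follow at once from Theorem \ref{T1} combined with the local solubility recalled in the introduction, while the absence of a reciprocity obstruction is the substantive point and amounts to producing a family of local $0$-cycles of degree $1$ that is orthogonal to $\operatorname{Br}(X_\eta)$ under the reciprocity pairing.

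For the Hasse principle, note that the generic fibre $X_\eta$ is an Enriques surface and hence $\O$-acyclic, so by the Riemann--Roch observation recorded after Question \ref{q} it has index one at every place $p\in\P^1$; equivalently, each $X_{\eta,F_p}$ with $F_p\cong\C((t))$ carries a $0$-cycle of degree $1$. On the other hand Theorem \ref{T1} gives that $I(X_\eta)$ is even, so there is no global $0$-cycle of degree $1$. The presence of local $0$-cycles of degree $1$ at every place together with the absence of a global one is precisely the failure of the Hasse principle for $0$-cycles of degree $1$ on $X_\eta$.

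For the reciprocity obstruction I would first unwind the definition of the pairing from \cite[Section 5]{CTG}: to a class $A\in\operatorname{Br}(X_\eta)$ and an adelic family $(z_p)$ of local $0$-cycles of degree $1$ it attaches a sum $\sum_p\inv_p A(z_p)\in\Q/\Z$ made meaningful by the reciprocity law on $\P^1$, and an obstruction is present exactly when this sum is forced to be nonzero for some $A$. Since each completion $F_p\cong\C((t))$ has algebraically closed residue field we have $\operatorname{Br}(F_p)=0$; hence any nonzero local term must arise from the residue of $A$ along the fibre $X_p$ rather than from $\operatorname{Br}(F_p)$ itself. I would therefore analyse $\operatorname{Br}(X_\eta)$ through the exact sequence involving $H^1(F,\Pic(\overline{X}_\eta))$ and the geometric Brauer group $\operatorname{Br}(\overline{X}_\eta)=\Z/2$ (using $\operatorname{Br}(F)=0$, as $F=\C(\P^1)$ is $C_1$), and isolate the order-two class coming from the canonical double cover of the Enriques surface as the only source of nontrivial local invariants, these being detected by the class of $z_p$ modulo the $\Z/2=\pi_1(X_p)$ of the fibres; the remaining classes either extend over $\P^1$ and pair trivially or are killed by $\operatorname{Br}(\P^1)=0$.

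The core of the argument is then to show that the finitely many $\Z/2$-valued local constraints produced by the degenerate fibres can be satisfied simultaneously by a degree-one adelic cycle, i.e.\ that a family $(z_p)$ orthogonal to all of $\operatorname{Br}(X_\eta)$ exists. Here I would use on one hand that the local index-one statement leaves enough freedom to modify each $z_p$ within $CH_0(X_{\eta,F_p})$ so as to adjust its local invariant, and on the other that the simple connectivity of the base $\P^1$ (so that $\operatorname{Br}(\P^1)=0$) prevents the reciprocity law from pinning the global sum to a nonzero value; combining these yields the desired orthogonal family and hence the absence of a reciprocity obstruction. The main obstacle I anticipate is the explicit evaluation of these local invariants at the bad fibres of the rank-one degeneracy locus and the verification that the resulting system is consistent---this is precisely where the very general position of $X$ enters, and conceptually it expresses that the even index of $X_\eta$ is governed by a geometric (integral Hodge--theoretic) obstruction on the total space rather than by the arithmetic reciprocity pairing.
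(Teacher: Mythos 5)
Your first half is exactly the paper's argument and is fine: the fibration $X\rightarrow\P^{1}$ has no multiple fibre by $\O$-acyclicity and Riemann--Roch (the observation recorded after Question \ref{q}), so each $X_{\eta,F_p}$ carries a $0$-cycle of degree $1$, while Theorem \ref{T1} rules out a global one.

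The second half has a genuine gap, in two respects. First, the framework: over $F=\C(\P^{1})$ every completion $F_p\cong\C((t))$ has $\operatorname{Br}(F_p)=0$, so the pairing you propose, sending $(A,(z_p))$ to $\sum_p\inv_p A(z_p)$ with $A\in\operatorname{Br}(X_\eta)$ and local values in $\operatorname{Br}(F_p)$, is identically zero and detects nothing; proving it vanishes does not address the theorem. The reciprocity obstruction of Colliot-Th\'el\`ene--Gille \cite{CTG}, which is what the statement refers to, pairs adelic $0$-cycles instead with $H^{1}_{\et}(X_\eta,\Q/\Z(1))$ --- in the paper, with a lift $\xi\in H^{1}_{\et}(X_\eta,\Z/2)$ of the class of the canonical \'etale double cover --- with local values in $H^{1}(F_p,\Z/2)=\Z/2$, and these local values are genuinely nontrivial. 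You do correctly sense that the order-two covering class is the only relevant one, but the analysis via Brauer residues is the wrong mechanism here. Second, and more substantively, your claim that ``the local index-one statement leaves enough freedom to modify each $z_p$ so as to adjust its local invariant'' is precisely what must be proved, and it is not automatic: at places of good reduction the evaluation of $\xi$ is \emph{constant} on degree-one cycles (it vanishes), so any freedom lives only at the finitely many bad places and must be exhibited there --- the ``main obstacle'' you flag but do not resolve. The paper closes this by an explicit computation at each place $p_{i,j}$ lying under a plane $E_{i,j}$: the fibre there is reduced with two components $E_{i,j}$ and a residual $R_{i,j}$; Hensel's lemma yields sections $S_{1},S_{2}$ of $X\times_{\P^{1}}\Spec\widehat{\O}_{\P^{1},p_{i,j}}$ meeting $E_{i,j}$, resp.\ $R_{i,j}$, transversally; and since the double cover $Y\rightarrow X$ is branched exactly along $\bigcup E_{i,j}$, the preimage of $S_{1}$ is an irreducible degree-two multisection (local evaluation $1$) while that of $S_{2}$ splits into two sections (local evaluation $0$). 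Surjectivity of the local evaluation on degree-one cycles at even a single such place lets one flip the sum $\theta$ to $0$, producing the orthogonal adelic family. Without this computation --- or the general cohomological argument of Wittenberg's appendix, which derives the vanishing from Riemann--Roch for any smooth proper $Y$ with $\chi(Y,\O_Y)=1$ --- your outline does not yield the second assertion.
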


Question \ref{q} is also related to the integral Hodge conjecture.
We recall that the integral Hodge conjecture in degree $2i$ on a smooth complex projective variety $X$ 
is the statement that degree $2i$ integral Hodge classes on $X$ are algebraic,
i.e., the image $H^{2i}_{\alg}(X,\Z)\subseteq H^{2i}(X,\Z)$ of the cycle class map $\cl^{i}\colon CH^{i}(X)\rightarrow H^{2i}(X,\Z)$ generates the entire group $Hdg^{2i}(X,\Z)=H^{i,i}(X)\cap H^{2i}(X,\Z)$ of integral Hodge classes.
While the statement holds for $i=0,1,\dim X$, it is known to fail in general for $2\leq i\leq \dim X-1$.
The first counterexample was constructed by Atiyah--Hirzebruch \cite{AH} and many others have been found since then \cite{BCC,BO,CTV, OS, Sch, SV, T}.

As pointed out by Colliot-Th\'el\`ene and Voisin \cite[Theorem 7.6]{CTV}, a counterexample to Question \ref{q} gives 
a one-parameter family $X\rightarrow C$ of $\O$-acyclic varieties for which the integral Hodge conjecture fails in degree $2d-2$, where $d=\dim X$.
This means that the defect of the integral Hodge conjecture in degree $2d-2$, defined as
\[
Z^{2d-2}(X)=Hdg^{2d-2}(X,\Z)/H^{2d-2}_{\alg}(X,\Z),
\]
is non-zero.


It follows that the integral Hodge conjecture fails in degree 4 for the threefold $X$ in Theorem \ref{T1}, and that the defect $Z^4(X)$ is non-zero.
In the last part of the paper, we determine completely the $2$-torsion subgroup $Z^{4}(X)[2]$.
In addition, this allows us to compute explicitly the degree $3$ unramified cohomology group $H^{3}_{\nr}(X,\Z/2)$,
a stable birational invariant of smooth complex projective varieties defined in the framework of the Bloch--Ogus theory \cite{BlO}.
A key input is a theorem of Colliot-Th\'el\`ene and Voisin \cite[Theorem 3.9]{CTV} together with the fact that we have $CH_{0}(X)=\Z$ (this can be deduced from a result of Bloch--Kas--Lieberman \cite{BKL}).

\begin{thm}[=Theorem \ref{t2}, Corollary \ref{co1}]\label{T2}
Let $X$ be the total space of a very general family of Enriques surfaces as in Theorem \ref{T1}.
Then we have 
\[H^{3}_{\nr}(X,\Z/2)=Z^{4}(X)[2]=(\Z/2)^{46}.\]
\end{thm}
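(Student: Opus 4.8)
The plan is to split the statement into its two asserted equalities and to treat the explicit computation as the main content. First I would establish the identification $H^{3}_{\nr}(X,\Z/2)=Z^{4}(X)[2]$ by invoking the theorem of Colliot-Th\'el\`ene and Voisin \cite[Theorem 3.9]{CTV}: since $X$ is a smooth projective threefold with $CH_{0}(X)=\Z$ (which, as indicated in the introduction, follows from Bloch--Kas--Lieberman \cite{BKL}, every fiber and hence $X$ itself having trivial Chow group of zero-cycles), that theorem yields an isomorphism $H^{3}_{\nr}(X,\Q/\Z)\cong Z^{4}(X)$. As $CH_{0}(X)=\Z$ forces $H^{3}_{\nr}(X,\Q)=0$, the group $H^{3}_{\nr}(X,\Q/\Z)$ is torsion, so passing to $2$-primary torsion gives $H^{3}_{\nr}(X,\Z/2)\cong Z^{4}(X)[2]$ and reduces everything to computing the latter.

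The computation of $Z^{4}(X)[2]$ I would organize around the Leray spectral sequence of $f\colon X\to\P^{1}$. The key structural feature is that the Enriques fibers satisfy $h^{2,0}=0$, so on a smooth fiber $H^{2}$ is entirely of type $(1,1)$ and its free part is generated by algebraic classes; thus there is no transcendental contribution from the fibers, and the defect $Z^{4}(X)$ is forced to be a global, torsion phenomenon governed by the torsion subsheaf of $R^{2}f_{\ast}\Z$ (whose generic stalk is the $\Z/2$ spanned by the canonical class), the sheaf $R^{3}f_{\ast}\Z$ with generic stalk $\Z/2$, and the monodromy on $R^{2}f_{\ast}\Z$ together with the singular fibers. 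Concretely, I would first read off $Hdg^{4}(X,\Z)\subseteq H^{4}(X,\Z)$, splitting its free and torsion parts via the spectral sequence, and then pin down the sublattice $H^{4}_{\alg}(X,\Z)$ of algebraic classes. Here the very general hypothesis is essential: a Noether--Lefschetz-type argument should show that the only algebraic classes are the expected ones coming from the geometry of $X\subset\P^{1}\times\P^{2}\times\P^{2}$ (restrictions of hyperplane classes and their products, together with fiberwise curve classes and the canonical classes), so that the quotient defining $Z^{4}(X)$ is as large as the Hodge-theoretic bound permits.

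The final bookkeeping requires determining the degenerate fibers of the degeneracy-locus family explicitly, both their number and their type as degenerations of Enriques surfaces, since these control the stalks of the constructible sheaves above and hence the dimensions $\dim_{\Z/2}H^{p}(\P^{1},R^{q}f_{\ast}\Z/2)$; an Euler-characteristic count on $\P^{1}$ for the relevant constructible $\Z/2$-sheaf, combined with the monodromy computation, should produce the number $46$. I expect the main obstacle to be twofold. The delicate computational point is the $2$-torsion accounting across the spectral sequence: the Bockstein maps and the interaction of the torsion subsheaf of $R^{2}f_{\ast}\Z$ with $R^{3}f_{\ast}\Z$ must be controlled so that one distinguishes genuine $2$-torsion Hodge classes from those killed by the differentials. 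The conceptually hard point is non-algebraicity, namely proving that the relevant torsion Hodge classes are not algebraic for the very general $X$; I would approach this either by a specialization/degeneration argument exhibiting a nonzero unramified class, or by leveraging the even-index property of the generic fiber established in Theorem \ref{T1}, from which the nonvanishing of the defect, and ultimately its exact size, can be extracted.
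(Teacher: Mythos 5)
Your first step---the identification $H^{3}_{\nr}(X,\Z/2)=Z^{4}(X)[2]$ via \cite[Theorem 3.9]{CTV} and $CH_{0}(X)=\Z$ from Bloch--Kas--Lieberman---is exactly the paper's Corollary \ref{co1}, and is fine. The gap is in the computation of $Z^{4}(X)[2]$, and it begins with a misdiagnosis of where the defect lives. By Lemma \ref{l1}, $H^{4}(X,\Z)\cong\Z^{50}$ is torsion-free with trivial Hodge structure, so $Hdg^{4}(X,\Z)=H^{4}(X,\Z)$ in its entirety and there are no ``torsion Hodge classes'' to isolate: $Z^{4}(X)$ is the quotient of the full rank-$50$ lattice by the finite-index sublattice $H^{4}_{\alg}(X,\Z)$, and its $2$-torsion measures divisibility of algebraic classes, not torsion in $H^{4}$ or in $R^{2}f_{*}\Z$. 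For the same reason your ``Noether--Lefschetz-type argument'' cannot work even in principle: every integral degree-$4$ class is Hodge for \emph{every} member of the family, so there is no Hodge-locus constraint distinguishing the very general $X$; Hodge theory alone cannot certify non-algebraicity of integral Hodge classes here. Very generality enters only through the monodromy-plus-specialization congruences of Theorem \ref{t1}, which constrain the intersection numbers of \emph{all} algebraic $1$-cycles with the planes $E_{i,j}$.

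Concretely, the paper's proof runs as follows, and none of its substance is supplied by your outline. Torsion-freeness of $H^{*}(X,\Z)$ and the Tor sequence give $Z^{4}(X)[2]=\Ker\left(H^{4}_{\alg}(X,\Z)/2\rightarrow H^{4}(X,\Z/2)\right)$, with $H^{4}_{\alg}(X,\Z)/2=(\Z/2)^{50}$, so everything reduces to showing that the mod-$2$ algebraic classes $H^{4}_{\alg}(X,\Z/2)\subset H^{4}(X,\Z/2)$ form exactly a $(\Z/2)^{4}$. The upper bound is the heart: the congruence \eqref{c3} from the proof of Theorem \ref{t1}, together with its analogue \eqref{c4} for the second set of planes $E_{2,j}$ (which requires rerunning the monodromy argument for that family), bounds the pushforward to $H^{10}(\P^{1}\times\P_{A},\Z/2)$ by rank $2$ and the intersection $H^{4}_{\alg}\cap H^{4}_{\van}$ by rank $2$. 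The lower bound comes from explicit classes: lines $l_{1}\subset E_{1,1}$, $l_{2}\subset E_{2,1}$ and the classes $H_{1}^{2}$, $H_{2}^{2}$. Finally, identifying $H^{4}_{\van}(X,\Z)\cong\Z^{46}$ with explicit generators $c_{i,j_{1},j_{2}}$ uses the double cover $Y\rightarrow X$ of Section \ref{FES}, where $f_{*}$ on vanishing cohomology is multiplication by $2$---not a Leray/Euler-characteristic count over $\P^{1}$ (your spectral-sequence bookkeeping would at best reproduce Lemma \ref{l1}(3),(4), which the paper already has by other means). You flag ``non-algebraicity'' as a hard point and offer to leverage Theorem \ref{T1} as one of two options; that option is indeed the only one that works, but as it stands it is a pointer to the theorem rather than a derivation of the rank-$4$ bound, so the count $50-4=46$ is not established by your proposal.
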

\begin{rem}
Note that there is a $2$-torsion element in the N\'eron-Severi group of the geometric generic fiber of the family $X\rightarrow \P^{1}$. In contrast, Colliot-Th\'el\`ene and Voisin  proved that if $X\rightarrow C$  is a family of $\O$-acyclic surfaces such that the geometric generic fiber has torsion free N\'eron--Severi group, then the degree $3$ unramified cohomology group with torsion coefficients is conjecturally of rank at most one \cite[Theorem 7.7, 8.21, Remark 8.22]{CTV}.
\end{rem}

We note that Theorem \ref{T1} also has an application to universality of the Abel-Jacobi maps.
A classical question of Murre asks whether the Abel-Jacobi map is universal among all regular homomorphisms (see \cite[Section 4]{OS} and \cite[Section 1]{Su} for more precise statements).
Recently, a negative answer to the question was given by a fourfold constructed by the authors \cite{OS}.
In fact, the threefold $X$ of Theorem \ref{T1} can be used to construct another such fourfold.
We refer the reader to the papers \cite{OS} and \cite{Su} for the details of the argument.

This paper is organized as follows.
In Section \ref{FES}, we introduce certain families of Enriques surfaces parametrized by $\P^{1}$ and study their basic properties. 
In Section \ref{PMT}, we prove the main theorem over $\C$ using an explicit geometric construction.
The proof involves a combination of monodromy and specialization arguments, and a key congruence obtained previously  by the authors in \cite{OS}.
In Section \ref{DEQ}, we refine this construction to get counterexamples defined over $\Q$.
In Section \ref{FHPRO}, we discuss the failure of the Hasse principle and the reciprocity obstruction on our examples.
In Section \ref{DIHCUC}, we compute the defect of the integral Hodge conjecture in degree $4$ on the total space of the family of Enriques surfaces of the main theorem, 
and in addition, its degree $3$ unramified cohomology group with $\Z/2$ coefficients.
Finally, in the Appendix, Olivier Wittenberg proves that the vanishing of the reciprocity obstruction obtained in Theorem \ref{HPRO} is in fact a completely general phenomenon. 

\medskip 

\noindent {\bf Notation.} We work over the complex numbers in Section \ref{FES}, \ref{PMT}, \ref{FHPRO}, and \ref{DIHCUC}. In Section \ref{DEQ}, we work over $\overline{\Q}$. We use Grothendieck's notation for projective bundles: for a vector bundle $\mathcal E$, $\P(\mathcal E)$ parameterizes one-dimensional quotients of $\mathcal E$. We write $\O_{\PP(\mathcal E)}(1)$ for the relative hyperplane bundle.

We will let $\O_{\PP^r\times \PP^s}(a,b)$ and $\O_{\PP^r\times \PP^s\times \PP^t}(a,b,c)$ denote line bundles on products of projective spaces (i.e., these are $\mbox{pr}_1^*\O_{\PP^r}(a)\otimes \mbox{pr}_2^*\O_{\PP^s}(b)$ and $\mbox{pr}_1^*\O_{\PP^r}(a)\otimes \mbox{pr}_2^*\O_{\PP^s}(b)\otimes \mbox{pr}_3^*\O_{\PP^t}(c)$ respectively). Similarly, we will write $\O_{\PP^1\times \PP(\E)}(a,b)$ for the line bundle $\mbox{pr}_1^*\O_{\PP^1}(a)\otimes \mbox{pr}_2^*\O_{\PP(\E)}(b)$ on $\PP^1\times \PP(\E)$. To simplify notation we will usually drop the subscripts when the context is clear.

\begin{ack*}
We would like to thank Lawrence Ein, J\'anos Koll\'ar, J\o rgen Vold Rennemo, and Burt Totaro for interesting discussions and useful suggestions.
We wish to thank Jean-Louis Colliot-Th\'el\`ene for many helpful correspondences and for encouraging us to check whether there is any reciprocity obstruction on our examples, which led to Theorem \ref{HPRO}.
We are grateful to Olivier Wittenberg for his remarks and for kindly agreeing to write an appendix for our paper. 
Finally, we thank the referee for careful reading and valuable comments.
\end{ack*}

\section{Families of Enriques surfaces parametrized by $\P^{1}$}\label{FES}
We will fix the following notation:
\begin{itemize}[leftmargin=*]
\item $\P_{A}=\P_{\P^{2}\times \P^{2}}(\mathcal{O}(2,0)\oplus \mathcal{O}(0,2))$, $E_{1}=\P_{\P^{2}\times \P^{2}}(\mathcal{O}(2,0))$, $E_{2}=\P_{\P^{2}\times \P^{2}}(\mathcal{O}(0,2))$
\item $\P_{B}=\P_{\P^{2}\times \P^{2}}(\mathcal{O}(1,0)\oplus \mathcal{O}(0,1))$, $F_{1}=\P_{\P^{2}\times \P^{2}}(\mathcal{O}(1,0))$, $F_{2}=\P_{\P^{2}\times \P^{2}}(\mathcal{O}(0,1))$
\item $\P_{C}=\P(H^{0}(\P_{B},\mathcal{O}(1)))$, $P_{1}=\P(H^{0}(\P^{2}\times \P^{2},\mathcal{O}(1,0)))$, $P_{2}=\P(H^{0}(\P^{2}\times \P^{2},\mathcal{O}(0,1)))$.
\end{itemize}

As is explained in \cite{OS}, these spaces are related by the following geometric construction: 
$\P_C$ is a 5-dimensional projective space, and $P_1$ and $P_2$ define disjoint planes in it via the isomorphism $$H^{0}(\P_{B},\mathcal{O}(1))=H^{0}(\P^{2}\times \P^{2},\mathcal{O}(1,0))\oplus H^{0}(\P^{2}\times \P^{2},\mathcal{O}(0,1)).$$ The projective bundle $\P_B$ is then identified with the blow-up of $\P_{C}$ along the union of $P_{1}$ and $P_{2}$, and $F_1$ and $F_2$ are the corresponding exceptional divisors. Furthermore, there is an involution $\iota$ on $\P_{C}$ induced by the involution on $H^{0}(\P_{B},\mathcal{O}(1))$ 
with the ($\pm 1$)-eigenspaces $H^{0}(\P^{2}\times \P^{2},\mathcal{O}(1,0))$ and $H^{0}(\P^{2}\times \P^{2}, \mathcal{O}(0,1))$, respectively.
The involution $\iota$ lifts to an involution on the blow-up $\P_{B}$, and we have $\P_{A}=\P_{B}/\iota$.  Thus there is a double cover $\P_{B}\rightarrow \P_{A}$ over $\P^{2}\times \P^{2}$, which is ramified along $F_1\cup F_2$, and  the divisors $F_{i}$ are mapped isomorphically onto $E_{i}$ for $i=1,2$. 

The varieties $\P_A,\P_B,\P_C$ were used in \cite{OS} to give projective models of Enriques surfaces. 
In this paper, we will use them to study the threefolds $X$ in Theorem \ref{T1}; these are Enriques surface fibrations over $\P^{1}$. 
We now explain the main construction.

Let $X\subset \P^{1}\times \P^{2}\times \P^{2}$ be the rank one degeneracy locus of a general map of vector bundles
\begin{equation}\label{defX}
\O^{\oplus 3}\rightarrow \O(2,2,0)\oplus \O(2,0,2).
\end{equation}
Then $X$ is a smooth threefold and the first projection $X\rightarrow \P^{1}$ defines a family of Enriques surfaces (see \cite[Lemma 2.1]{OS}). 
There is a natural diagram
\[
\xymatrix{
\P^{1}\times \P_{B}\ar[d]\ar[r]&\P^{1}\times \P_{A}\ar[d]\\
\P^{1}\times \P_{C}&\P^{1}\times \P^{2}\times \P^{2}
}
\]
in which $\P^{1}\times \P_{A}\rightarrow \P^{1}\times \P^{2}\times \P^{2}$ is the natural projection; 
$\P^{1}\times \P_{B}\rightarrow \P^{1}\times\P_{A}$ is the quotient map by the involution $\iota$ (which acts trivially on $\P^{1}$); and $\P^{1}\times\P_{B}\rightarrow \P^{1}\times \P_{C}$ is the blow-up along the union of $\P^{1}\times P_{1}$ and $\P^{1}\times P_{2}$.

The above diagram restricts to a diagram
\[
\xymatrix{
Y\ar[d]\ar[r]&X'\ar[d]^{\simeq}\\
Y_{\min}&X
}
\]
where $X'\subset\P^{1}\times \P_{A}$, $Y\subset \P^{1}\times \P_{B}$, and $Y_{\min}\subset \P^{1}\times\P_{C}$ are respectively defined by 
a section of $\O(2,1)^{\oplus 3}$ on $\PP^1\times \P_{A}$ and $\iota$-invariant sections of $\O(2,2)^{\oplus 3}$ on $\PP^1\times \P_{B}$ and $\PP^1\times \P_{C}$ induced by the map of vector bundles defining $X$.

Note that each of the intersections $Y_{\min}\cap (\P^{1}\times P_{i})$ is a complete intersection of three divisors of type $(2,2)$ on $\PP^1\times \PP^2$; thus they consist of $24$ points $y_{i,1},\ldots, y_{i,24}$. Moreover,
 the map $Y\rightarrow Y_{\min}$ is the blow-up of $Y_{\min}$ along the 48 points $y_{i,j}$, with the corresponding exceptional divisors $F_{i,j}$ being the components of $Y\cap (\P^{1}\times F_{i})$. The double cover $Y\rightarrow X'$ is ramified exactly along the union of the $F_{i,j}$, and each $F_{i,j}$ is mapped isomorphically onto $E_{i,j}$ (the components of $X'\cap (\P^{1}\times E_{i})$).
If $X$ is general, the map $\PP^{1}\times\PP_A\to \PP^1\times \PP^2\times \PP^2$ restricts to an isomorphism $X'\rightarrow X$.

\begin{rem}
The minimal model $X_{\min}$ of $X$ can be obtained by contracting the projective planes $E_{i,j}$ to points;
$X_{\min}$ is singular exactly at the images of $E_{i,j}$, and at each of the singular points the tangent cone is the affine cone over a Veronese surface.
\end{rem}

\begin{lem}\label{l1} The threefold $X$ has the following properties:
\begin{enumerate}
\item The degree homomorphism $\deg \colon CH_{0}(X)\rightarrow \Z$ is an isomorphism.
\item The canonical divisor of $X$ is of the form
\[
K_{X}=4F+\frac{1}{2}\sum_{i=1}^{2}\sum_{j=1}^{24} E_{i,j},
\]
where $F$ is the class of a fiber of the projection $X\rightarrow \P^{1}$.
Thus $X$ has Kodaira dimension $\kappa(X)=1$.
\item The topological Euler characteristic equals $\chi_{\Top}(X)=-96$ and Hodge diamond is given by
\[
\xymatrixrowsep{0.01in}
\xymatrixcolsep{0.01in}
\xymatrix{
&&&1&&&\\
&&0&&0&&\\
&0&&50&&0&\\
0&&99&&99&&0\\
&0&&50&&0&\\
&&0&&0&&\\
&&&1&&&
}
\]
\item $X$ is simply connected and the cohomology groups $H^{i}(X,\Z)$ are torsion-free for all $i$.
\end{enumerate}
\end{lem}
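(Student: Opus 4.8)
The plan is to push all four computations onto the smooth model $Y_{\min}\subset\P^1\times\P_C=\P^1\times\P^5$, a complete intersection of three divisors of class $\O(2,2)$, and then transport the answers through the blow-up $\nu\colon Y\to Y_{\min}$ at the $48$ points, the double cover $\rho\colon Y\to X'\cong X$, and the involution $\iota$ with $X=Y/\iota$. The structural facts I will use are that $Y_{\min}$ is cut out by an ample bundle inside a simply connected ambient space, that $\nu$ blows up $48$ reduced points, and that the fixed locus $Y^{\iota}$ is exactly the $48$ disjoint planes $F_{i,j}\cong\P^2$ (with $\iota$ acting by $-1$ on the tangent space at each blown-up point, hence trivially on the resulting exceptional $\P^2$).

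For (1) I would invoke the theorem of Bloch--Kas--Lieberman that a smooth Enriques surface has $CH_0=\Z$ and combine it with the fibration $X\to\P^1$: a $0$-cycle of degree $0$ maps to a principal divisor on $\P^1$, so up to rational equivalence it is supported on fibers, and each Enriques fiber carries no nontrivial $0$-cycle of degree $0$; thus $\deg\colon CH_0(X)\to\Z$ is injective, and surjectivity is clear. For (2) I would apply adjunction to the complete intersection $X'\subset\P^1\times\P_A$ cut out by $\O(2,1)^{\oplus 3}$: from $K_{X'}=(K_{\P^1\times\P_A}\otimes\O(6,3))|_{X'}$, the projective bundle formula gives $K_{\P_A}=\O_{\P_A}(-2)\otimes\pi^{*}\O(-1,-1)$, and a computation of $[E_i]$ in $\Pic\P_A$ identifies $(\O_{\P_A}(1)\otimes\pi^{*}\O(-1,-1))|_{X'}$ with $\tfrac12\sum_{i,j}E_{i,j}$. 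Collecting terms yields $K_X$ as a multiple of the fiber class $F$ plus $\tfrac12\sum_{i,j}E_{i,j}$; since $F$ governs the Enriques fibration while the $E_{i,j}$ are rigid, this gives $\kappa(X)=1$.

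For (3) I would first compute $\chi_{\Top}(Y_{\min})=\int_{Y_{\min}}c_3(T_{Y_{\min}})=-432$ from the normal bundle sequence of the complete intersection, so that $\chi_{\Top}(Y)=-432+96=-336$ and, by $\chi_{\Top}(Y/\iota)=\tfrac12(\chi_{\Top}(Y)+\chi_{\Top}(Y^{\iota}))$ with $\chi_{\Top}(Y^{\iota})=48\cdot3=144$, the value $\chi_{\Top}(X)=-96$. For the Hodge numbers I would use $H^{p,q}(X)=H^{p,q}(Y)^{\iota}$. The Lefschetz theorem on the ample complete intersection $Y_{\min}$ gives $h^{1,1}(Y_{\min})=h^{2,2}(Y_{\min})=2$ and $h^{1,0}=h^{2,0}=0$; after blowing up, the $(1,1)$ and $(2,2)$ numbers become $50$, and since $\iota$ fixes each $F_{i,j}$ and preserves the two ambient divisor classes, all of $H^{1,1}$ and $H^{2,2}$ is $\iota$-invariant, giving $h^{1,1}(X)=h^{2,2}(X)=50$. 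The vanishing $p_g(X)=0$ follows because the holomorphic $3$-form of $Y_{\min}$, written as a Poincaré residue, transforms under $\iota$ by the determinant of its action on $H^0(\P_B,\O(1))$, which is $-1$ as the two eigenspaces are $3$-dimensional, so it is $\iota$-anti-invariant; combined with $q=0$ this pins down the diamond once $h^{2,1}(X)=99$ is read off from $\chi_{\Top}(X)=-96$.

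Finally, for (4), simple connectivity follows from Armstrong's theorem: $\pi_1(Y)=1$ because $Y$ is the blow-up of the simply connected ample complete intersection $Y_{\min}$, and since $\iota$ has fixed points the quotient $X=Y/\iota$ is again simply connected. Consequently $H^2(X,\Z)$ is torsion-free, and by Poincaré duality and the universal coefficient theorem all torsion of $H^{\ast}(X,\Z)$ reduces to $\operatorname{Tors}H^3(X,\Z)\cong\operatorname{Tors}H_2(X,\Z)$. Proving this last group vanishes is the main obstacle, since each Enriques fiber contributes $2$-torsion to $H^2$ that must not survive globally. I would first check that $H^{\ast}(Y,\Z)$ is torsion-free (Lefschetz and Poincaré duality for $Y_{\min}$, preserved under the blow-up), and then, because the only torsion a double cover can create is $2$-primary, compare mod-$2$ and rational Betti numbers of $X$ by Smith theory for the $\Z/2$-action, establishing $\sum_i\dim_{\F_2}H^i(X,\F_2)=\sum_i\dim_{\Q}H^i(X,\Q)$ and hence torsion-freeness of $H^{\ast}(X,\Z)$.
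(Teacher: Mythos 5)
Your overall route---pushing everything onto $Y_{\min}$ and transporting through the blow-up, the double cover and the involution $\iota$---is precisely the skeleton of the proof the paper invokes (its proof of the lemma is a citation to the analogous Lemma 2.4--2.7 of \cite{OS}), and your parts (2) and (3) are essentially sound: the bookkeeping $\chi_{\Top}(Y_{\min})=-432$, $\chi_{\Top}(Y)=-336$, $\chi_{\Top}(X)=\tfrac12(-336+144)=-96$ is correct, as are $H^{p,q}(X)=H^{p,q}(Y)^{\iota}$, the triviality of $\iota$ on each exceptional plane (the differential at each of the $48$ isolated fixed points is $-\mathrm{id}$), the anti-invariance of the residue $3$-form, and the Armstrong argument for $\pi_1(X)=1$. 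One caveat on (2): you leave the coefficient of $F$ undetermined (``a multiple of the fiber class''), so as written you have not derived the stated formula; carrying your adjunction to the end gives $K_{X'}=\bigl(\O_{\P^1}(4)\otimes\O_{\P_A}(1)\otimes\pi^{*}\O(-1,-1)\bigr)|_{X'}$ with $\O_{\P_A}(1)\otimes\pi^{*}\O(-1,-1)=\tfrac12(E_1+E_2)$, i.e.\ $K_X=4F+\tfrac12\sum_{i,j}E_{i,j}$, which is forced in any case by $K_{Y_{\min}}=4F$ for a $(2,2)^{3}$ complete intersection in $\P^1\times\P^5$ together with $K_Y=\rho^{*}K_X+\sum F_{i,j}$ and $\rho^{*}E_{i,j}=2F_{i,j}$. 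The coefficient $1$ printed in the lemma is the one from the $\O(1,2,0)\oplus\O(1,0,2)$ case of \cite{OS}; the discrepancy affects neither $\kappa(X)=1$ nor anything used later, but a complete write-up should address it.

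Part (1) contains a genuine gap. From the fact that $f_{*}\alpha$ is a principal divisor on $\P^1$ you cannot conclude that $\alpha$ is rationally equivalent to a cycle of fiberwise degree zero: to lift $\operatorname{div}(g)$ from $\P^1$ one would subtract something like $\operatorname{div}(f^{*}g)\cdot\Gamma$ for a $1$-section $\Gamma$, and the main theorem of this very paper says no odd multisection exists---the generic fiber has index $2$---so this lifting step fails exactly here, and as stated your injectivity argument is circular. The theorem of Bloch--Kas--Lieberman only identifies points lying in one and the same smooth fiber; the crux is comparing $[p_t]$ and $[p_{t'}]$ for distinct fibers $t\neq t'$. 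A correct route, in the spirit of the cited \cite[Lemma 2.4]{OS}, is: move any $0$-cycle into smooth fibers (any point lies on a smooth curve on which very ample divisors can be chosen avoiding the singular fibers); by \cite{BKL} the group $CH_0(X)$ is then generated by classes $[p_t]$, one per smooth fiber; an irreducible multisection $C$ of degree $d$ gives $d([p_t]-[p_{t'}])=[C\cap X_t]-[C\cap X_{t'}]=0$, since these are cut on $C$ by linearly equivalent divisors of $\P^1$; finally Roitman's theorem together with $\Alb(X)=0$ (as $b_1(X)=0$) shows $CH_0(X)$ is torsion-free, whence $[p_t]=[p_{t'}]$.

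In (4), your final step is a plan rather than a proof. Torsion-freeness of $H^{*}(Y,\Z)$ and the reduction of all torsion on $X$ to $2$-primary torsion in $H^3(X,\Z)\cong\operatorname{Tors}H_2(X,\Z)$ are fine, but the asserted equality $\sum_i\dim_{\F_2}H^i(X,\F_2)=\sum_i\dim_{\Q}H^i(X,\Q)$ is not delivered by Smith theory as a pure counting statement: a double cover can create $2$-torsion even when the covering space has none (the quotient K3 $\to$ Enriques is the basic example), and the presence of fixed points does not by itself exclude this. Indeed a genuine $2$-torsion class is present on the open part: the proof of Lemma \ref{l2} exhibits $H^2(X^{\circ},\Z)\supset\Z/2$ generated by $[H_1]-[H_2]$, and the whole content of torsion-freeness is that this class is absorbed, in the closed variety $X$, by the classes of the planes $E_{i,j}$ via the cohomology-with-supports sequence. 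The argument the paper defers to (\cite{OS}) establishes this by the explicit Cartan--Leray computation for the free action on $Y^{\circ}$ combined with the supports sequences, exactly as in the proof of Lemma \ref{l2}, carried out in degree $3$; your Smith-theoretic count would have to reproduce this cancellation, and as written that is the missing step.
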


\begin{proof}
The arguments are entirely analogous to those in \cite[Section 2]{OS},
where the case of the rank one degeneracy locus of a general map of vector bundles
\[
\O^{\oplus 3}\rightarrow \O(1,2,0)\oplus \O(1,0,2)
\]
is considered.
The properties (1) to (4) correspond to the statements of \cite[Lemma 2.4, 2.5, 2.6, 2.7]{OS} respectively.
We note that the property (1) can be deduced from a result of Bloch--Kas--Lieberman \cite{BKL} on the Chow group of $0$-cycles on an Enriques surface.
\end{proof}

We will also need the following:

\begin{lem}\label{l2}
Let $X$ be the threefold defined by \eqref{defX}.
Then we have
\[
H^{2}(X,\Z)=\frac{\Z[F]\oplus\Z[H_{1}]\oplus\Z[H_{2}]\oplus \bigoplus_{i=1}^{2}\bigoplus_{j=1}^{24}\Z[E_{i,j}]}{\langle-2[H_{1}]+\sum_{j=1}^{24}[E_{1,j}]=-2[H_{2}]+\sum_{j=1}^{24}[E_{2,j}]\rangle)},
\]
where $F$ is the class of a fiber of the first projection $X\rightarrow \P^{1}$ 
and $H_{1}$ (resp. $H_{2}$) is the pullback of the class of a line in $\P^{2}$ via the composition $X\rightarrow \P^{2}\times\P^{2}\xrightarrow{pr_{1}}\P^{2}$ (resp. $X\rightarrow \P^{2}\times\P^{2}\xrightarrow{pr_{2}}\P^{2}$).
\end{lem}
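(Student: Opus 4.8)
The plan is to leverage the parts of Lemma~\ref{l1} already available: $H^2(X,\Z)$ is torsion-free of rank $50$, and since $h^{2,0}(X)=0$ the Lefschetz $(1,1)$-theorem identifies it with $\NS(X)$, so it is generated by divisor classes. First I would check that the displayed relation holds in $\NS(X)$. Writing $\xi$ for the restriction to $X\cong X'$ of the relative hyperplane class $\O_{\P_A}(1)$, the two sections $E_1=\P(\O(2,0))$ and $E_2=\P(\O(0,2))$ of the $\P^1$-bundle $\P_A\to\P^2\times\P^2$ have classes $\xi-2H_2$ and $\xi-2H_1$ after restriction, up to a common multiple of the fiber $F$; intersecting with $X'$ produces $\sum_j E_{1,j}$ and $\sum_j E_{2,j}$, and subtracting the two expressions cancels both $\xi$ and the fiber correction, leaving exactly $-2H_1+\sum_j E_{1,j}=-2H_2+\sum_j E_{2,j}$. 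As this relation vector is primitive (its coefficients on the $E_{i,j}$ are $\pm1$), the quotient group $Q$ on the right-hand side is free of rank $50$, and the tautological map $c\colon Q\to H^2(X,\Z)$ is a homomorphism between two copies of $\Z^{50}$. It therefore suffices to prove that $c$ is surjective, i.e. that $F,H_1,H_2$ and the $E_{i,j}$ generate $H^2(X,\Z)$.

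To establish generation I would pass to the K3 double cover $p\colon Y\to X$ of the construction, where the cohomology is transparent. Here $Y$ is the blow-up of $Y_{\min}\subset\P^1\times\P_C=\P^1\times\P^5$ at the $48$ points $y_{i,j}$, and $Y_{\min}$ is a complete intersection of three divisors of the ample class $\O(2,2)$; the Lefschetz hyperplane theorem gives $H^2(Y_{\min},\Z)=\Z f\oplus\Z h$ (fiber class and $\P^5$-hyperplane), and the blow-up formula yields a free module $H^2(Y,\Z)=\Z f\oplus\Z h\oplus\bigoplus_{i,j}\Z F_{i,j}$ of rank $50$. The deck involution $\sigma$ fixes each exceptional divisor $F_{i,j}$ pointwise and acts trivially on $H^2(Y_{\min},\Z)$, so $\sigma^{*}=\mathrm{id}$ on $H^2(Y,\Z)$; in particular $p^{*}$ is injective with $2H^2(Y,\Z)\subseteq\mathrm{im}(p^{*})$, and the task reduces to pinning down $\mathrm{im}(p^{*})$ inside the explicit lattice $H^2(Y,\Z)$.

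On the $Y$-side I would first express the pullbacks of the candidate generators in the basis $\{f,h,F_{i,j}\}$: one has $p^{*}F=f$ and $p^{*}E_{i,j}=2F_{i,j}$ (ramification), while a short computation with the blow-up $\P_B=\mathrm{Bl}_{P_1\sqcup P_2}\P_C$ expresses each $p^{*}H_i$ in terms of $h$ and the sums $\sum_j F_{i,j}$. A Smith-normal-form computation then shows that the sublattice $p^{*}\Lambda$ spanned by these pullbacks has index $2^{47}$ in $H^2(Y,\Z)$. The crux, and the step I expect to be the main obstacle, is to compute the index $[H^2(Y,\Z):\mathrm{im}(p^{*})]$ \emph{independently}, since for a ramified double cover $p^{*}$ need not surject onto the $\sigma$-invariants, and the discrepancy is a genuine $\F_2$-vector space that cannot be detected from the candidate generators alone.

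I would resolve this by a descent/linearization argument. A $\sigma$-invariant class $M\in H^2(Y,\Z)=\NS(Y)$ lies in $\mathrm{im}(p^{*})$ exactly when it descends to $X$, and since $\sigma$ fixes each $F_{i,j}$ pointwise, descent is governed by the signs $\epsilon_{i,j}(M)\in\{\pm1\}$ by which $\sigma$ acts on $M|_{F_{i,j}}$: the class $M$ descends if and only if these $48$ signs are all equal (the overall choice of linearization flips them simultaneously). Because $\sigma$ acts by $-1$ on the normal bundle $N_{F_{i,j}/Y}=\O_{\P^2}(-1)$, one finds $\epsilon_{i,j}(\O(F_{i,j}))=-1$ while $\epsilon_{i,j}(\O(F_{k,l}))=+1$ for $(k,l)\neq(i,j)$, so the resulting sign map $H^2(Y,\Z)\to(\Z/2)^{48}$ is surjective and $\mathrm{im}(p^{*})$ is precisely the preimage of the diagonal; hence $[H^2(Y,\Z):\mathrm{im}(p^{*})]=2^{47}$ as well. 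Since $p^{*}$ is injective, $[H^2(X,\Z):\Lambda]=[H^2(Y,\Z):p^{*}\Lambda]/[H^2(Y,\Z):\mathrm{im}(p^{*})]=2^{47}/2^{47}=1$, so $c$ is surjective and therefore an isomorphism, which is the assertion of the lemma.
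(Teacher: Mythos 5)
Your proof is correct, and it takes a genuinely different route from the paper's. The paper argues topologically: setting $X^{\circ}=X\setminus\bigcup E_{i,j}$ and $Y^{\circ}=Y\setminus\bigcup F_{i,j}$, it combines the long exact sequences of cohomology with supports with the Cartan--Leray spectral sequence for the \emph{free} $\Z/2$-action on $Y^{\circ}$, obtaining $0\to\Z/2\to H^{2}(X^{\circ},\Z)\to H^{2}(Y^{\circ},\Z)^{\iota}\to 0$ and then assembling $H^{2}(X,\Z)$ from the extension $0\to\bigoplus\Z[E_{i,j}]\to H^{2}(X,\Z)\to H^{2}(X^{\circ},\Z)\to 0$; in particular it does not presuppose Lemma \ref{l1}, whereas you consume its conclusions (rank $50$, torsion-freeness, $h^{2,0}=0$, hence $\Pic(X)=H^{2}(X,\Z)$ via Lefschetz $(1,1)$) to reduce the statement to a finite-index lattice comparison. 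Where the paper's mechanism for the key $2$-divisibility phenomenon is the $\Z/2$ coming from group cohomology of the free action, yours is Kempf-type descent of $\sigma$-linearized line bundles along the ramified quotient $Y\to X$: since the fixed locus is the disjoint union of the $F_{i,j}$ and $\sigma$ acts by $-1$ on each normal bundle, the sign map $H^{2}(Y,\Z)\to(\Z/2)^{48}$ modulo the diagonal (well defined precisely because switching the linearization flips all $48$ signs at once) is surjective with kernel $\Image(p^{*})$, and your two index computations ($2^{47}$ each) then force $\Lambda=H^{2}(X,\Z)$, the surjection $\Z^{50}\to\Z^{50}$ being automatically an isomorphism. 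This checks out, and your derivation of the relation from $[E_{1}]=\xi-2H_{2}$, $[E_{2}]=\xi-2H_{1}$ on $\P_{A}$ is a nice explicit substitute for the equality that the paper asserts without derivation. What each approach buys: the paper's proof is self-contained (no Lefschetz $(1,1)$, no prior knowledge of $b_{2}$) and exhibits the extension structure of $H^{2}(X,\Z)$ directly; yours, at the cost of importing Lemma \ref{l1} and the descent lemma, pins down $\Image(p^{*})\subset H^{2}(Y,\Z)$ exactly, which is extra structural information about which classes are pullbacks. Two cosmetic points: $Y$ is a threefold (a family of K3 covers of the fibers) and $Y\to X$ is ramified, so ``the K3 double cover'' is a misnomer; and you should add one line on why every class in $H^{2}(Y,\Z)$ is algebraic (immediate here, since $f$, $h$ and the $F_{i,j}$ are divisor classes), which your identification $H^{2}(Y,\Z)=\NS(Y)$ silently uses.
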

\begin{proof}
Let $X^{\circ}=X \setminus \bigcup_{i=1}^{2}\bigcup_{j=1}^{24}E_{i,j}$.
The long exact sequence for cohomology with supports yields
\begin{equation}\label{extension}
0\rightarrow \bigoplus_{i=1}^{2}\bigoplus_{j=1}^{24}\Z[E_{i,j}]\rightarrow H^{2}(X,\Z)\rightarrow H^{2}(X^{\circ},\Z)\rightarrow 0.
\end{equation}
Let $Y^{\circ}=Y\setminus \bigcup_{i=1}^{2}\bigcup_{j=1}^{24}F_{i,j}$.
Since $X^{\circ}$ is the quotient of $Y^{\circ}$ by the group $\langle\iota\rangle=\Z/2$, which acts freely, we can apply the Cartan--Leray spectral sequence
\[
E^{p,	q}_{2}=H^{p}(\Z/2, H^{q}(Y^{\circ},\Z))\Rightarrow H^{p+q}(X^{\circ},\Z).
\]
We have $H^{1}(Y^{\circ},\Z)=H^{1}(Y_{\min},\Z)=0$ by the Lefschetz hyperplane section theorem, so we have a short exact sequence
\[
0\rightarrow \Z/2\rightarrow H^{2}(X^{\circ},\Z)\rightarrow H^{2}(Y^{\circ},\Z)^{\iota}\rightarrow 0.
\]
The long exact sequence for cohomology with supports yields
\[
0\rightarrow \bigoplus_{i=1}^{2}\bigoplus_{j=1}^{24}\Z[F_{i,j}]\rightarrow H^{2}(Y,\Z)\rightarrow H^{2}(Y^{\circ},\Z)\rightarrow 0.
\]
Applying the Lefschetz hyperplane section theorem to $Y_{\min}$,
it is straightforward to compute
\[
H^{2}(Y,\Z)=\frac{\Z[F]\oplus\Z[H_{1}]\oplus\Z[H_{2}]\oplus \bigoplus_{i=1}^{2}\bigoplus_{j=1}^{24}\Z[F_{i,j}]}{\langle-[H_{1}]+\sum_{j=1}^{24}[F_{1,j}]=-[H_{2}]+\sum_{j=1}^{24}[F_{2,j}]\rangle}.
\]
Thus we obtain
\[
H^{2}(Y^{\circ},\Z)=\frac{\Z[F]\oplus\Z[H_{1}]\oplus \Z[H_{2}]}{\langle[H_{1}]=[H_{2}]\rangle}
\]
and $H^{2}(Y^{\circ},\Z)$ is $\iota$-invariant.
This, combined with the equality
\[-2[H_{1}]+\sum_{j=1}^{24}[E_{1,j}]=-2[H_{2}]+\sum_{j=1}^{24}[E_{2,j}]\]
 in $H^{2}(X,\Z)$, implies that
\[
H^{2}(X^{\circ},\Z)=\frac{\Z[F]\oplus \Z[H_{1}]\oplus\Z[H_{2}]}{\langle2[H_{1}]=2[H_{2}]\rangle},
\]
and the claim follows immediately from \eqref{extension}.
\end{proof}

\section{Proof of the main theorem}\label{PMT}
We will now prove Theorem \ref{T1} over the complex numbers. 

\begin{thm}\label{t1}
Let $X\subset \P^{1}\times \P^{2}\times \P^{2}$ be the rank one degeneracy locus of a very general map of vector bundles
\[
\O^{\oplus 3}\rightarrow \O(2,2,0)\oplus \O(2,0,2).
\]Then the first projection gives a family $X\rightarrow \P^{1}$ of Enriques surfaces such that any multi-section has even degree over the base $\P^{1}$.
That is, the index $I(X_{\eta})$ is even, where $X_{\eta}$ is the generic fiber.
\end{thm}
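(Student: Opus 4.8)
The plan is to show that the index $I(X_\eta)$ is even by proving that every multi-section of $X\to\P^1$ meets the generic Enriques fiber in a $0$-cycle of even degree. Since $\deg\colon CH_0(X)\to\Z$ is an isomorphism (Lemma \ref{l1}(1)), a multi-section $\Gamma$ is determined cohomologically by its class $[\Gamma]\in H_2(X,\Z)$, and its degree over $\P^1$ equals the intersection number $[\Gamma]\cdot F$, where $F$ is the fiber class. So the statement $I(X_\eta)$ is even amounts to showing that $\alpha\cdot F$ is even for every \emph{algebraic} class $\alpha\in H_2(X,\Z)$, i.e.\ for every class in the image of the cycle map on curves. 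The key point is therefore to pin down, via the explicit description of $H^2(X,\Z)$ in Lemma \ref{l2}, which integral homology classes on $X$ can be algebraic, and to read off the parity of their intersection with $F$.

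\textbf{First} I would translate everything into the basis given by Lemma \ref{l2}. Dualizing, a curve class $\alpha$ pairs with the divisor classes $F,H_1,H_2,E_{i,j}$, and the fibration structure forces $\alpha\cdot H_i$ and $\alpha\cdot E_{i,j}$ to satisfy the relation coming from $-2[H_1]+\sum_j[E_{1,j}]=-2[H_2]+\sum_j[E_{2,j}]$. The degree of $\alpha$ over $\P^1$ is $\alpha\cdot F$, and I want to show this is forced to be even for algebraic $\alpha$. The natural mechanism is a \emph{congruence}: restrict $\alpha$ to the generic fiber (an Enriques surface $S$), where there is a $2$-torsion class in $NS(S)$ (the difference $K_S$, or rather the class distinguishing the Enriques surface from its K3 double cover). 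Intersecting an algebraic $1$-cycle against this $2$-torsion class and using that the double cover $Y\to X'$ is ramified along $\bigcup F_{i,j}$ should produce a relation modulo $2$ between $\alpha\cdot F$ and quantities that are manifestly even, forcing $\alpha\cdot F\equiv 0\pmod 2$.

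\textbf{The main obstacle}, and the reason the hypothesis \emph{very general} is essential, is that the purely cohomological/Hodge-theoretic constraint above only restricts classes that are algebraic on \emph{every} member of the family or, better, on the very general member. For a special $X$ one could imagine extra algebraic classes (e.g.\ odd multi-sections appearing in special position) that violate the parity. The rigorous way to rule this out is a combination of a \emph{monodromy argument} and a \emph{specialization/degeneration argument}: I would use the monodromy action of the family on $H^2$ of the fibers to constrain which classes remain Hodge (hence potentially algebraic) on the very general fiber, and then specialize to a carefully chosen member where the relevant intersection numbers can be computed, invoking the key congruence established by the authors in \cite{OS}. Concretely, the congruence from \cite{OS} — which controls the parity of the degree of a multi-section against the structure coming from the double cover $\P_B\to\P_A$ and the $48$ points $y_{i,j}$ — should supply exactly the mod $2$ identity needed, while monodromy rules out the existence of any auxiliary algebraic class on the very general $X$ that could absorb the odd part.

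\textbf{Finally} I would assemble these pieces: specialization shows the parity statement holds for at least one member with the relevant Néron–Severi data, monodromy propagates the constraint to the very general member, and the congruence of \cite{OS} converts the constraint into the divisibility $2\mid \alpha\cdot F$. Since $\alpha\cdot F$ is precisely the degree over $\P^1$ of the corresponding multi-section, and since $CH_0(X)\cong\Z$ guarantees there are no exotic $0$-cycles beyond those detected by $[\Gamma]\cdot F$, this yields $I(X_\eta)\in 2\Z$. Exhibiting an actual multi-section of even degree (e.g.\ a bisection, coming for instance from the double-cover structure) then shows $I(X_\eta)=2$, completing the proof.
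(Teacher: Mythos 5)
You have correctly identified the paper's toolbox --- monodromy, specialization, and the key congruence from \cite{OS} --- but the proposal is missing the single combinatorial idea that makes these pieces interlock, and it misdescribes how two of the tools actually function. The heart of the paper's proof is the congruence, for every $1$-cycle $\alpha$ on $X$ and \emph{every} $12$-element subset $\{j_1<\cdots<j_{12}\}\subset\{1,\dots,24\}$,
\[
\deg(\alpha/\P^1)\equiv\sum_{k=1}^{12}\alpha\cdot E_{1,j_k} \pmod 2,
\]
where $E_{1,1},\dots,E_{1,24}$ are the $24$ vertical planes; comparing two such $12$-tuples gives $\alpha\cdot E_{1,1}\equiv\cdots\equiv\alpha\cdot E_{1,24}\pmod 2$, and plugging back in yields $\deg(\alpha/\P^1)\equiv 0\pmod 2$. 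Nothing in your write-up formulates this statement, and without it the remaining ingredients have nothing to act on. Moreover, the monodromy in the paper is \emph{not} used to ``constrain which classes remain Hodge'': by Lemma \ref{l1}(3) all of $H^4(X,\Z)$ is of type $(2,2)$, so Hodge-theoretic constraints rule out nothing --- the failure of the integral Hodge conjecture is precisely the phenomenon being exhibited. Instead, the paper proves that the monodromy representation $\pi_1(U)\to S_{24}$ on the set of $24$ planes is \emph{surjective} (via a Lefschetz pencil argument), and uses it in the positive direction: for very general $X$, a loop $g$ carries algebraic cycles to algebraic cycles, so the congruence for one fixed $12$-tuple propagates to all $12$-tuples. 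Finally, the specialization is not generic: one degenerates each $(2,1)$-divisor $D_i$ to $D_i'+D_i''$ with $D_i'\in|\O(1,1)|$, $D_i''\in|\O(1,0)|$, so the central fiber is $X_0\cup R_1\cup R_2\cup R_3$ with $X_0$ exactly the threefold of \cite{OS} and the $24$ planes specializing as $12+4+4+4$; Fulton's specialization map for Chow groups then imports the congruence of \cite[Theorem 3.1]{OS} for precisely one $12$-tuple, which is what the monodromy step requires.

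Two of your proposed mechanisms would fail as stated. Pairing a curve class with the $2$-torsion class in $\NS$ of the Enriques fiber gives zero, since torsion dies under the integral intersection pairing; the mod $2$ information does not come from any such formal pairing but from the explicit geometry of the special threefold in \cite{OS}. And the appeal to Lemma \ref{l1}(1) ($CH_0(X)\cong\Z$) is a red herring: degrees over $\P^1$ and the intersection numbers $\alpha\cdot E_{1,j}$ are already homological, and the reduction from $0$-cycles on $X_\eta$ to $1$-cycles on $X$ is simply taking Zariski closures --- the paper uses $CH_0(X)=\Z$ only later, for the unramified cohomology computation. Your closing claim that exhibiting a bisection ``shows $I(X_\eta)=2$'' also overreaches: the theorem asserts only that the index is even, and no multi-section of degree $2$ is produced (or needed) in the paper.
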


\begin{proof}

The first goal will be to prove that for any $1$-cycle $\alpha$ on $X$ and for any $12$-tuple of integers $1\leq j_{1}<\cdots<j_{12}\leq 24$, there is a congruence
\begin{eqnarray}\label{c2}
\deg(\alpha/\P^{1})\equiv \sum_{k=1}^{12}\alpha\cdot E_{1,j_{k}} \mod 2.
\end{eqnarray}
These congruences will imply the theorem. Indeed, from (\ref{c2}) we obtain
\begin{eqnarray}\label{c3}
\alpha\cdot E_{1,1}\equiv \cdots \equiv \alpha\cdot E_{1,24} \mod 2,
\end{eqnarray}
which in turn implies that $\deg(\alpha/\P^{1})$ is even.

To prove the congruence (\ref{c2}), we combine monodromy and specialization arguments.
First, we prove that 
a certain monodromy group acts on the set of $24$ planes $E_{1,1},\ldots, E_{1,24}$ by permutations, and every permutation of the $E_{1,j}$ is realized by this action. 
This will allow us to reduce to proving \eqref{c2} for a fixed 12-tuple $1\le j_1<\cdots<j_{12}\le 24$.

Consider the universal family $$\X\rightarrow G=\Gr(3,H^{0}(\P^{1}\times \P_{A},\O(2,1)))$$ of complete intersections in $\P^{1}\times \P_{A}$ of three divisors of type $(2,1)$. 
Let $\E_{1}$ denote the pullback of the Cartier divisor $E_1$ via the projection map $\X\to \PP_A$. The corresponding family $\E_1\to G$ is the union of the planes $E_{1,1},\ldots,E_{1,24}$ in the fibers of $\X\to G$. Let $\widetilde{G}\rightarrow G$ be the Stein factorization of $\mathcal{E}_{1}\rightarrow G$, which is a finite morphism of degree $24$,
and let $U\subset G$ be the largest open set over which $\widetilde{G}\rightarrow G$ is \'etale. 
We will now prove the following:
\begin{lem}
The monodromy representation
\[
\rho\colon \pi_{1}(U)\rightarrow S_{24},
\]
uniquely determined up to the choice of a base point, is surjective.
\end{lem}
\begin{proof}
Recall from Section \ref{FES} that the planes $E_{1,1},\ldots,E_{1,24}$ are parameterized by the 24 intersection points of three divisors of type $(2,2)$ in $\PP^1\times \PP^2$.  To prove the lemma, we restrict over a certain line $l$ on $G$ defined as follows. Let $\widetilde{l} \subset \P^{1}\times \P^{2}$ be a general complete intersection of two divisors of type $(2,2)$.
Taking a general pencil in $|\O_{\widetilde{l}}(2,2)|$, we obtain a Lefschetz pencil $\widetilde{l}\rightarrow \P^{1}$ by \cite[Theorem XVII. 2.5]{DK}.
This defines a line 
\[l\subset \Gr(3,H^{0}(\P^{1}\times \P^{2}\times \P^{2},\O(2,2,0))) \subset G\]
 such that $\widetilde{l}=l\times_{G}\widetilde{G}$,
 where the inclusion between the Grassmannians is via the identification
 \[
 H^{0}(\P^{1}\times\P_{A},\mathcal{O}(2,1))=H^{0}(\P^{1}\times\P^{2}\times \P^{2},\mathcal{O}(2,2,0))\oplus H^{0}(\P^{1}\times\P^{2}\times \P^{2},\mathcal{O}(2,0,2)).
  \]
We let $l^{\circ}=l\cap U$; this is the maximal open set where $\widetilde{l}\to l$ is \'etale.

We claim that the induced monodromy representation
\[\rho_{l^{\circ}}\colon \pi_{1}(l^{\circ})\rightarrow S_{24}\]
 is surjective. 
 Indeed, 
 $\pi_{1}(l^{\circ})$
 is generated by loops around branch points $b\in  B$ of $\widetilde{l}\to l$, and the image of each such loop is a transposition in $S_{24}$. 
 The image of $\rho_{l^{\circ}}$ is moreover transitive since $\widetilde{l}$ is irreducible. 
 Any transitive subgroup of $S_{24}$ which is generated by transpositions must be $S_{24}$ itself, so it follows that 
 $\rho\colon \pi_{1}(U)\rightarrow S_{24}$ 
 is surjective.
\end{proof}

By the above lemma, we reduce to proving the congruence \eqref{c2} for a single $12$-tuple $1\leq j_{1}< \cdots<j_{12}\leq 24$. 
Indeed, if $g\in \pi_{1}(U)$
is a lift of a permutation $\sigma\in S_{24}$, 
then it will imply for any $1$-cycle $\alpha$ on $X$, there is a congruence
$$
\deg(\alpha/\P^{1})=\deg(g^{*}(\alpha)/\P^{1})\equiv \sum_{k=1}^{12} g^*(\alpha) \cdot E_{j_k} \equiv  \sum_{k=1}^{12} \alpha \cdot E_{\sigma^{-1}(j_k)} \mod 2.
$$
Here we have used $g^*(\alpha)\cdot E_{\sigma(j)} = \alpha \cdot E_j$ for each $j$, and the fact that $g^{*}(\alpha)$ is again an algebraic cycle because $X$ is very general.  We also have $\deg(g^*(\alpha)/\PP^1)=\deg(\alpha/\PP^1)$, because the degree is obtained by intersecting with the class of a fiber over $\PP^1$, which is invariant under monodromy.
Letting $\sigma$ run over all permutations, we see that the congruence will hold for all $12$-tuples.

To finish the proof of the congruence \eqref{c2}, we use a specialization argument. We consider $X$ as the complete intersection of three divisors $D_1,D_2,D_3$ in $|\O(2,1)|$ on $\PP^1\times \PP_A$. 
If we degenerate each $D_i$ to a union $D_i'+D_i''$,
 where $D_i'\in |\O(1,1)|$ and $D_i''\in |\O(1,0)|$ are very general divisors, 
 we obtain a family of threefolds $\X_T\to T$, with special fiber equal to
$$X_{0}\cup R_{1}\cup R_{2}\cup R_{3},$$
where $X_0$ is a very general intersection of three divisors in $|\O(1,1)|$, and $R_{1},R_{2},R_{3}$ are intersections of two divisors of type $\O(1,1)$ and one of type $\O(1,0)$. In particular, the $R_i$ are pairwise disjoint and can be regarded as complete intersections of two relative hyperplane sections in $\PP_A$.
By the geometric construction in Section \ref{FES}, we may regard $X_{0}$ as the rank one degeneracy locus in $\P^{1}\times \P^{2}\times \P^{2}$ of a very general map of vector bundles
\[
\O^{\oplus 3}\rightarrow \O(1,2,0)\oplus \O(1,0,2).
\] 
By construction, $X_{0}$ is  also the only dominant component with respect to the projection $X_{0}\cup R_{1}\cup R_{2}\cup R_{3}\rightarrow \P^{1}$.
Furthermore, again by genericity, we may assume that $X_{0}\cup R_{1}\cup R_{2}\cup R_{3}$ is a simple normal crossing variety and the intersection $(X_{0}\cup R_{1}\cup R_{2}\cup R_{3})\cap (\P^{1}\times E_{1})$ is transversal.

This degeneration allows us to specialize cycles on $X$ to cycles on $X_0\cup R_1\cup R_2 \cup R_2$. On the level of divisors, the union of $24$ components $E_{1,1},\ldots, E_{1,24}$ on $X$ specializes to 
the union of $12$ components $E_{1,1}^{(0)},\ldots, E_{1,12}^{(0)}$ on $X_{0}$ and $4$ components $E_{1,1}^{(l)},\ldots, E_{1,4}^{(l)}$ on $R_{l}$ for $l=1,2,3$ given by the intersections with $\P^{1}\times E_{1}$. 
Thus the chosen specialization gives a $12$-tuple $1\leq j_{1}<\cdots < j_{12}\leq 24$ such that $E_{1,j_{1}},\ldots, E_{1,j_{12}}$ specialize to $E_{1,1}^{(0)},\ldots,E_{1,12}^{(0)}$.

By  \cite[Section 20.3]{F} there is moreover a specialization map of Chow groups
$$
CH_1(X)\to CH_1(X_0\cup R_1\cup R_2 \cup R_3)
$$which is compatible with intersections with Cartier divisors. 
If 
$\alpha_{0}$
is the specialization of 
a $1$-cycle $\alpha$ on $X$,
we may write 
$\alpha_{0}=\alpha_{0}^{(0)}+\alpha_{0}^{(R)}$,
 where 
 $\alpha_{0}^{(0)}$
  is a $1$-cycle on $X_0$ and 
  $\alpha_{0}^{(R)}$ 
   is supported in $R_1\cup R_2\cup R_3$. 

Now we recall a key congruence obtained in the course of the proof of \cite[Theorem 3.1]{OS}: we have, for any $1$-cycle $\alpha_{0}^{(0)}$ on $X_{0}$, a congruence
\begin{eqnarray}\label{c1}
\deg(\alpha_{0}^{(0)}/\P^{1})\equiv \sum_{j=1}^{12}\alpha_{0}^{(0)}\cdot E_{1,j}^{(0)} \mod 2.
\end{eqnarray}
Note that 
$\deg (\alpha/\P^{1})=\deg(\alpha_{0}^{(0)}/\P^{1})$
 and 
 $\alpha \cdot E_{1,j_{k}}=\alpha_{0}\cdot E_{1,k}^{(0)}=\alpha_{0}^{(0)}\cdot E_{1,k}^{(0)}$
 since $E^{(0)}_{1,k}$ is disjoint from $R_1,R_2$ and $R_3$. 
 Thus from the congruence (\ref{c1}), we deduce the congruence (\ref{c2}) for $1\leq j_{1}<\cdots<j_{12}\leq 24$.
This completes the proof.
\end{proof}

Theorem \ref{t1} can be generalized to higher dimensions:

\begin{thm}\label{t1'}
For a positive integer $n$, we let $X\subset \P^{1}\times \P^{2n}\times \P^{2n}$ be the rank one degeneracy locus of a very general map of vector bundles
\[
\O^{\oplus (2n+1)}\rightarrow \O(2,2,0)\oplus \O(2,0,2).
\]
Then the first projection gives a family $X\rightarrow \P^{1}$ of $\O$-acyclic $2n$-folds such that any multi-section has even degree over the base $\P^{1}$.
That is, the index $I(X_{\eta})$ is even, where $X_{\eta}$ is the generic fiber.
\end{thm}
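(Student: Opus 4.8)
The plan is to run the argument of Theorem \ref{t1} essentially verbatim, replacing $\P^2$ by $\P^{2n}$ and keeping track of how the numerology changes. Writing $X$ as the complete intersection of $2n+1$ divisors $D_1,\dots,D_{2n+1}\in|\O(2,1)|$ on $\P^1\times\P_A$, where $\P_A$ is the analogue over $\P^{2n}\times\P^{2n}$ of the projective bundle of Section \ref{FES}, the ramification locus meets the family in planes $E_{1,1},\dots,E_{1,N}$, where $N=(2n+1)2^{2n+1}$ is the number of points cut out by $2n+1$ divisors of type $(2,2)$ on $\P^1\times\P^{2n}$. The $\O$-acyclicity of the fibers and the basic facts playing the role of Lemma \ref{l1} follow from the same Koszul and Eagon--Northcott computations as in \cite{OS}. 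As before, it suffices to prove that for every $1$-cycle $\alpha$ on $X$ and every subset $J\subset\{1,\dots,N\}$ of size $N/2$ one has
\[
\deg(\alpha/\P^1)\equiv \sum_{j\in J}\alpha\cdot E_{1,j}\pmod 2,
\]
since varying $J$ forces all the $\alpha\cdot E_{1,j}$ to be congruent mod $2$, whence $\deg(\alpha/\P^1)\equiv (N/2)\,\alpha\cdot E_{1,1}\equiv 0$ because $N/2=(2n+1)2^{2n}$ is even for $n\ge 1$.

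Next I would reproduce the two halves of the argument. For the monodromy step I would restrict over a line $l\subset G$ coming from a Lefschetz pencil on $\widetilde{l}\subset\P^1\times\P^{2n}$, where $\widetilde{l}$ is a general complete intersection of $2n$ divisors of type $(2,2)$; the associated degree-$N$ cover $\widetilde{l}\to\P^1$ has irreducible total space and only simple branch points, so its monodromy is a transitive subgroup of $S_N$ generated by transpositions, hence all of $S_N$. This reduces the congruence to a single $J$. For the specialization step I would degenerate each $D_i$ to $D_i'+D_i''$ with $D_i'\in|\O(1,1)|$ and $D_i''\in|\O(1,0)|$; the unique component of the central fiber dominating $\P^1$ is $X_0=D_1'\cap\cdots\cap D_{2n+1}'$, which is the rank one degeneracy locus of $\O^{\oplus(2n+1)}\to\O(1,2,0)\oplus\O(1,0,2)$, and exactly $N/2=(2n+1)2^{2n}$ of the planes $E_{1,j}$ specialize onto $X_0$ (the number of points cut out by $2n+1$ divisors of type $(1,2)$ on $\P^1\times\P^{2n}$), the remaining ones landing on the non-dominant components. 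Using the specialization map on Chow groups of \cite[Section 20.3]{F} exactly as in Theorem \ref{t1}, the congruence for $X$ would then follow from the corresponding congruence on $X_0$.

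The crux, and the step I expect to be the main obstacle, is the base congruence
\[
\deg(\beta/\P^1)\equiv\sum_{j=1}^{N/2}\beta\cdot E_{1,j}^{(0)}\pmod 2
\]
for $1$-cycles $\beta$ on the $\O(1,\dots)$-locus $X_0\subset\P^1\times\P^{2n}\times\P^{2n}$. For $n=1$ this is precisely the congruence extracted from the proof of \cite[Theorem 3.1]{OS}. For general $n$ I would reprove it by adapting that argument: the relevant geometry, namely the double cover branched along the $E_{1,j}^{(0)}$ and the resulting $2$-torsion class in the N\'eron--Severi group of the geometric generic fiber, persists in higher dimensions, and the Hodge-theoretic input should carry over once the integral cohomology of $X_0$ is computed in parallel with Lemma \ref{l2}. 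An alternative worth trying is an induction on $n$, degenerating in the $\P^{2n}$-factors to drop the fiber dimension. In either approach the delicate point is controlling the integral cohomology and, in particular, the mod $2$ reduction of the non-algebraic class producing the obstruction, which is exactly where the failure of the integral Hodge conjecture enters.
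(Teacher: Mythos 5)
Your plan coincides with the paper's own proof of Theorem \ref{t1'}, which likewise reduces the statement to the congruence $\deg(\alpha/\P^{1})\equiv\sum_{k=1}^{(2n+1)2^{2n}}\alpha\cdot E_{1,j_{k}}\pmod 2$ for all $(2n+1)2^{2n}$-tuples and explicitly leaves the details---the same monodromy and specialization steps you describe, with exactly your numerology $N=(2n+1)2^{2n+1}$ planes, $N/2$ of which specialize onto $X_{0}$---to the reader. The base congruence on the $\O(1,2,0)\oplus\O(1,0,2)$ degeneracy locus that you single out as the crux is likewise taken for granted there as the higher-dimensional analogue of the congruence from \cite[Theorem 3.1]{OS}, so your proposal is, if anything, more explicit than the paper about where the remaining work lies.
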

\begin{proof}
The geometry of the family of $\O$-acyclic $2n$-folds is similar to that of Lemma \ref{l1}.
An alternative projective model of $X$ is given by a complete intersection in
\[
\P^{1}\times \P_{\P^{2n}\times \P^{2n}}(\O(2,0)\oplus \O(0,2))
\]
of $(2n+1)$ divisors of type $(2,1)$,
and the intersection
\[
X\cap (\P^{1}\times \P_{\P^{2n}\times \P^{2n}}(\O(2,0)))
\]
consists of $(2n+1)2^{2n+1}$ components $E_{1,1},\ldots, E_{1,(2n+1)2^{2n+1}}$.
The theorem follows from a congruence
\begin{eqnarray*}
\deg(\alpha/\P^{1})\equiv \sum_{k=1}^{(2n+1)2^{2n}}\alpha\cdot E_{1,j_{k}} \mod 2
\end{eqnarray*}
for any $1$-cycle $\alpha$ on $X$ and for any $(2n+1)2^{2n}$-tuple $1\leq j_{1}<\cdots<j_{(2n+1)2^{2n}}\leq (2n+1)2^{2n+1}$.
We leave the details of the proof to the reader.
\end{proof}

\section{Degenerations and examples over $\Q$}\label{DEQ}
We now explain how to give examples as in Theorem \ref{T1} defined over the rational numbers. The construction is similar to the one used in the previous section, but the degeneration argument now uses Enriques fibrations defined in terms $2\times 3$-minors, rather than complete intersections of three divisors.

We will work over $\overline{\Q}$ and set 
\[\P^{1}\times \P^{2}\times \P^{2}=\Proj \overline{\Q}[s,t]\times \Proj \overline{\Q}[x_{0},x_{1},x_{2}]\times \Proj \overline{\Q}[y_{0},y_{1},y_{2}].\]
The goal is to prove the following:

\begin{thm}\label{t1Q}
Let $p_{i}, q_{i}, r_{i}, s_{i}$ $(i=0,1,2)$ be general homogeneous polynomials of tridegree $(1,2,0), (0,0,2), (2,2,0), (2,0,2)$ in variables $s,t,x_i,y_i$ defined over $\Q$.
Then there exists a prime number $p$ such that,
if $X\subset \P^{1}\times \P^{2}\times \P^{2}$ is the rank one degeneracy locus of a map of vector bundles
\[
\O^{\oplus 3}\rightarrow \O(2,2,0)\oplus\O(2,0,2)
\]
given by the matrix
\begin{eqnarray}\label{eq4}
M=\begin{pmatrix}
s p_0 + p  r_0 & (s-t)p_1+ p  r_1 & (s+t)p_2+ p  r_2 \\ 
stq_0 + p  s_0& t(s-t)q_1+ p  s_1 & t(s+t)q_2+ p  s_2 
\end{pmatrix},
\end{eqnarray}
then the first projection gives a family $X\rightarrow \P^{1}$ of Enriques surfaces such that any multi-section has even degree over the base $\P^{1}$.
That is, the index $I(X_{\eta})$ is even, where $X_{\eta}$ is the generic fiber.
\end{thm}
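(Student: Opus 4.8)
The plan is to follow the same two-step strategy as in the proof of Theorem \ref{t1} — first establish the congruence \eqref{c2} for a single $12$-tuple, then upgrade it to all $12$-tuples, deduce \eqref{c3}, and conclude that $\deg(\alpha/\P^1)$ is even for every $1$-cycle $\alpha$ — but to replace the two ingredients that used the complex topology. Over $\overline{\Q}$ there is no notion of a \emph{very general} member, and the key place this was used, namely the fact that $g^{*}\alpha$ stays algebraic when $g$ is a topological monodromy operator, breaks down. The remedy is to let the absolute Galois group $\Gal(\overline{\Q}/\Q)$ play the role of the geometric monodromy: it permutes the $24$ planes $E_{1,1},\dots,E_{1,24}$, it carries algebraic cycles to algebraic cycles tautologically, and it fixes both the fiber class and all intersection numbers. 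The role of the complex degeneration of Theorem \ref{t1} is taken over by reduction modulo the prime $p$, whose only purpose is to force the matrix $M$ to degenerate to the factored matrix $M_{0}$, which reduces everything to the threefold of \cite[Theorem 3.1]{OS}.

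For the Galois step, I would first fix general homogeneous polynomials $p_i,q_i,r_i,s_i$ over $\Q$. Dividing $M$ by $p$ and letting $p\to\infty$ shows that $X$ is a small deformation of the rank one degeneracy locus of $\begin{pmatrix} r_0 & r_1 & r_2 \\ s_0 & s_1 & s_2\end{pmatrix}$, i.e.\ of a general threefold of the type in Theorem \ref{t1}; hence for all sufficiently large $p$ the variety $X$ is a smooth Enriques fibration carrying $24$ disjoint planes $E_{1,j}$. These planes are indexed by the $24$ points cut out on $\P^1\times\P^2$ by the associated system, and the geometric monodromy of this configuration is the full $S_{24}$ — this is exactly the content of the monodromy lemma in the proof of Theorem \ref{t1}. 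By Hilbert's irreducibility theorem I can then choose the rational data, together with a suitable large prime $p$, so that the $\Gal(\overline{\Q}/\Q)$-action on $\{E_{1,1},\dots,E_{1,24}\}$ is all of $S_{24}$. Granting the single-tuple congruence \eqref{c2}, for any $\sigma\in\Gal(\overline{\Q}/\Q)$ inducing the permutation $\pi$ I then obtain
\[
\deg(\alpha/\P^1)=\deg(\sigma_{*}\alpha/\P^1)\equiv\sum_{k=1}^{12}\sigma_{*}\alpha\cdot E_{1,j_k}=\sum_{k=1}^{12}\alpha\cdot E_{1,\pi^{-1}(j_k)}\pmod 2,
\]
and letting $\pi$ range over $S_{24}$ yields \eqref{c2} for every $12$-tuple, hence \eqref{c3} and the theorem.

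To prove the single-tuple congruence I would run the specialization over the discrete valuation ring $\Z_{(p)}$. Reducing $M$ modulo $p$ kills the terms $pr_i,ps_i$ and leaves $M_{0}$, which factors as
\[
M_{0}=\begin{pmatrix} p_0 & p_1 & p_2 \\ tq_0 & tq_1 & tq_2\end{pmatrix}\cdot\operatorname{diag}(s,\,s-t,\,s+t).
\]
Away from the three points $s=0$, $s-t=0$, $s+t=0$ the right factor is invertible, so the special fiber decomposes as $X_{0}\cup R_{1}\cup R_{2}\cup R_{3}$, where $X_{0}$ is the rank one degeneracy locus of the left matrix — a map $\O^{\oplus3}\to\O(1,2,0)\oplus\O(1,0,2)$, i.e.\ precisely the threefold of \cite[Theorem 3.1]{OS} — and each $R_{l}$ is a vertical $(2,2)$-divisor sitting over one of the three points. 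For suitable $p$ this is a simple normal crossings degeneration, transverse to $\P^1\times E_1$, in which the $24$ planes split as $12$ planes on $X_{0}$ and $4$ on each $R_{l}$. Specializing a $1$-cycle $\alpha$ via \cite[Section 20.3]{F} and writing its limit as $\alpha_0^{(0)}+\alpha_0^{(R)}$ with $\alpha_0^{(0)}$ supported on $X_0$, the congruence \eqref{c1} of \cite{OS} applied to $\alpha_0^{(0)}$ — together with the disjointness of the $E^{(0)}_{1,k}$ from the $R_l$ and the invariance of degree and of intersection numbers under specialization — gives \eqref{c2} for the distinguished $12$-tuple.

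I expect the main obstacle to be arranging this single reduction $X\rightsquigarrow X_0\cup R_1\cup R_2\cup R_3$ cleanly: one must choose the rational data and the prime $p$ simultaneously so that $X$ is smooth with full $S_{24}$ Galois action \emph{and} the reduction is the expected simple normal crossings union with $X_0$ genuinely of the \cite{OS}-type, and one must verify that the congruence \eqref{c1} — proved in \cite{OS} over $\C$ — remains valid for $X_0$ over $\overline{\F}_p$. The latter should hold because \eqref{c1} is a congruence between intersection numbers that is insensitive to the characteristic (it can be read off from $2$-adic étale cohomology, as $2\ne p$, or by lifting $X_0$ back to characteristic zero), while the compatibility of all the bad-prime conditions is handled by excluding finitely many primes and invoking Hilbert irreducibility for the remaining choice of $p$.
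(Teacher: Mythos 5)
Your proposal follows the same architecture as the paper's proof: the $\Gal(\overline{\Q}/\Q)$-action on the $24$ planes replaces the topological monodromy, full $S_{24}$-action is arranged via Hilbert irreducibility with the prime $p$ itself doubling as degeneration parameter, and the single-tuple congruence is obtained by specializing over a valuation ring whose maximal ideal contains $p$, so that the limit contains an OS-type threefold to which the congruence \eqref{c1} of \cite{OS} applies. The paper formalizes the first step by embedding $X$ in the pencil $M_{(\lambda,\mu)}$ of \eqref{eq5} over $B=\P^1_{(\lambda:\mu)}$, with $X$ the fiber at $(\lambda,\mu)=(1,p)$; note that plain Hilbert irreducibility only gives the complement of a thin set in $B(\Q)$, and to get infinitely many \emph{prime} values $\mu=p$ one needs Serre's refinement \cite[Section 9.6]{Serre}, which the paper invokes in Lemma \ref{fullmonodromy} and which your sketch glosses over.

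The genuine gap is in your description of the special fiber, which is exactly the delicate point here. Every $2\times 2$ minor of your $M_0$ is divisible by $t$ (the bottom row of your left factor is $(tq_0,tq_1,tq_2)$), so the scheme cut out by the minors of $M$ over $\Z_{(p)}$ has special fiber containing the entire divisor $\{t=0\}$; in particular this family is \emph{not flat}, and the specialization map of \cite[Section 20.3]{F} cannot be applied to it as written. The paper computes the honest flat closure via $3\times 3$ minors of a $3\times 4$ matrix, and the flat limit is $\widetilde{X}_0\cup R_0\cup R_1\cup R_2\cup R_3$: besides your three components there is a fourth vertical component $R_0\subset\{t=0\}$, cut out by $t=\det=0$, which depends on the $s_i$, i.e.\ on the direction of approach. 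Relatedly, your identification of $X_0$ as the degeneracy locus of the left factor, ``precisely the threefold of \cite{OS}'', is incorrect: that left factor is a map $\O^{\oplus 3}\to\O(1,2,0)\oplus\O(1,0,2)$ but a non-general one, and its degeneracy locus is the reducible scheme $\{t=0\}\cup\widetilde{X}_0$, where $\widetilde{X}_0$ is the degeneracy locus of the matrix with rows $(p_0,p_1,p_2)$ and $(q_0,q_1,q_2)$, of type $\O(1,2,0)\oplus\O(0,0,2)$; it is to this $\widetilde{X}_0$ that the paper applies the congruence from the proof of \cite[Theorem 3.1]{OS}. These errors are repairable and do not derail the endgame, since $R_0$ is vertical and disjoint from the twelve distinguished planes $E^{(0)}_{1,j}$, but your specialization step fails as stated. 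Finally, your appeal to a simple normal crossings degeneration is not available — a mod-$p$ reduction cannot be made ``general'' — and the paper substitutes for it the observation that the relevant components $\E_{1,j_1},\dots,\E_{1,j_{12}}$ are supported in the regular locus of $(\overline{\X}_R)_{(\lambda,\mu)=(1,p)}\setminus(R_0\cup R_1\cup R_2\cup R_3)$ and are therefore Cartier, which is what makes the intersection numbers of specialized cycles well defined and compatible with specialization. Your closing remark on the characteristic-$p$ validity of \eqref{c1} (via the $2$-divisibility relation specializing, with $2\neq p$) is sound and addresses a point the paper leaves implicit.
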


Note that for general $p_{i},q_{i},r_{i},s_{i}$ defined over $\Q$ and large $p$, the threefold $X$ is smooth and irreducible. 

In order to prove Theorem \ref{t1Q}, it will be convenient to introduce the following 1-dimensional family of degeneracy loci of vector bundles
\[
\O^{\oplus 3}\rightarrow \O(2,2,0)\oplus \O(2,0,2)
\]
on $\P^{1}\times\P^{2}\times \P^{2}$.
We set $B=\Proj \overline{\Q}[\lambda, \mu]$ and define the total space $\X$ as the subvariety of $B\times \PP^1\times \PP^2\times \PP^2$ defined by the maximal minors of the matrix
\begin{eqnarray}\label{eq5}
M_{(\lambda, \mu)}=\begin{pmatrix} \lambda s p_0 + \mu  r_0 & \lambda (s-t)p_1+ \mu  r_1 & \lambda (s+t)p_2+ \mu  r_2 \\ 
\lambda stq_0 + \mu  s_0& \lambda t(s-t)q_1+ \mu  s_1 & \lambda t(s+t)q_2+ \mu  s_2 \end{pmatrix},
\end{eqnarray}
where the $p_i,q_i,r_i,s_i$ have tridegrees $(1,2,0)$, $(0,0,2)$, (2,2,0) and $(2,0,2)$ respectively.

Let $\X\to B$ denote the natural projection map onto the first factor. By construction, the generic fiber $\X_{\eta_{B}}$ is a smooth threefold with an Enriques surface fibration $\X_{\eta_{B}}\rightarrow \P^{1}_{\eta_{B}}$.  
The morphism $\X\to B$ is flat outside of the fiber $(\lambda, \mu)=(1,0)$; 
we will compute the flat closure of  $\X_{\eta_{B}}$ in $B\times \PP^1\times \PP^2\times \PP^2$ below. 
In any case, in order to prove Theorem \ref{t1Q}, we will mainly be interested in the fiber over $(\lambda, \mu)=(1,p)$.

For now, let $\E_{1}\subset \X$ denote the codimension $1$ subscheme defined by the top row of $M_{(\lambda, \mu)}$, i.e.,
\begin{eqnarray}\label{eq6}
\lambda s p_0 + \mu  r_0 = \lambda (s-t)p_1+ \mu  r_1 = \lambda (s+t)p_2+ \mu r_2 = 0.
\end{eqnarray}
By Bertini, $\E_1$ is smooth and irreducible for general $p_i,r_i$. 
Let $\E_{1}\rightarrow \widetilde{B}\rightarrow B$ denote the Stein factorization of $\E_{1}\rightarrow B$.
The morphism $\widetilde{B}\to B$ is finite of degree 24; over a general point $b\in B$ the fiber corresponds to the 24 distinct planes $E_{1,1},\ldots,E_{1,24}$ in $\X_b$. 
We let $B^{\circ}\subset B$ denote the maximal open set over which $\widetilde{B}\rightarrow B$ is \'etale. 
There is an associated monodromy representation
$$
\rho : \pi_1^{\et}(B^\circ)\to S_{24}.
$$

\begin{lem}
For general $p_i,r_i$ as above, the map $\rho$ is surjective.
\end{lem}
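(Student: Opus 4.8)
The statement is the arithmetic analogue of the monodromy lemma already proven over $\C$ in Section \ref{PMT}. So my first instinct is to reduce to that result. The family $\mathcal{E}_1\to B$ is defined over $\overline{\Q}$, but I can base-change everything to $\C$ via a fixed embedding $\overline{\Q}\hookrightarrow \C$. The étale fundamental group and the étale monodromy representation are compatible with this base change: there is a surjection $\pi_1^{\et}(B^\circ_\C)\to \pi_1^{\et}(B^\circ_{\overline{\Q}})$ (or more precisely, the comparison fits into the homotopy exact sequence, but for the image of the monodromy into the finite group $S_{24}$ what matters is that the 24 geometric sheets are the same and the permutation action factors through the same finite quotient). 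Hence it suffices to show the complex-analytic monodromy of the 24 planes is all of $S_{24}$.

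**The line construction.** The key point is that the family here is a one-parameter degeneration: $\mathcal{E}_1$ is cut out by the three equations \eqref{eq6}, which for fixed $(\lambda,\mu)$ specialize to the vanishing of the top row of the matrix. I want to reproduce the argument of the previous lemma by finding inside $B$ (or a suitable parameter space) a Lefschetz-type pencil. At the special value $(\lambda,\mu)=(0,1)$ the equations become $r_0=r_1=r_2=0$, and at $(\lambda,\mu)=(1,0)$ they become $sp_0=(s-t)p_1=(s+t)p_2=0$. The plan is to exhibit, for the generic member, that the 24 sheets undergo transpositions as one loops around the discriminant locus where two of the $E_{1,j}$ collide, and that the total monodromy is transitive. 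Transitivity follows because $\mathcal{E}_1$ is irreducible (guaranteed by Bertini for general $p_i,r_i$), and a transitive subgroup of $S_{24}$ generated by transpositions is all of $S_{24}$. So the two things to establish are: (i) irreducibility of $\mathcal{E}_1$, hence transitivity of the image; and (ii) that the local monodromy at each branch point of $\widetilde{B}\to B$ is a simple transposition, i.e.\ the branch locus is reduced with only simple (nodal) ramification.

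**The main obstacle.** Over $\C$, step (ii) was obtained by restricting to a line $l$ and invoking the Lefschetz pencil theorem \cite[Theorem XVII.2.5]{DK}, which guarantees simple transpositions. The difficulty in the present $\overline{\Q}$-setting is that I am not free to choose an arbitrary line in a large Grassmannian: the family $\mathcal{E}_1\to B$ is already a \emph{specific} pencil, parametrized by $(\lambda,\mu)\in B=\P^1$, and I must check directly that the 24-sheeted cover $\widetilde{B}\to B$ is simply branched. Equivalently, I need that the discriminant of the 24 points (as $(\lambda,\mu)$ varies) vanishes to order exactly one at each of its zeros. This is where the genericity of $p_i,r_i$ enters: the hard part will be verifying that for general $p_i,r_i$ defined over $\Q$, the cover $\widetilde B\to B$ has only simple ramification, so that each loop around a branch point maps to a transposition. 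This should follow by a dimension/general-position argument (the locus of $p_i,r_i$ giving a non-simple branch point is a proper closed subset), combined with the irreducibility from Bertini to upgrade "generated by transpositions and transitive" to "equals $S_{24}$''. Since surjectivity onto $S_{24}$ is an open condition realized already over $\C$ by the previous lemma, and the configuration is defined over $\overline{\Q}$, I expect the cleanest route is to note that the complex monodromy computed previously and the present étale monodromy agree, so surjectivity descends automatically to $\overline{\Q}$, with the Bertini irreducibility statement being the only piece that must be re-checked in the arithmetic setting.
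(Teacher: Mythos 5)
Your main line of argument is essentially the paper's proof: the paper likewise checks (asserting it is straightforward for general $p_i,r_i$) that the cover $\widetilde{B}\to B$ is Lefschetz, notes that $\pi_1^{\et}(B^{\circ})$ is generated by loops around the branch points (SGA~1, XIII, Corollaire 2.12), so that each such loop maps to a transposition, and obtains transitivity from the irreducibility of $\E_1$, concluding via the standard fact that a transitive subgroup of $S_{24}$ generated by transpositions is all of $S_{24}$. One caution: your closing suggestion that surjectivity ``descends automatically'' from the lemma of Section 3 would not work if relied upon, since that lemma computes the monodromy of the \emph{universal} family over the Grassmannian $G$ while $\E_1\to B$ is a specific pencil of constrained shape inside it, so the direct verification of simple branching for $\widetilde{B}\to B$ --- which you correctly identify earlier as the essential step --- cannot be bypassed.
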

\begin{proof}
Note that $\widetilde{B}$ is defined by \eqref{eq6} inside $B\times \PP^1\times \PP^2$. 
It is straightforward to check that the cover $\widetilde{B}\rightarrow B$ is Lefschetz for general $p_{i},r_{i}$.
Now the assertion follows from an argument similar to that in the proof of Theorem \ref{t1}.
We note that $\pi_{1}^{\et}(B^{\circ})$ is generated by loops around the branch points of $\widetilde{B}\rightarrow B$ \cite[XIII, Corollaire 2.12]{Gro} and the image of each loop is a transposition in $S_{24}$.
\end{proof}

The parameter space for the families of threefolds given by (\ref{eq5}) is a certain rational variety, hence has a Zariski dense set of $\Q$-rational points.
As a consequence, we can choose $p_{i},q_{i},r_{i},s_{i}$ defined over $\Q$ such that $\rho$ is surjective.  We will therefore in the following choose $p_i,q_i,r_i,s_i$ satisfying the above conditions: thus for the family $\X\to B$ over $\Q$, the generic fiber is smooth and irreducible; $\E_1$ is smooth and irreducible; and the monodromy map
\[
\rho \colon \pi_{1}^{\et}(B^{\circ})\rightarrow S_{24}
\]
is surjective.

\begin{lem}\label{fullmonodromy}
There are infinitely many prime numbers $p$ such that
if $x\in B$ is given by $(\lambda, \mu)=(1,p)$, then the induced map
\[\rho_{x}: \pi_1^{\et}(x,\overline{x})=\Gal(\overline{\Q}/\Q)\to S_{24}\]
 is surjective.
\end{lem}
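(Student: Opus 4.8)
The plan is to view this as a specialization statement for Galois representations: we have a family $\widetilde{B}\to B$ defined over $\Q$ with geometric monodromy group equal to $S_{24}$, and we want to find a $\Q$-rational point $x\in B$ (of the special form $(\lambda,\mu)=(1,p)$ for a prime $p$) whose fiber $\widetilde{B}_x$ is a $24$-element set on which $\Gal(\overline{\Q}/\Q)$ acts through the full symmetric group $S_{24}$. The natural tool is Hilbert irreducibility together with a comparison between the arithmetic and geometric fundamental groups. First I would record the exact sequence
\[
1\to \pi_1^{\et}(B^\circ_{\overline{\Q}})\to \pi_1^{\et}(B^\circ_{\Q})\to \Gal(\overline{\Q}/\Q)\to 1,
\]
noting that the surjectivity of $\rho$ proved in the previous lemma is precisely the statement that the composite $\pi_1^{\et}(B^\circ_{\overline{\Q}})\to S_{24}$ is onto; since $B^\circ$ and the cover are defined over $\Q$, the image of the full arithmetic group $\pi_1^{\et}(B^\circ_{\Q})\to S_{24}$ is therefore also all of $S_{24}$.

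\textbf{Applying Hilbert irreducibility.}
The cover $\widetilde{B}^\circ\to B^\circ$ is a finite \'etale cover defined over $\Q$ whose Galois closure has group $S_{24}$ (the Galois group of the cover equals the image of arithmetic monodromy, which we have just seen is $S_{24}$). Since $B^\circ$ is an open subset of $\P^1_\Q$, hence $\Q$-unirational, Hilbert's irreducibility theorem applies: the set of $\Q$-rational points $x\in B^\circ(\Q)$ for which the specialized extension $\widetilde{B}_x/x$ has Galois group equal to the full $S_{24}$ is a Hilbert subset, and in particular is Zariski dense (indeed its complement is thin). For such an $x$, the specialized representation $\rho_x\colon \Gal(\overline{\Q}/\Q)\to S_{24}$ is by definition surjective. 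This already produces infinitely many suitable $\Q$-rational points on $B^\circ$.

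\textbf{Forcing the points to have the required form.}
The only remaining issue is that Theorem~\ref{t1Q} requires the point to be of the special shape $(\lambda,\mu)=(1,p)$ with $p$ a prime number, not an arbitrary rational point. To arrange this I would intersect the Hilbert subset with the countable set $\{(1,p)\mid p\text{ prime}\}$. The cleanest route is to use the quantitative form of Hilbert irreducibility: among the integers $\mu$ in a range $|\mu|\le N$, the number of exceptional values (those for which the specialization fails to be surjective, or for which $x\notin B^\circ$, i.e. $x$ lies over the finitely many branch points or the non-flat locus) is $O(N^{1-\delta})$ for some $\delta>0$, whereas by the prime number theorem there are $\asymp N/\log N$ primes up to $N$. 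Since $N/\log N$ grows faster than $N^{1-\delta}$, infinitely many primes $p$ must avoid the exceptional set, and for each such $p$ the point $(1,p)$ lies in the Hilbert subset, giving surjective $\rho_x$.

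\textbf{Main obstacle.}
The genuinely delicate step is the last one: Hilbert irreducibility by itself only guarantees a thin complement, which is automatically avoided by \emph{most} rational points but need not be avoided by a sparse prescribed sequence such as $\{(1,p)\}_{p\text{ prime}}$. One must therefore invoke an effective/quantitative version of the theorem (e.g. the Cohen--Serre bounds on thin sets, \cite[Ch.~9--13]{Serre}) and combine it with the density of primes to guarantee that the one-parameter prime sequence meets the good locus infinitely often. Verifying that the relevant thin set has the required subpolynomial growth, and checking that the finitely many bad fibers (branch locus of $\widetilde B\to B$ and the non-flat fiber $(\lambda,\mu)=(1,0)$) only remove finitely many primes, is the technical heart of the argument; the group-theoretic input that a transitive subgroup of $S_{24}$ generated by transpositions is all of $S_{24}$ has already been supplied by the geometric monodromy lemma.
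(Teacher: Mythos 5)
Your proposal is correct and follows essentially the same route as the paper: Hilbert irreducibility gives that the locus of surjective specializations is the complement of a thin set, and the sparse sequence $(1,p)$ is handled exactly as you suggest, via the quantitative bound on integral points of thin sets combined with the density of primes (the paper cites precisely \cite[Section 9.6, Theorem]{Serre} for this step). The only difference is presentational: the paper compresses the arithmetic-versus-geometric monodromy comparison and the counting argument into a two-sentence proof, whereas you spell both out.
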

\begin{proof}
The setting resembles that of \cite[Section 1]{Te} (but is more classical).
By Hilbert's irreducibility theorem, the set $\left\{x\in B^{\circ}(\Q)\mid \rho_{x}\text{ is surjective}\right\}$ is the complement of a thin set in $B(\Q)=\P^{1}(\Q)$.
Moreover, the complement of a thin set in $\P^{1}(\Q)$ contains infinitely many points with $(\lambda, \mu)=(1,p)$ for some prime number $p$ (see \cite[Section 9.6, Theorem]{Serre}), which gives us the desired conclusion.
\end{proof}

To conclude the proof of Theorem \ref{t1Q}, we again use a specialization argument as in Theorem \ref{t1}. 
We begin by computing the flat limit of the family $\X\to B $ over $(\lambda, \mu)=(1,0)$.

Note that $\X$ contains $\left\{\mu=t=0\right\}$ as a component. 
Removing this component reveals that the flat closure of $\X_{\eta_{B}}$ in $B\times \P^{1}\times \P^{2}\times \P^{2}$ is defined by the $3\times 3$-minors of the matrix
$$
\begin{pmatrix}
\lambda s p_0 + \mu r_0 & \lambda (s-t) p_1+ \mu r_1 & \lambda (s+t) p_2 + \mu r_2 & 0 \\
\lambda s q_0 & \lambda (s-t)q_1 & \lambda (s+t)q_2 & \mu\\
s_0 & s_1 & s_2  & -t \\
\end{pmatrix}
$$


The corresponding family $\overline{\X}\to B$ is flat and has special fiber $\overline{\X}_0$ over $(\lambda, \mu)=(1,0)$ given by a union $\widetilde{X}_{0}\cup R_{0}\cup R_{1}\cup R_{2}\cup R_{3}$, where $\widetilde{X}_{0}$ is given by the minors of the matrix
$$
N=\begin{pmatrix} p_0 & p_1 & p_2 \\ q_0 & q_1 & q_2 \end{pmatrix},
$$
$R_{0}$ is given by
$$
t=\det \begin{pmatrix}p_0 & p_1 & p_2\\ q_0 & q_1 & q_2 \\ s_0 &s_1 & s_2\end{pmatrix}=0,
$$
and $R_{1},R_{2},R_{3}$ are respectively given by
\begin{eqnarray*}
s=p_2q_1-p_1q_2=0,\, s-t =p_2q_0-p_0q_2=0,\, s+t=p_1q_0-p_0q_1=0.
\end{eqnarray*}
Note that the $R_i$ are pairwise disjoint, and $\widetilde{X}_{0}$ is regular if the $p_i,q_i$ are general. 

Similarly, the subfamily $\E_{1}\rightarrow B$, given by \eqref{eq5}, has a special fiber over $(\lambda, \mu)=(1,0)$
 which consists of the union of $12$ components $E_{1,1}^{(0)},\ldots, E_{1,12}^{(0)}$ supported on $X_{0}$ given by
\[
p_{0}=p_{1}=p_{2}=0
\]
 and the unions of $4$ components $E_{1,1}^{(l)},\ldots, E_{1,4}^{(l)}$ supported on $R_{l}$ for $l=1,2,3$ respectively given by
 \[
 s=p_{1}=p_{2}=0,\, s-t=p_{0}=p_{2}=0, \, s+t=p_{0}=p_{1}=0.
 \]
 
It is important to note that $E_{1,1}^{(0)},\ldots, E_{1,12}^{(0)}$ are Cartier divisors on $\overline{\X}_{0}$ since they are supported on $\overline{\X}_{0}\setminus (R_{0}\cup R_{1}\cup R_{2}\cup R_{3})$ which is regular.


Let $p$ and $x\in B$ be as in Lemma \ref{fullmonodromy}. 
For any valuation ring $R\subset \overline{\Q}$ whose maximal ideal contains $p$, we have the following diagram of restrictions:
\[
\xymatrix{
\X_{\overline{x}}=\overline{\X}_{\overline{x}} \ar[d] \ar[r] & (\overline{\X}_{R})_{(\lambda,\mu)=(1,p)} \ar[d] \ar[r]   &\overline{\X}_{R} \ar[d] &  \overline{\X}\ar[d]\ar[l]      \\
\overline{x} \ar[r]     & \Spec R \ar[r]^{(\lambda, \mu)=(1,p)} & B_{R} &    B\ar[l]                     \\
& \Spec \overline{\F}_{p} \ar[u] \ar[r]   & \Spec R \ar[u]_{(\lambda,\mu)=(1,0)}   &            
}.
\]

\def\M{\mathcal M}

\begin{proof}[Proof of Theorem \ref{t1Q}]
Let $p_{i},q_{i},r_{i},s_{i}$ be general and defined over $\Q$.
Let $p$ be a sufficiently large prime number which satisfies Lemma \ref{fullmonodromy} and let $X=\X_{\overline{x}}$.
We prove that any multi-section of $X\rightarrow \P^{1}$ has even degree over the base $\P^{1}$.
As in the proof of Theorem \ref{t1}, it is enough to prove, for any $1$-cycle $\alpha$ on $X$ and for any $12$-tuple $1\leq j_{1}<\cdots<j_{12}\leq 24$, a congruence
\[
\deg(\alpha/\P^{1})\equiv \sum_{k=1}^{12}\alpha\cdot E_{1,j_{k}} \mod 2.
\]
By Lemma \ref{fullmonodromy}, it suffices to verify this congruence for some $12$-tuple $1\leq j_{1}<\cdots< j_{12}\leq 24$.
To establish this, we use the above family over $\Spec R$, which allows us to specialize cycles from $X$ to cycles on $((\overline{\X}_{R})_{(\lambda, \mu) = (1,p)})_{\overline{\F}_{p}}$.

For a sufficiently large valuation ring $R\subset \overline{\Q}$ whose maximal ideal contains $p$, the specialization $((\E_{1})_{R})_{(\lambda, \mu) = (1, p)}\subset (\overline{\X}_{R})_{(\lambda,\mu) = (1,p)}$ is a disjoint union of $24$ components $\E_{1,1},\ldots, \E_{1,24}$, each of which is isomorphic to $\P^{2}_{R}$.
Let $\E_{1,j_{1}},\ldots, \E_{1,j_{12}}$ be the components which restrict to $E_{1,1}^{(0)},\cdots, E_{1,12}^{(0)}$ on the special fiber $((\overline{\X}_{R})_{(\lambda,\mu)=(1,p)})_{\overline{\F}_{p}}$.
Then $\E_{1,j_{1}},\ldots, \E_{1,j_{12}}\subset (\overline{\X}_{R})_{(\lambda, \mu) =(1,p)}$ are Cartier divisors since they are supported on $(\overline{\X}_{R})_{(\lambda, \mu) =(1, p)}\setminus (R_{0}\cup R_{1}\cup R_{2}\cup R_{3})$ which is regular.

Now by the specialization homomorphism for Chow groups \cite[Ex. 20.3.5]{F}, the desired congruence follows from 
a congruence in the proof of \cite[Theorem 3.1]{OS}:
we have, for any $1$-cycle $\widetilde{\alpha}_{0}$ on $\widetilde{X}_{0}$, a congruence
\[
\deg(\widetilde{\alpha}_{0}/\P^{1})\equiv \sum_{j=1}^{12} \widetilde{\alpha}_{0} \cdot E_{1,j}^{(0)} \mod 2.
\]
The proof is complete.
\end{proof}
The above proof uses a specialization argument which does not extend in general to other fields.
One natural question is whether one can find such examples defined over the algebraic closure of a finite field.
In contrast to the examples above, we prove some positive results in this situation, conditional on the Tate conjecture.
We recall that the Tate conjecture in degree $2i$ on a smooth projective variety $V$ over a finite field $k$ of characteristic $p$ asserts that 
the image of the cycle class map
\[
\cl^{i}\otimes \Q_{l}: CH^{i}(V_{\overline{k}})\otimes \Q_{l}\rightarrow H^{2i}_{\et}(V_{\overline{k}},\Q_{l}(i))
\]
generates the subspace of classes in $H^{2i}_{\et}(V_{\overline{k}},\Q_{l}(i))$ fixed by some open subgroup of $\Gal(\overline{k}/k)$
for any prime number $l\neq p$.
The integral Tate conjecture is an integral analogue of the Tate conjecture (with $\Z_{l}$ instead of $\Q_{l}$).

\begin{prop}\label{Tate}
Let $X$ be a smooth projective variety over $\overline{\F}_{p}$ with fibration $X\rightarrow C$ over a smooth projective curve $C$.
Assume that
\begin{enumerate}
\item the generic fiber $X_{\eta}$ is smooth with $\chi(\O_{X_{\eta}})=1$;
\item $b_{2}=\rho$ on $X$, where $b_{2}$ is the second Betti number and $\rho$ is the Neron-Severi rank;
\item the Tate conjecture holds in degree $2$ on surfaces over finite fields of characteristic $p$.
\end{enumerate}
Then the fibration $X\rightarrow C$ admits multi-sections whose degrees over the base $C$ add up to a power of $p$.
That is, the index $I(X_{\eta})$ is a power of $p$, where $X_{\eta}$ is the generic fiber.
\end{prop}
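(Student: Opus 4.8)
The plan is to show that $I(X_{\eta})$ is prime to $\ell$ for every prime $\ell\neq p$; as these exhaust the primes different from $p$, this forces $I(X_{\eta})$ to be a power of $p$. We may assume $\dim X=3$ with surface fibres, the case to which hypothesis (3) is adapted. Fix $\ell\neq p$. A multi-section of degree $n$ over $C$ is a $1$-cycle $Z$ with $Z\cdot F=n$, where $F$ is the class of a fibre, so it suffices to exhibit an algebraic $1$-cycle $Z$ with $Z\cdot F$ prime to $\ell$. Intersection with the fibre class factors through cohomology as
\[
\CH^{2}(X)\otimes\Z_{\ell} \xrightarrow{\ \cl\ } H^{4}_{\et}(X,\Z_{\ell}(2)) \xrightarrow{\ \cup[F]\ } H^{6}_{\et}(X,\Z_{\ell}(3))=\Z_{\ell},
\]
and first I would reduce the whole statement to the surjectivity of this composite onto $\Z_{\ell}$. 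The second arrow is already surjective: by hypothesis (1) the Riemann--Roch argument recalled in the introduction (valid over $\overline{\F}_{p}((t))$ as well) shows the fibration has no multiple fibre, so $[F]$ is a primitive class and, by Poincaré duality, $\cup[F]$ carries $H^{4}$ onto $\Z_{\ell}$. Everything thus reduces to the surjectivity of $\cl$, i.e.\ to the integral Tate conjecture for $1$-cycles on the threefold $X$ at the prime $\ell$.

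The rational form of this is automatic from hypothesis (2). Indeed (2) says the map $\NS(X)\otimes\Q_{\ell}\to H^{2}_{\et}(X,\Q_{\ell}(1))$ is surjective, so every degree-$2$ class is a combination of divisors; composing with the hard Lefschetz isomorphism (Deligne)
\[
\cup\, h\colon H^{2}_{\et}(X,\Q_{\ell}(1))\isoto H^{4}_{\et}(X,\Q_{\ell}(2)),
\]
every class in $H^{4}_{\et}(X,\Q_{\ell}(2))$ becomes a combination of products $D\cdot h$ of divisors, hence algebraic and in particular a Tate class. Thus $H^{4}_{\et}(X,\Q_{\ell}(2))$ is spanned by algebraic (a fortiori Tate) classes, the cycle class map is rationally surjective, and the $\ell$-part of the index is finite. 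What remains is purely integral: to promote this to surjectivity onto $H^{4}_{\et}(X,\Z_{\ell}(2))$, i.e.\ to kill the finite $\ell$-adic defect of the integral Tate conjecture in this degree.

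For this I would invoke the theorem of Schoen \cite{Sch}: granting the Tate conjecture for divisors on surfaces over finite fields --- our hypothesis (3) --- the integral Tate conjecture for $1$-cycles holds on every smooth projective threefold over $\overline{\F}_{p}$ away from $p$. Applied to $X$, this makes $\cl\colon \CH^{2}(X)\otimes\Z_{\ell}\to H^{4}_{\et}(X,\Z_{\ell}(2))$ surject onto the subgroup of integral Tate classes; by the previous paragraph this subgroup has full rank, and it contains the class dual to $[F]$ (the pairing $H^{2}\times H^{4}\to\Z_{\ell}$ being Galois-equivariant and $[F]$ being an algebraic, hence Tate, primitive class). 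Combined with the surjectivity of $\cup[F]$, this produces an algebraic $1$-cycle $Z$ with $Z\cdot F$ a unit in $\Z_{\ell}$, that is, a multi-section of degree prime to $\ell$. Letting $\ell$ vary over all primes different from $p$ then yields the proposition.

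The genuine difficulty lies entirely in the last paragraph: the passage from rational to integral algebraicity of $1$-cycles is deep, and is exactly what is imported from Schoen's reduction. The delicate prime is $\ell=2$, since the geometric generic fibre, being an Enriques surface, carries $2$-torsion in $H^{3}$; propagated to $H^{4}$ of the total space through the Leray filtration of $X\to C$, it is this torsion that could obstruct an integral dual to $[F]$. Controlling it is precisely where the Tate conjecture for surfaces is consumed, and the whole argument is the arithmetic analogue of the complex picture of Colliot-Th\'el\`ene--Voisin tying together the index, the defect of the integral Hodge conjecture in degree $2d-2$, and degree-$3$ unramified cohomology, with the Tate conjecture now playing over $\overline{\F}_{p}$ the role of the Lefschetz $(1,1)$-theorem.
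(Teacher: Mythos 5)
Your proposal follows essentially the same route as the paper's proof: hypothesis (1) together with Riemann--Roch and Poincar\'e duality gives surjectivity of the degree map $H^{2d-2}(X,\Z_{\ell}(d-1))\to \Z_{\ell}$ (the paper cites \cite[Proposition 7.3]{CTV} for this); Schoen's theorem converts hypothesis (3) into the integral Tate conjecture for $1$-cycles; and hypothesis (2) upgrades that to surjectivity of the cycle class map onto all of $H^{2d-2}(X,\Z_{\ell}(d-1))$, for which the paper cites \cite[Corollary 5.9]{CTSca}. But two steps in your write-up need repair. First, the opening move ``we may assume $\dim X=3$'' is unjustified, and no such reduction exists: the proposition concerns fibrations of arbitrary dimension (and is used that way, via Corollary \ref{Tate'}, for the higher-dimensional families of Theorem \ref{t1'}). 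Fortunately it is also unnecessary: Schoen's theorem \cite{Scho} (note: your key points to Schreieder; Schoen is \cite{Scho}) concerns $1$-cycles on smooth projective varieties of \emph{any} dimension over $\overline{\F}_{p}$, and your argument goes through verbatim with $\cup\, h$ replaced by $\cup\, h^{d-2}$ and $H^{4},H^{6}$ by $H^{2d-2},H^{2d}$.

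The genuine gap is in your final paragraph. Schoen's theorem identifies the image of $\cl$ with the group of \emph{integral Tate classes}; to produce a $1$-cycle whose degree against $[F]$ is an $\ell$-adic unit you must show this group contains a class pairing to a unit with $[F]$. ``Full rank'' is not enough --- a full-rank subgroup of $H^{2d-2}$ can perfectly well pair with $[F]$ into $\ell\Z_{\ell}$ --- and your parenthetical justification (Galois-equivariance of the pairing plus $[F]$ being Tate) proves nothing here: equivariance only says evaluation against $[F]$ carries Tate classes into $\Z_{\ell}$, which is vacuous since the Galois action on the target is trivial. What is actually true, and what \cite[Corollary 5.9]{CTSca} packages, is that under $b_{2}=\rho$ \emph{every} class of $H^{2d-2}(X,\Z_{\ell}(d-1))$ is Tate: choosing a model over a finite field, $b_{2}=\rho$ and hard Lefschetz show that a single open subgroup of Galois fixes $H^{2d-2}\otimes\Q_{\ell}$, hence fixes the lattice modulo torsion, while the finite torsion subgroup is fixed by an open subgroup by continuity; so $\cl$ surjects onto the full group $H^{2d-2}(X,\Z_{\ell}(d-1))$ and your conclusion follows from the surjectivity of $\cup[F]$. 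A smaller imprecision in the same spirit: in characteristic $p$ the Riemann--Roch argument excludes divisibility of $[F]$ (modulo torsion) only by integers prime to $p$ --- multiple fibres of $p$-power multiplicity are not ruled out, which is precisely why the conclusion is an index that is a power of $p$ rather than $1$ --- so your phrase ``no multiple fibre'' overstates what is proved, even though for fixed $\ell\neq p$ the use you make of it is correct.
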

\begin{rem}
A similar assertion was proved by Colliot-Th\'el\`ene and Szamuely \cite[Theorem 6.1]{CTS}, 
where, among other things, the torsion-freeness of the Picard group $\Pic(X_{\overline{\eta}})$ of the geometric generic fiber $X_{\overline{\eta}}$ is assumed.
\end{rem}
\begin{proof}[Proof of Proposition \ref{Tate}]
Let $X\rightarrow C$ be a fibration as in the statement and $d=\dim X$.
Under the assumption (1), the Riemann-Roch formula together with the Poincar\'e duality shows that the push-forward homomorphism
\[
H^{2d-2}(X,\Z_{l}(d-1))\rightarrow H^{0}(C,\Z_{l})=\Z_{l}
\]
is surjective for any prime $l\neq p$
(the arguments are analogous to the proofs of Proposition \ref{app:prop:divisiblebyn} and Corollary \ref{app:cor:existencealpha} due to Wittenberg).

On the other hand, if $b_{2}=\rho$ on $X$, the cokernel of the cycle class map
\[
\cl^{2d-2}\otimes \Z_{l}\colon CH^{2d-2}(X)\otimes \Z_{l}\rightarrow H^{2d-2}(X,\Z_{l}(d-1))
\]
is finite by the hard Lefschetz theorem due to Deligne.
If we further assume that the Tate conjecture holds in degree $2$ on surfaces over finite fields of characteristic $p$,
then the integral Tate conjecture holds in degree $2d-2$ on $X$
(viewed as the base extension of a smooth projective variety over a finite field of characteristic $p$),
according to a theorem of Schoen \cite[Theorem 0.5]{Scho}.
This implies that the cokernel of $\cl^{2d-2}\otimes \Z_{l}$  is torsion-free, hence $\cl^{2d-2}\otimes \Z_{l}$ is surjective.
Combined with the argument in the previous paragraph, the statement now follows.
\end{proof}

\begin{cor}\label{Tate'}
Let $X\rightarrow C$ be a one-parameter family of $\O$-acyclic varieties over $\overline{\Q}$.
Assume that the Tate conjecture holds in degree $2$ on surfaces over finite fields.
Then the reduction $X_{p}\rightarrow C_{p}$ over $\overline{\F}_{p}$ admits multi-sections with coprime degrees over the base $C_{p}$ for any large prime number $p$.
That is, $I((X_{p})_{\eta})=1$, where $(X_{p})_{\eta}$ is the generic fiber.
\end{cor}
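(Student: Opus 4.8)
The plan is to apply Proposition \ref{Tate} to the reduction $X_p\to C_p$ in order to see that $I((X_p)_\eta)$ is a power of $p$, and then, in the opposite direction, to bound this index by the fixed characteristic-zero index by specializing $0$-cycles. I would first spread $X\to C$ out to a flat proper family $\mathcal{X}\to\mathcal{C}$ over $\Spec\O_{K}[1/N]$ for a suitable number field $K$ and integer $N$, so that for every sufficiently large prime $p$ (and a place of $K$ above it) the reduction $X_p\to C_p$ is a fibration of smooth projective varieties, with $X_p$ the base change to $\overline{\F}_p$ of a smooth projective variety over a finite field. Then I would verify the three hypotheses of Proposition \ref{Tate}. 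Hypothesis (1) holds because $\chi(\O)$ is locally constant in the flat proper family and the geometric generic fiber is $\O$-acyclic, so $\chi(\O_{(X_p)_\eta})=1$. For hypothesis (2), the $\O$-acyclicity of the fibers gives $h^{j,0}=0$ on each fiber for $j>0$ by Hodge symmetry, and a standard Leray computation over the base curve then yields $h^{2,0}(X_{\overline{\Q}})=0$; by the Lefschetz $(1,1)$-theorem this gives $b_2=\rho$ on $X_{\overline{\Q}}$. Since $b_2(X_p)=b_2(X_{\overline{\Q}})$ by smooth proper base change and the Néron--Severi specialization map is injective, one obtains $\rho(X_p)\ge\rho(X_{\overline{\Q}})=b_2(X_{\overline{\Q}})=b_2(X_p)$, which together with $\rho\le b_2$ forces $b_2=\rho$ on $X_p$. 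Hypothesis (3) is the assumed Tate conjecture, so Proposition \ref{Tate} applies and $I((X_p)_\eta)$ is a power of $p$.

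It then remains to eliminate a nontrivial $p$-power. Let $I_0=I(X_{\overline{\Q}(C)})$ denote the index of the generic fiber in characteristic zero, a fixed positive integer. Every $0$-cycle of degree $d$ on $X_{\overline{\Q}(C)}$ is the restriction to the generic fiber of a horizontal $1$-cycle on $X$, i.e. a multi-section of $X\to C$ of degree $d$ over the base; spreading this multi-section out and reducing modulo $p$ produces a multi-section of $X_p\to C_p$ of the same degree, since properness and good reduction preserve the degree over the base. Hence $I((X_p)_\eta)\mid I_0$. For $p>I_0$ we have $p\nmid I_0$, so a power of $p$ dividing $I_0$ is trivial, and therefore $I((X_p)_\eta)=1$, which is exactly the assertion that $X_p\to C_p$ has multi-sections of coprime degree.

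The step I expect to be the main obstacle is the verification of hypothesis (2), namely $b_2=\rho$ on the reduction: the inequality $\rho\le b_2$ is automatic, but the matching lower bound rests on combining the injectivity of the Néron--Severi specialization map with the vanishing $h^{2,0}(X_{\overline{\Q}})=0$ extracted from the $\O$-acyclicity of the fibers. Once this is in place, the argument hinges on the tension between the two divisibilities---$I((X_p)_\eta)$ is a power of $p$ by Proposition \ref{Tate}, while it divides the fixed integer $I_0$ by specializing characteristic-zero cycles---which together force the index to equal $1$ for all large $p$.
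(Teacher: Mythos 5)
Your proposal is correct and follows essentially the same route as the paper: verify the hypotheses of Proposition \ref{Tate} for the reduction (the paper gets $b_2=\rho$ on $X$ from \cite[Proposition 7.3]{CTV} and passes it to good reductions by specialization, exactly as you do in more detail), conclude that $I((X_p)_\eta)$ is a power of $p$, and then kill that power by spreading out $1$-cycles from characteristic zero, whose degrees over the base are independent of $p$. Your write-up merely makes explicit the spreading-out and the divisibility bookkeeping that the paper's two-sentence proof leaves implicit.
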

\begin{proof}
We note that the $\O$-acyclicity of fibers of the family $X\rightarrow C$ implies $b_{2}=\rho$ on $X$ by \cite[Proposition 7.3]{CTV}, 
thus we also have $b_{2}=\rho$ on the good reductions of $X$ by specialization.
Now the statement is immediate from Proposition \ref{Tate} by observing that there exist $1$-cycles on $X_{p}$ obtained by spreading out $1$-cycles on $X$ over valuations rings inside $\overline{\Q}$, whose degrees over the base $C_{p}$ do not depend on $p$.
\end{proof}

\section{Failure of the Hasse principle and the reciprocity obstruction}\label{FHPRO}

The reciprocity obstruction to the Hasse principle for $0$-cycles of degree $1$ on a smooth projective geometrically connected variety $Z$ over the function field $F=\C(C)$ of a complex curve $C$ was defined and pointed out to the authors by Colliot-Th\'el\`ene (see also \cite[Section 5]{CTG}).
We explain the construction in the following.
We will assume that $H^{1}_{\et}(Z_{\overline{F}},\Z/2)=\Z/2$ for simplicity.

The Leray spectral sequence for the \'etale sheaf $\Z/2$ and the morphism $Z\rightarrow \Spec F$ yields a short exact sequence
\[
0\rightarrow H^{1}_{\et}(F,\Z/2)\rightarrow H^{1}_{\et}(Z,\Z/2)\rightarrow H^{1}_{\et}(Z_{\overline{F}},\Z/2)\rightarrow 0.
\]
Note that the Galois group $\Gal(\overline{F}/F)$ acts trivially on $H^{1}_{\et}(Z_{\overline{F}},\Z/2)=\Z/2$.
We then choose a lift $\xi \in H^{1}_{\et}(Z,\Z/2)$ of the non-trivial class in $H^{1}_{\et}(Z_{\overline{F}},\Z/2)=\Z/2$.

The evaluation pairing 
\[
Z(F)\times H^{1}_{\et}(Z, \Z/2)\rightarrow H^{1}_{\et}(F,\Z/2)
\]
 extends to an evaluation pairing on the Chow group of $0$-cycles
\[
CH_{0}(Z)\times H^{1}_{\et}(Z,\Z/2)\rightarrow H^{1}_{\et}(F,\Z/2).
\]
Thus we get the evaluation map of $\xi$
\[
CH_{0}(Z)\rightarrow H^{1}_{\et}(F,\Z/2).
\]
Similarly, we get the local evaluation map of $\xi$
\[
CH_{0}(Z_{F_{p}})\rightarrow H^{1}_{\et}(F_{p},Z/2)=\Z/2
\]
for any $p\in C$, where $F_{p}\cong \C((t))$ is the completion of $F$ at $p$.
The local evaluation maps are identically zero for all but finitely many $p\in C$ by an argument of good reduction.

The diagonal embedding 
\[F\hookrightarrow \prod_{p\in C} F_{p}
\]
yields a complex
\[
H^{1}_{\et}(F,\Z/2)\rightarrow \bigoplus_{p\in C} H^{1}_{\et}(F_{p},\Z/2) \rightarrow \Z/2
\]
that is
\[
F^{\ast}/F^{\ast 2}\rightarrow \bigoplus_{p\in C} \Z/2 \rightarrow \Z/2,
\]
where the first map is induced by the divisor map and the second map is the summation map.
Then it follows that the image of the diagonal map
\[
CH_{0}(Z)\rightarrow \prod_{p\in C}CH_{0}(Z_{F_{p}})
\]
is contained in the kernel of the sum of the local evaluations
\[
\theta \colon \prod_{p\in C} CH_{0}(Z_{F_{p}})\rightarrow \Z/2.
\]

\begin{prop}[Reciprocity obstruction]\label{reciprocity}
If for each family $\left\{\alpha_{p}\right\}_{p\in C}$ of $0$-cycles of degree $1$, we have $\theta(\left\{\alpha_{p}\right\})=1\in \Z/2$, then there is no $0$-cycle of degree $1$ on $Z$.
\end{prop}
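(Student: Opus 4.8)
The plan is to argue by contradiction, using only the reciprocity complex
\[
F^{\ast}/F^{\ast 2}\rightarrow \bigoplus_{p\in C} \Z/2 \rightarrow \Z/2
\]
and the compatibility of the global and local evaluation maps of $\xi$ that were set up immediately before the statement. Suppose, contrary to the conclusion, that $Z$ carries a $0$-cycle $z$ of degree $1$. I would first observe that base change along the inclusion $F\hookrightarrow F_{p}$ sends $z$ to a $0$-cycle $z_{p}$ on $Z_{F_{p}}$, and that degree is preserved under this flat pullback, so each $z_{p}$ has degree $1$. Hence the diagonal image $\{z_{p}\}_{p\in C}$ of $z$ under
\[
CH_{0}(Z)\rightarrow \prod_{p\in C}CH_{0}(Z_{F_{p}})
\]
is a genuine family of local $0$-cycles of degree $1$, to which the hypothesis of the proposition applies.

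Next I would invoke the reciprocity statement already derived. Write $e(z)\in H^{1}_{\et}(F,\Z/2)=F^{\ast}/F^{\ast 2}$ for the global evaluation of $\xi$ at $z$. By the compatibility of the global and local evaluation maps with the restrictions $H^{1}_{\et}(F,\Z/2)\rightarrow H^{1}_{\et}(F_{p},\Z/2)$, the image of $e(z)$ under the divisor map $F^{\ast}/F^{\ast 2}\rightarrow \bigoplus_{p\in C}\Z/2$ is precisely the tuple of local evaluations of the $z_{p}$. Applying the summation map then yields $\theta(\{z_{p}\})$, which therefore equals the image of $e(z)$ under the composite of the two arrows in the complex above. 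Since that composite vanishes — this is the reciprocity law on $C$, namely that the total order of a rational function is even modulo $2$ — I conclude $\theta(\{z_{p}\})=0$.

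Finally I would extract the contradiction: the hypothesis asserts $\theta(\{\alpha_{p}\})=1$ for \emph{every} family of local cycles of degree $1$, in particular for the family $\{z_{p}\}$ coming from $z$, so $\theta(\{z_{p}\})=1$. As $0\neq 1$ in $\Z/2$, no such $z$ can exist, which is exactly the claim.

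The argument is essentially formal once the preceding construction is in place, so I do not expect a serious obstacle; the crux is simply to use the already-established reciprocity correctly, and the only points needing care are bookkeeping. One should make sure that the local evaluation maps vanish for all but finitely many $p$ (already noted via good reduction), so that $\theta$ is defined on the image of the diagonal as an honest finite sum, and that the evaluation pairing genuinely factors through rational equivalence, so that it is the class of $z$ in $CH_{0}(Z)$ that is being evaluated rather than a chosen representative. Both are guaranteed by the setup preceding the proposition, and it is precisely the compatibility of the global and local pairings with restriction that makes the reciprocity law bite.
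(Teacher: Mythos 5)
Your argument is correct and is essentially the paper's own: the text preceding the proposition establishes exactly that the diagonal image of $CH_{0}(Z)$ in $\prod_{p\in C}CH_{0}(Z_{F_{p}})$ lies in the kernel of $\theta$ (via the reciprocity complex and the compatibility of global and local evaluations), and the proposition is then the immediate contrapositive, which is the contradiction you spell out. Your added care about finiteness of the nonzero local terms and factoring through rational equivalence matches the paper's setup, so there is nothing to flag.
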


As a consequence of our construction in Section \ref{PMT}, we prove that the failure of the Hasse principle for $0$-cycles of degree $1$ on an Enriques surface over $\C(\P^{1})$ cannot always be accounted for by the reciprocity obstruction.

\begin{thm}\label{HPRO}
Let $X_{\eta}$ be the generic fiber of the family $X\rightarrow \P^{1}$ of Enriques surfaces of Theorem \ref{t1}.
Then the Hasse principle fails for $0$-cycles of degree $1$ on $X_{\eta}$, while the assumption of Proposition \ref{reciprocity} is not satisfied.
\end{thm}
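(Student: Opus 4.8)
The plan is to establish the two assertions separately. For the failure of the Hasse principle, I would combine Theorem \ref{t1} with the local Riemann--Roch observation already noted in the introduction. By Theorem \ref{t1}, the index $I(X_{\eta})$ is even, so in particular there is no $0$-cycle of degree $1$ on $X_{\eta}$. On the other hand, for each closed point $p\in \P^{1}$ the completion $F_{p}\cong \C((t))$ is a field over which the Enriques surface $X_{F_{p}}$ has index $1$: this is precisely the content of the local indices-one-everywhere statement cited in the introduction (the family has no multiple fibers, so by Riemann--Roch each local fiber admits a $0$-cycle of degree $1$). Hence each $X_{\eta,F_{p}}$ carries a $0$-cycle of degree $1$, and the Hasse principle fails.

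The more substantial part is to show that the reciprocity obstruction nonetheless vanishes, i.e.\ that the hypothesis of Proposition \ref{reciprocity} is \emph{not} met. Here I would use the description of $H^{1}_{\et}$ classes recalled in Section \ref{FHPRO}, noting first that $H^{1}_{\et}(X_{\overline{\eta}},\Z/2)=\Z/2$ for the geometric generic fiber of an Enriques surface, so the setup of that section applies. I must exhibit a single collection $\{\alpha_{p}\}_{p\in \P^{1}}$ of local $0$-cycles of degree $1$ with $\theta(\{\alpha_{p}\})=0$. The idea is to analyze the local evaluation maps $CH_{0}(X_{\eta,F_{p}})\to \Z/2$ attached to the chosen lift $\xi\in H^{1}_{\et}(X_{\eta},\Z/2)$. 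Since these vanish identically for all but finitely many $p$, it suffices to understand the finitely many places where $\xi$ is locally ramified or where the evaluation is nonzero, and to arrange the local degree-$1$ cycles there so that the finitely many contributions to the sum cancel in $\Z/2$. Concretely, at each bad place I would look for two local $0$-cycles of degree $1$ whose evaluations under $\xi$ differ; the existence of such a pair would let me flip the local contribution and hence adjust the total sum $\theta$ to be $0$. The point is that, unlike in the number-field Brauer--Manin situation, the local degrees here are not rigidly constrained, so the local evaluation at a given place is not a constant on all degree-$1$ cycles.

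The main obstacle I anticipate is controlling the local evaluation map at the finitely many ramified places and verifying that the local image of the evaluation is genuinely not locally constant on degree-$1$ cycles. This requires a concrete understanding of the class $\xi$ restricted to $X_{\eta,F_{p}}$ and of how it pairs with $0$-cycles; the Enriques double-cover structure from Section \ref{FES} (the étale double cover $Y\to X'$ realizing the canonical $\Z/2$) is the natural source for $\xi$, and I would use it to compute the pairing geometrically. The subtlety is that the \emph{global} evaluation of $\xi$ is forced by reciprocity to land in the kernel of the summation map, yet this does not obstruct degree-$1$ cycles precisely when the local evaluations can be independently prescribed; establishing this independence is the crux.

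Once the local flexibility is in hand, I would assemble the family $\{\alpha_{p}\}$: choose degree-$1$ local cycles freely at the good places, and at the bad places select from the available pairs so that $\sum_{p}\mathrm{ev}_{p}(\xi,\alpha_{p})=0$ in $\Z/2$. This realizes a family with $\theta(\{\alpha_{p}\})=0$, showing the hypothesis of Proposition \ref{reciprocity} fails and completing the proof. I would remark that the appendix of Wittenberg promotes this to a general statement, but for the theorem as stated the explicit construction above suffices.
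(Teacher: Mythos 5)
Your proposal follows the paper's proof in both structure and substance: the failure of the Hasse principle is obtained exactly as you say (Theorem \ref{t1} kills global degree-$1$ cycles, while the absence of multiple fibers plus Riemann--Roch gives local degree-$1$ cycles at every $p\in\P^{1}$), and for the vanishing of the reciprocity obstruction the paper likewise takes $\xi$ to be the class of the \'etale double cover $Y^{\circ}\rightarrow X^{\circ}$ (legitimate since changing the lift alters evaluations only by classes from $H^{1}_{\et}(F,\Z/2)$, whose local invariants sum to zero) and then flips local contributions at the bad places. The one caveat is that the step you explicitly defer as ``the crux'' --- showing the local evaluation map is not constant on degree-$1$ cycles at the bad places --- is where all the real work of the paper lies, so your write-up is a correct plan rather than a complete proof at that point. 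For the record, the paper settles it concretely: the bad places are the images $p_{i,j}$ of the planes $E_{i,j}$; over $\Spec\widehat{\O}_{\P^{1},p_{i,j}}$ the special fiber is reduced with two components $E_{i,j}$ and a residual component $R_{i,j}$, so Hensel's lemma produces sections $S_{1}$ and $S_{2}$ meeting $E_{i,j}$ and $R_{i,j}$ transversally at one point each; since the double cover is branched exactly over $E_{i,j}$, the preimage of $S_{1}$ is an irreducible degree-$2$ multi-section (evaluation $1\in\Z/2$) while the preimage of $S_{2}$ splits into two disjoint sections (evaluation $0$). This gives the surjectivity of the local evaluation on rational points, hence on degree-$1$ cycles, which is precisely the ``local flexibility'' your argument requires, and the rest of your assembly of the family $\{\alpha_{p}\}$ with $\theta(\{\alpha_{p}\})=0$ matches the paper.
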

\begin{rem}
In fact, a direct computation shows that $X_{\eta}$ has rational points everywhere locally.
Hence the Hasse principle already fails for rational points on $X_{\eta}$.
The proof in the following also shows that there is no reciprocity obstruction to the Hasse principle for rational points on $X_{\eta}$.
Therefore it follows that the reciprocity obstruction to the Hasse principle for rational points on an Enriques surface over $\C(\P^{1})$ is not the only obstruction.
\end{rem}
\begin{proof}[Proof of Theorem \ref{HPRO}]
Let $F=\C(\P^{1})$.
Theorem \ref{t1} shows that there is no $0$-cycle of degree $1$ on $X_{F}$.
On the other hand, it is automatic from the $\O$-acyclicity of Enriques surfaces and the Riemann-Roch theorem that the family $X\rightarrow \P^{1}$ has no multiple fiber (in fact this is easy to see directly from the defining equations).
It then follows from Hensel's lemma that there is a $0$-cycle of degree $1$ on $X_{F_{p}}$ for any $p\in \P^{1}$.
Therefore the Hasse principle fails for $0$-cycles of degree $1$ on $X_{F}$.

By choosing a lift $\xi \in H^{1}_{\et}(X_{F},\Z/2)$ of the non-zero class in $H^{1}_{\et}(X_{\overline{F}},\Z/2)=\Z/2$, we obtain the map $\theta$ in Proposition \ref{reciprocity}.
To see that $X_{F}$ does not satisfy the assumption of Proposition \ref{reciprocity}, it is enough to verify the following:
for each $i$ and $j$,
if $p_{i,j}\in \P^{1}$ is the image of $E_{i,j}$, then the local evaluation map
\[
CH_{0}(X_{F_{p_{i,j}}})\rightarrow H^{1}_{\et}(F_{p_{i,j}},\Z/2)=\Z/2 
\]
restricts to a surjection on $0$-cycles of degree $1$;
this will then provide a family $\left\{\alpha_{p}\right\}_{p\in C}$ of $0$-cycles of degree $1$ such that $\theta(\left\{\alpha_{p}\right\})=0 \in \Z/2$.

Recall that by construction in Section \ref{FES}, $X$ admits a natural double cover $Y\rightarrow X$ over $\P^{1}$
and the cover is ramified along $\bigcup F_{i,j}$ and branched over $\bigcup E_{i,j}$.
Then one can in fact assume that $\xi$ is given by the \'etale double cover $Y^{\circ}\rightarrow X^{\circ}$, where $X^{\circ}=X\setminus \bigcup E_{i,j}$ and $Y^{\circ}=Y\setminus \bigcup F_{i,j}$,
since evaluation maps only differ by classes in $H^{1}_{\et}(F,\Z/2)$.

For each $i$ and $j$, working locally around $p_{i,j}$,
we consider the base change of the Enriques fibration $X\rightarrow \P^{1}$
\[
X\times_{\P^{1}}\Spec \widehat{\O}_{\P^{1},p_{i,j}}\rightarrow \Spec \widehat{\O}_{\P^{1}, p_{i,j}},
\] 
where $\widehat{\O}_{\P^{1},p_{i,j}}$ is the completion of the local ring $\O_{\P^{1},p_{i,j}}$.
One can compute that the special fiber is reduced and consists of $E_{i,j}$ and a residual component $R_{i,j}$.
Then, by Hensel's lemma, there is a section $S_{1}$ (resp. $S_{2}$) which intersects transversally with $E_{i,j}$ (resp. $R_{i,j}$) at one point.
Now we consider the double cover
\[
Y\times_{\P^{1}}\Spec \widehat{\O}_{\P^{1},p_{i,j}}\rightarrow X\times_{\P^{1}}\Spec \widehat{\O}_{\P^{1},p_{i,j}},
\]
whose branched locus is $E_{i,j}$.
Then it is straightforward to see that the inverse image of $S_{1}$ gives degree $2$ integral multi-section, while that of $S_{2}$ splits into two disjoint sections.
Therefore the $F_{p_{i,j}}$-rational points of $X_{F_{p_{i,j}}}$ corresponding to the sections $S_{1}$ and $S_{2}$ 
take values $1$ and $0$ in $\Z/2$ respectively
under the local evaluation map.
This concludes that $X_{F}$ does not satisfy the assumption of Proposition \ref{reciprocity}, hence the proof of the theorem.
\end{proof}

\section{Defect of the integral Hodge conjecture in degree $4$ and degree $3$ unramified cohomology with $\Z/2$ coefficients}\label{DIHCUC}

\begin{thm}\label{t2}
Let $X$ be the total space of the family of Enriques surfaces of Theorem \ref{t1}.
Then we have $Z^{4}(X)[2]=(\Z/2)^{46}$.
\end{thm}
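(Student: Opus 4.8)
The plan is to compute $Z^4(X)[2] = Hdg^4(X,\Z)/H^4_{\alg}(X,\Z)$ restricted to its $2$-torsion by bringing in the theorem of Colliot-Th\'el\`ene and Voisin \cite[Theorem 3.9]{CTV}, which for a smooth projective threefold $X$ with $CH_0(X)=\Z$ identifies the defect of the integral Hodge conjecture in degree $4$ with the degree $3$ unramified cohomology. Since Lemma \ref{l1}(1) gives $CH_0(X)=\Z$, this connects $Z^4(X)$ to $H^3_{\nr}(X,\Q/\Z)$, and the $2$-torsion part will correspond to $H^3_{\nr}(X,\Z/2)$ via the universal coefficient comparison. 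Thus computing $Z^4(X)[2]$ reduces to computing the unramified cohomology, which is the content of Corollary \ref{co1} referenced in Theorem \ref{T2}.

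First I would set up the cohomological framework: because $X$ is simply connected with torsion-free cohomology (Lemma \ref{l1}(4)) and we understand $H^2(X,\Z)$ explicitly via Lemma \ref{l2}, the relevant Hodge-theoretic data is controlled by the primitive transcendental part. The strategy is to exploit the double cover structure $Y^{\circ}\to X^{\circ}$ and the Enriques fibration $X\to \P^1$. Following the Bloch--Ogus framework, I would analyze $H^3_{\nr}(X,\Z/2)$ through the Bloch--Ogus complex and relate it to the $2$-torsion coming from the $\Z/2$-cohomology of the fibers. The geometry established in Section \ref{FES}, namely the $48$ exceptional planes $E_{i,j}$ and their intersection relations recorded in Lemma \ref{l2}, should produce a finite-dimensional $\Z/2$-vector space whose rank must be pinned down precisely.

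The key computation is to determine the exact rank $46$. I would expect this to come from a careful bookkeeping of $2$-torsion classes: there are $48$ exceptional divisors $E_{i,j}$ split as $24+24$, subject to the single relation in Lemma \ref{l2} that $-2[H_1]+\sum_j [E_{1,j}] = -2[H_2]+\sum_j[E_{2,j}]$, together with further relations of parity type coming from the congruences $\alpha\cdot E_{1,1}\equiv\cdots\equiv \alpha\cdot E_{1,24} \bmod 2$ established in the proof of Theorem \ref{t1}. The monodromy action by the full symmetric group $S_{24}$ on each set of $24$ planes (the surjectivity proven in Section \ref{PMT}) should force the $\Z/2$-span to be the standard permutation module modulo the all-ones vector and modulo the diagonal identification of the two groups of $24$, yielding something close to $(23+23)$ dimensions, with the remaining contributions adjusted by the torsion class in the N\'eron--Severi group of the geometric generic fiber (flagged in the remark after Theorem \ref{T2}) to reach exactly $46$.

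The main obstacle will be the precise arithmetic of the rank: showing that no spurious classes survive and that none of the expected classes are accidentally algebraic or trivial in unramified cohomology. This requires controlling the image $H^4_{\alg}(X,\Z)$ of the cycle class map against the full group of integral Hodge classes, and verifying that the $2$-torsion contributions from the monodromy-invariant part of the exceptional locus are genuinely nonzero and independent in $H^3_{\nr}(X,\Z/2)$. I would handle this by combining the explicit description of $H^2(X,\Z)$ from Lemma \ref{l2} with a Leray spectral sequence analysis for the fibration $X\to \P^1$, tracking how the $2$-torsion in the N\'eron--Severi lattice of the Enriques fibers globalizes, and then matching the outcome against the Colliot-Th\'el\`ene--Voisin identification to conclude $Z^4(X)[2]=(\Z/2)^{46}$.
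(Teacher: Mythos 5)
Your proposal is circular as written. You reduce $Z^4(X)[2]$ to $H^3_{\nr}(X,\Z/2)$ via the Colliot-Th\'el\`ene--Voisin theorem and then invoke Corollary \ref{co1} for the value $(\Z/2)^{46}$ --- but in the paper Corollary \ref{co1} is itself \emph{deduced from} Theorem \ref{t2} by exactly that identification, applied in the opposite direction. So the only content left in your plan is an independent computation of $H^3_{\nr}(X,\Z/2)$, and the sketch you give for it (Bloch--Ogus complex, a Leray analysis of $X\to\P^1$, ``tracking how the $2$-torsion globalizes'') is never carried out and offers no concrete mechanism; unramified degree-$3$ cohomology of a threefold is precisely the kind of invariant that is not accessible by such fibration bookkeeping, which is why the CTV theorem is used here to \emph{compute} it from the cycle-theoretic side rather than the reverse.

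The paper's actual proof is a direct computation of the defect, and your proposal is missing its two pivots. First, by Lemma \ref{l1} the group $H^4(X,\Z)$ is free of rank $50$ with trivial Hodge structure, so every integral class is Hodge and $Z^4(X)[2]=\Ker\left(H^4_{\alg}(X,\Z)/2\rightarrow H^4(X,\Z/2)\right)$; the whole theorem thus reduces to showing that the mod-$2$ algebraic cohomology $H^{4}_{\alg}(X,\Z/2)$ has rank exactly $4$, giving $50-4=46$. Second, the upper bounds come from the congruences of Theorem \ref{t1}: after pushing forward to $H^{10}(\P^1\times\P_A,\Z/2)$ the algebraic image has rank at most $2$ by (\ref{c3}), and inside the vanishing part $H^4_{\van}(X,\Z/2)$ --- of rank $46$, identified via the double cover $f\colon Y\rightarrow X$, where $\Coker\left(f_*\colon H^4_{\van}(Y,\Z)\rightarrow H^4_{\van}(X,\Z)\right)=(\Z/2)^{46}$ --- the algebraic classes have rank at most $2$ by (\ref{c3}) together with the analogous congruence (\ref{c4}) for the $E_{2,j}$; the matching lower bounds are furnished by explicit classes, namely lines $l_1\subset E_{1,1}$, $l_2\subset E_{2,1}$ and the classes $C_1=(H_1)^2$, $C_2=(H_2)^2$. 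You correctly intuit that the degree congruences and the $S_{24}$ monodromy are the relevant inputs, and your ``$23+23$'' does echo the rank-$46$ vanishing cohomology spanned by the difference classes $c_{i,j_1,j_2}$; but your proposed accounting (permutation module modulo the all-ones vector, ``adjusted'' by the $2$-torsion in the N\'eron--Severi group of the generic fiber) is a heuristic rather than an argument: at no point do you bound or exhibit generators of $H^4_{\alg}(X,\Z/2)$ itself, which is where the theorem actually lives.
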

\begin{proof}
By Lemma \ref{l1}, the Hodge structure of $H^{4}(X,\Z)$ is trivial and $H^{4}(X,\Z)$ is free of rank $50$.
By the Tor long exact sequence, we have
\[
Z^{4}(X)[2]=\Ker(H^{4}_{\alg}(X,\Z)/2\rightarrow H^{4}(X,\Z/2)).
\]
We define 
\[
H^{4}_{\alg}(X,\Z/2)=\Image\left(\cl^{2}\otimes\Z/2\colon CH^{2}(X)/2\rightarrow H^{4}(X,\Z/2)\right).
\]
Since $H^{4}_{\alg}(X,\Z)/2=(\Z/2)^{50}$, we are reduced to proving that $H^{4}_{\alg}(X,\Z/2)=(\Z/2)^{4}$.

We first prove that
\[
\Image\left(CH^{2}(X)/2\xrightarrow{\cl\otimes \Z/2} H^{4}(X,\Z/2)\xrightarrow{(i_{X})_{*}} H^{10}(\P^{1}\times \P_{A},\Z/2)\right)=(\Z/2)^{2},
\]
where $i_{X}\colon X\rightarrow \P^{1}\times \P_{A}$ is the inclusion map.
The rank of the image is $\leq 2$ as a result of Theorem \ref{t1} and the congruence $(\ref{c3})$ in the proof.
Thus it suffices to find two linearly independent classes in the image.
It is easy to see that lines $l_{1}\subset E_{1,1}$ and $l_{2}\subset E_{2,1}$ give such classes.

We define
\[
H^{4}_{\van}(X,\Z/2)=\Ker\left((i_{X})_{*}\colon H^{4}(X,\Z/2)\rightarrow H^{10}(\P^{1}\times \P_{A},\Z/2)\right).
\]
We have $\rank H^{4}_{\van}(X,\Z/2)=46$.
Indeed, it is enough to observe that the push-forward homomorphism
\[
(i_{X})_{*}\colon H^{4}(X,\Z)\rightarrow H^{10}(\P^{1}\times \P_{A}, \Z)
\] 
is surjective, which follows from the fact that the pullback homomorphism
\[
(i_{X})^{*}\colon H^{2}(\P^{1}\times \P_{A},\Z)\rightarrow H^{2}(X,\Z)
\] 
is injective with torsion-free cokernel by Lemma \ref{l2}.
We prove that $H^{4}_{\van}(X,\Z/2)$ is generated by classes $c_{i,j_{1},j_{2}}\in H^{4}(X,\Z)$ ($i=1,2$, $1\leq j_{1}<j_{2}\leq 24$)
with intersection properties
\begin{eqnarray*}
c_{i,j_{1},j_{2}}\cdot E_{i',j'}=\delta_{i,i'}\cdot(\delta_{j_{1},j'}-\delta_{j_{2},j'}),\\
c_{i,j_{1},j_{2}}\cdot F=c_{i,j_{1},j_{2}}\cdot H_{1}=c_{i,j_{1},j_{2}}\cdot H_{2}=0.
\end{eqnarray*}
It is enough to show that 
\[H^{4}_{\van}(X,\Z)=\Ker\left((i_{X})_{*}\colon H^{4}(X,\Z)\rightarrow H^{10}(\P^{1}\times \P_{A},\Z)\right),\]
 which is of rank $46$, is generated by the above classes.
Let $i_{Y}\colon Y\rightarrow \P^{1}\times \P_{B}$ be the inclusion map
and let $$H^{4}_{\van}(Y,\Z)=\Ker\left((i_{Y})_{*}\colon H^{4}(Y,\Z)\rightarrow H^{10}(\P^{1}\times \P_{B},\Z)\right).$$
The group $H^{4}_{\van}(Y,\Z)$ has rank $46$.
Using Lemma \ref{l2},
it is straightforward to see that 
\[\Coker\left(f_{*}\colon H^{4}_{\van}(Y,\Z)\rightarrow H^{4}_{\van}(X,\Z)\right)=(\Z/2)^{46},
\] 
where $f\colon Y\rightarrow X$ is the natural map,
thus the push-forward homomorphism $f_{*}\colon H^{4}_{\van}(Y,\Z)\rightarrow H^{4}_{\van}(X,\Z)$ can be identified with the multiplication homomorphism $\Z^{46}\xrightarrow{\times 2}\Z^{46}$.
Now it is enough to observe that $H^{4}_{\van}(Y,\Z)$ is generated by classes $d_{i,j_{1},j_{2}}\in H^{4}(Y,\Z)$ ($i=1,2$, $1\leq j_{1}<j_{2}\leq 24$) with intersection properties
\begin{eqnarray*}
d_{i,j_{1},j_{2}}\cdot F_{i',j'}=\delta_{i,i'}\cdot(\delta_{j_{1},j'}-\delta_{j_{2},j'}),\\
d_{i,j_{1},j_{2}}\cdot F=d_{i,j_{1},j_{2}}\cdot H_{1}=d_{i,j_{1},j_{2}}\cdot H_{2}=0,
\end{eqnarray*}
which is immediate.

We prove that $H^{4}_{\alg}(X,\Z/2)\cap H_{\van}^{4}(X,\Z/2)=(\Z/2)^{2}$.
We note that we have the congruence (\ref{c3}) in the proof of Theorem \ref{t1}, and moreover, 
we may also assume a congruence
\begin{eqnarray}\label{c4}
\alpha\cdot E_{2,1}\equiv \cdots \equiv \alpha\cdot E_{2,24} \mod 2
\end{eqnarray}
for any $1$-cycle $\alpha$ on $X$.
Then the congruences (\ref{c3}) and (\ref{c4}) imply
\[\rank\left(H^{4}_{\alg}(X,\Z/2)\cap H_{\van}^{4}(X,\Z/2)\right)\leq 2.\]
Now it is enough to find two linearly independent classes in $H^{4}_{\alg}(X,\Z/2)\cap H_{\van}^{4}(X,\Z/2)$.
It is a simple matter to check that $C_{1}=(H_{1})^{2}$ and $C_{2}=(H_{2})^{2}$ indeed give such classes.

It follows that 
\[H^{4}_{\alg}(X,\Z/2)=\Z/2[l_{1}]\oplus \Z/2[l_{2}]\oplus \Z/2[C_{1}]\oplus \Z/2[C_{2}]=(\Z/2)^{4}.\]
The proof is complete.
\end{proof}

Let $n$ be a positive integer.
We recall that the degree $3$ unramified cohomology group $H^{3}_{\nr}(X,\Z/n)$ for a smooth projective variety $X$ is defined to be
\[
H^{3}_{\nr}(X,\Z/n)=H^{0}(X_{\Zar}, \mathcal{H}^{3}(\Z/n)),
\]
where $\mathcal{H}^{3}(\Z/n)$ is the Zariski sheaf associated to the presheaf $U\mapsto H^{3}(U,\Z/n)$ \cite{BlO}.
The group $H^{3}_{\nr}(X,\Z/n)$ is a stable birational invariant of smooth projective varieties
 \cite[Theorem 4.2]{BlO}.
As an application of the Bloch-Kato conjecture settled by Voevodsky,
it was proved by Colliot-Th\'el\`ene and Voisin \cite[Theorem 3.9]{CTV} that we have
\[
H^{3}_{\nr}(X,\Z/n)=Z^{4}(X)[n]
\]
if $CH_{0}(X)$ is supported on a surface.
This theorem, together with Lemma \ref{l1} (1) and Theorem \ref{t2}, implies:

\begin{cor}\label{co1}
Let $X$ be the total space of the family of Enriques surfaces of Theorem \ref{t1}.
Then we have $H^{3}_{\nr}(X,\Z/2)=(\Z/2)^{46}$.
\end{cor}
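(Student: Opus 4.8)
The plan is to deduce this corollary directly from the theorem of Colliot-Th\'el\`ene and Voisin \cite[Theorem 3.9]{CTV}, which identifies $H^{3}_{\nr}(X,\Z/n)$ with the $n$-torsion $Z^{4}(X)[n]$ of the defect of the integral Hodge conjecture in degree $4$, under the hypothesis that $CH_{0}(X)$ is supported on a surface. All the substantive work has already been carried out in the preceding sections; what remains is to verify this hypothesis and then assemble the two inputs.

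First I would check the hypothesis of \cite[Theorem 3.9]{CTV}. By Lemma \ref{l1}(1), the degree homomorphism $\deg\colon CH_{0}(X)\rightarrow \Z$ is an isomorphism, so $CH_{0}(X)=\Z$ is generated by the class of a single point; in particular $CH_{0}(X)$ is supported on any closed point, hence a fortiori on a surface, and the hypothesis holds. This is precisely the step where the result of Bloch--Kas--Lieberman \cite{BKL} on zero-cycles of Enriques surfaces enters, through Lemma \ref{l1}. With the hypothesis in hand, I would apply \cite[Theorem 3.9]{CTV} with $n=2$ to obtain the identification $H^{3}_{\nr}(X,\Z/2)=Z^{4}(X)[2]$.

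Finally, substituting the computation of Theorem \ref{t2}, namely $Z^{4}(X)[2]=(\Z/2)^{46}$, yields $H^{3}_{\nr}(X,\Z/2)=(\Z/2)^{46}$, as claimed. I do not expect a genuine obstacle at this stage: the difficulty is concentrated entirely in Theorem \ref{t2} (the explicit determination of $H^{4}_{\alg}(X,\Z/2)$ via the monodromy and specialization congruences \eqref{c3}, \eqref{c4}) and in the citation of the deep Bloch--Kato--based theorem of Colliot-Th\'el\`ene and Voisin. The only point requiring genuine care is to confirm that the phrase ``supported on a surface'' in the hypothesis of \cite[Theorem 3.9]{CTV} is indeed satisfied by the much stronger statement $CH_{0}(X)=\Z$ supplied by Lemma \ref{l1}(1).
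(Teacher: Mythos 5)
Your proposal is correct and matches the paper's own argument exactly: the paper likewise deduces the corollary by combining the Colliot-Th\'el\`ene--Voisin theorem \cite[Theorem 3.9]{CTV} (whose hypothesis is supplied by Lemma \ref{l1}(1), i.e.\ $CH_{0}(X)=\Z$ via Bloch--Kas--Lieberman) with the computation $Z^{4}(X)[2]=(\Z/2)^{46}$ of Theorem \ref{t2}. No gaps; the verification that $CH_{0}(X)=\Z$ implies ``supported on a surface'' is handled just as you describe.
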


\appendix
\section{Vanishing of the reciprocity obstruction\texorpdfstring{\\\smallskip}{ (}by Olivier Wittenberg\texorpdfstring{}{)}}

\renewcommand{\et}{\textup{\'{e}t}}

In this appendix, we prove that 
the vanishing of the reciprocity obstruction
to the existence of a $0$\nobreakdash-cycle of degree~$1$ is a general fact that holds
for all $\O$\nobreakdash-acyclic varieties
over the function field~$F$
of a complex curve, and, in fact, for all smooth proper varieties~$Y$ over~$F$
such that $\chi(Y,\O_Y)=1$.
We actually prove the following slightly more general statement, in the spirit of \cite{ELW}.

\begin{thm}
\label{app:th:main}
Let $F=\C(C)$ be the function field of a smooth proper irreducible complex curve~$C$.
Let~$Y$ be a smooth proper variety over~$F$ and~$E$ be a coherent sheaf on~$Y$.
Then there exists a collection
$(\alpha_p)_{p \in C(\C)} \in \prod_{p \in C(\C)} \CH_0(Y_{F_p})$
of local $0$\nobreakdash-cycle classes of degree~$\chi(Y,E)$
that belongs to the left kernel of the natural pairing
\begin{align}
\label{app:pairingCH0}
\Bigg(\mkern2mu\prod_{p \in C(\C)} \CH_0(Y_{F_p})\Bigg) \times H^1_{\et}(Y,\Q/\Z(1)) \to \Q/\Z\rlap{.}
\end{align}
In other words,
there is no reciprocity obstruction to the existence of a $0$\nobreakdash-cycle of degree $\chi(Y,E)$
on~$Y$.
\end{thm}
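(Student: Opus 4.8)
The plan is to reduce the statement to a local-global computation by exploiting the fact that the pairing \eqref{app:pairingCH0} factors through a reciprocity sequence analogous to the one constructed in Section \ref{FHPRO}. Concretely, for each class $\xi \in H^1_{\et}(Y,\Q/\Z(1))$ the evaluation of a collection $(\alpha_p)$ is a sum of local terms $\sum_p \mathrm{ev}_p(\alpha_p, \xi)$ landing in $\bigoplus_p H^1_{\et}(F_p, \Q/\Z(1)) \to \Q/\Z$, so the assertion that $(\alpha_p)$ lies in the left kernel is the assertion that this total sum vanishes for every $\xi$. The key structural input is that $H^1_{\et}(Y,\Q/\Z(1))$ sits in an extension of $H^1_{\et}(Y_{\overline F},\Q/\Z(1))^{\Gal}$ by $H^1_{\et}(F,\Q/\Z(1)) = F^\ast \otimes \Q/\Z$, and the part coming from the base field contributes nothing to the reciprocity pairing by the classical reciprocity law on the complex curve $C$ (the sum of local invariants of an element of $\mathrm{Br}$ or of a global unit class is zero). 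Thus one only needs to control the pairing against classes $\xi$ whose restriction to the geometric generic fiber is nonzero.

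The heart of the argument is then to produce, for a single well-chosen $\xi$ restricting to a generator of (a cyclic quotient of) $H^1_{\et}(Y_{\overline F},\Q/\Z(1))$, a family of local $0$-cycles $(\alpha_p)$ each of degree $\chi(Y,E)$ whose local evaluations sum to zero. First I would use the Riemann--Roch / index argument already invoked in the body of the paper (the surjectivity of the pushforward $H^{2d-2}(Y,\Z_\ell(d-1)) \to \Z_\ell$, cf.\ the proof of Proposition \ref{Tate} and \cite[Proposition 7.3]{CTV}) to guarantee that over every completion $F_p \cong \C((t))$ there exists a local $0$-cycle of degree exactly $\chi(Y,E)$; this is where the hypothesis $\deg = \chi(Y,E)$ (rather than $1$) is essential, since it is forced by the local Riemann--Roch obstruction and nothing smaller is available in general. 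Next I would analyze how the evaluation $\mathrm{ev}_p(\alpha_p,\xi) \in \Z/n$ depends on the choice of local cycle: changing $\alpha_p$ by a cycle of degree $0$ changes the evaluation in a controlled way governed by the local reciprocity/Kummer pairing, so there is enough freedom to adjust the local invariants at the finitely many places where they are nonzero.

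The decisive step, and the one I expect to be the main obstacle, is arranging global consistency: I must choose the local adjustments at the finitely many bad places so that the total sum $\sum_p \mathrm{ev}_p(\alpha_p,\xi)$ vanishes \emph{simultaneously} for all $\xi$, not just for one. The clean way to do this is to show that the obstruction to finding such a compatible family is itself measured by a global cohomology class that vanishes for reasons of coherent-cohomological Euler characteristic — precisely, I would interpret the total evaluation as a map out of a group controlled by $H^2_{\et}(C, \mathcal F)$ for an appropriate sheaf $\mathcal F$ built from the Leray pushforward of $\Q/\Z(1)$, and use that the degree constraint $\chi(Y,E)$ trivializes the relevant image. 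An alternative, likely cleaner route would be to construct a single global adelic object: lift the geometric class to an actual multisection-type cycle on the total space of a model $\mathcal Y \to C$, compute its degree over $C$ via Riemann--Roch to be $\chi(Y,E)$, and then invoke the global reciprocity law on $C$ directly to conclude that its local evaluations sum to zero. The technical difficulty in either approach is ensuring the chosen cycles genuinely have local degree $\chi(Y,E)$ at \emph{every} place while remaining in the kernel of the pairing against the full group $H^1_{\et}(Y,\Q/\Z(1))$; handling the places of bad reduction of a chosen model of $Y$ carefully is where the real work lies.
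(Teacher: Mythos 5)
Your opening reduction and your use of Riemann--Roch to produce local $0$-cycles of degree $\chi(Y,E)$ at every completion (this is \cite[Theorem~1]{ELW}, cited for exactly this purpose in the appendix) match the paper's starting point, and you correctly identify the decisive difficulty: making $\sum_p \mathrm{ev}_p(\alpha_p,\xi)$ vanish simultaneously for \emph{every} $\xi\in H^1_{\et}(Y,\Q/\Z(1))$. But neither of your proposed routes closes this gap, and one of them cannot work. Your second route --- lifting to an actual multisection-type cycle on a model $X\to C$ of relative degree $\chi(Y,E)$ and then applying global reciprocity --- is impossible in general: such a cycle restricts to a global $0$-cycle of degree $\chi(Y,E)$ on $Y$, and the main theorem of this very paper exhibits Enriques fibrations with $\chi(Y,\O_Y)=1$ whose index is even, so no such cycle exists. (Indeed the appendix explicitly remarks that the theorem is striking precisely because the unobstructed collection exists ``for free'' even when no global cycle does.) Your preliminary reduction to ``a single well-chosen $\xi$ generating a cyclic quotient'' is also unjustified: $H^1_{\et}(Y_{\overline F},\Q/\Z(1))$ need not be cyclic, and the left-kernel condition is an infinite family of constraints that cannot be handled one class at a time by adjusting local cycles of degree zero.

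The missing idea, which is the heart of the paper's argument, is to replace algebraic cycles by \emph{topological} classes on the total space. Fixing a smooth proper model $f\colon X\to C$ with generic fibre $Y$, the paper proves: (i) for each $p$, the image of $\cl\colon \CH_0(Y_{F_p})\to H^{2d}_{\et}(Y_{F_p},\hat{\Z}(d))$ coincides with the image of $H^{2d}(X_p(\C),\Z(d))$, shown by moving local $0$-cycles to horizontal $1$-cycles on $X\times_C \Spec(\widehat{\O}_{C,p})$ and using \cite[9.1/9]{BLR} to realize arbitrary intersection numbers with the components of the fibre; (ii) the Chow-level pairing is induced by a cup-product/trace pairing on a restricted product of cohomology groups (projection formula); and (iii) --- the step your route (a) gestures at but does not supply --- the left kernel of that cohomological pairing is \emph{exactly} the image of the diagonal map from $\varinjlim_U H^{2d}_{\et}(X_U,\hat{\Z}(d))$, proved via a Poitou--Tate-type exact sequence in the style of \cite[Proposition~2.6]{Izq} applied to $Rf_*\Z/n\Z(d)[2d]$ together with Poincar\'e duality. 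This reformulates the theorem as: there exists $\alpha\in H^{2d}(X(\C),\Z(d))$ with $f_*\alpha=\chi(Y,E)$ --- with no algebraicity demanded of $\alpha$. That existence is then exactly what Riemann--Roch delivers in dual form: if the fibre class in $\NS(X)/(\mathrm{torsion})$ is divisible by $n$, then $n$ divides $\chi(Y,E)$ (a variant of \cite[Proposition~7.3]{CTV}), whence $\chi(Y,E)$ annihilates the cokernel of $f_*\colon H^{2d}(X(\C),\Z(d))\to H^0(C(\C),\Z)$. Without (i) and (iii), your plan of adjusting local cycles at finitely many bad places has no mechanism to verify membership in the left kernel against the full group $H^1_{\et}(Y,\Q/\Z(1))$, and the obstruction group you would need to trivialize is never identified.
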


Theorem~\ref{app:th:main} builds on a purely cohomological reinterpretation
of the reciprocity obstruction
(presented in
\textsection\textsection\ref{app:subsecrecollections}--\ref{app:subseccohreinterpretation})
and on a variant of
an argument of
Colliot-Th\'el\`ene and Voisin
itself based
on the Riemann--Roch theorem
(see~\textsection\ref{app:subsecrr}).

In the situation of Theorem~\ref{app:th:main},
local $0$\nobreakdash-cycles of degree $\chi(Y,E)$ had previously been shown to exist
in \cite[Theorem~1]{ELW}.  It may seem surprising that
the existence of a collection of local
$0$\nobreakdash-cycles that globally survives the reciprocity obstruction
comes ``for free'', without having to make any additional assumption on~$Y$,
especially in view of the negative answer to Question~\ref{q} now provided by
Ottem and Suzuki.

\subsection{Recollections on the reciprocity obstruction}
\label{app:subsecrecollections}

Let us first recall how the pairing~\eqref{app:pairingCH0},
introduced by Colliot-Th\'el\`ene and Gille in~\cite[\textsection5]{CTG},
is defined.

For $p \in C(\C)$, the Galois cohomology
group $H^1(F_p,\Q/\Z(1))$, where $\Q/\Z(1)$ denotes the torsion subgroup of~$\C^*$,
is canonically isomorphic to $\Q/\Z$.
We denote
this canonical isomorphism by $\inv_p:H^1(F_p,\Q/\Z(1))\isoto \Q/\Z$.
Mapping a closed point $q \in Y_{F_p}$ and a class $\beta \in H^1_{\et}(Y_{F_p},\Q/\Z(1))$
to $\inv_{p}\Cores_{F_p(q)/F_p}\beta(q) \in \Q/\Z$,
where $\Cores$ denotes the corestriction map in Galois cohomology,
uniquely extends to a bilinear pairing
\begin{align}
\CH_0(Y_{F_p}) \times H^1_{\et}(Y_{F_p},\Q/\Z(1)) \to \Q/\Z\rlap{.}
\end{align}
Denoting the latter by angle brackets,
the pairing~\eqref{app:pairingCH0} is then defined as the sum
\begin{align}
((\alpha_p)_{p \in C(\C)},\beta)\mapsto \sum_{p\in C(\C)} \langle \alpha_p, \beta \rangle\rlap{,}
\end{align}
which can be checked to have only finitely many non-zero terms.

The ``reciprocity law'', in this context, is the equality
\begin{align}
\label{app:reciprocitylaw}
\sum_{p \in C(\C)} \inv_{p} \gamma=0\rlap{,}
\end{align}
valid for any global class $\gamma \in H^1(F,\Q/\Z(1))$, and which amounts to the
assertion that any principal divisor on~$C$ has degree~$0$.
Applied to $\gamma=\Cores_{F(q)/F} \beta(q)$ for a closed point $q \in Y$,
it implies that the diagonal map $\CH_0(Y) \to \prod_{p \in C(\C)} \CH_0(Y_{F_p})$
takes values in the left kernel of~\eqref{app:pairingCH0}.
Equivalently,
an
element of $\prod_{p \in C(\C)} \CH_0(Y_{F_p})$
that does not belong to the left kernel of~\eqref{app:pairingCH0}
cannot come from $\CH_0(Y)$;
in this situation one says that there is a ``reciprocity obstruction''.

\subsection{From Chow groups to cohomology}

Let us fix a smooth and proper variety~$X$ over~$\C$ and a morphism $f:X \to C$
with generic fibre~$Y$.
Let~$X_p$ denote the fibre
of~$X$ above $p \in C$.
For any scheme~$Z$ of finite type over~$\C$, over~$F_p$ or over~$\widehat{\O}_{C,p}$,
and all integers $q,j$, we set
$H^{q}_{\et}(Z, \hat \Z(j))=\varprojlim_{n\geq 1} H^{q}_{\et}(Z, \Z/n\Z(j))$.
Let $d=\dim(Y)$ and $\Z(d)=(\sqrt{-1})^d\Z$.
Combining
the inverse of the isomorphism
$H^{2d}_{\et}(X \times_C \Spec(\widehat{\O}_{C,p}), \hat \Z(d))
\isoto H^{2d}_{\et}(X_p,\hat\Z(d)) $
given by the proper base change theorem
with
the canonical identification between singular and \'etale cohomology
$H^{2d}(X_p(\C),\Z(d)) \otimes_{\Z} \hat\Z = H^{2d}_{\et}(X_p,\hat\Z(d))$,
we obtain a canonical injection
\begin{align}
\label{app:eq:Zstructure}
H^{2d}(X_p(\C),\Z(d))
\hookrightarrow H^{2d}_{\et}(X \times_C \Spec(\widehat{\O}_{C,p}), \hat \Z(d))\rlap{.}
\end{align}
We shall consider the pull-back map
\begin{align}
\label{app:eq:pullbackmap}
H^{2d}_{\et}(X \times_C \Spec(\widehat{\O}_{C,p}), \hat \Z(d))\to
H^{2d}_{\et}(Y_{F_p}, \hat \Z(d))
\end{align}
and its composition
\begin{align}
\label{app:eq:pullbackmapZ}
H^{2d}(X_p(\C),\Z(d))\to
H^{2d}_{\et}(Y_{F_p}, \hat \Z(d))
\end{align}
with this injection.

\begin{prop}
\label{app:prop:imagecl}
For any $p \in C(\C)$, the image of the cycle class map to \'etale cohomology
$\cl:\CH_0(Y_{F_p}) \to H^{2d}_\et(Y_{F_p},\hat{\Z}(d))$
is equal to the image of~\eqref{app:eq:pullbackmapZ}.
\end{prop}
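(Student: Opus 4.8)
The plan is to realise both sides as restrictions to the generic fibre of classes living on the regular total space $\X=X\times_C\Spec(\widehat{\O}_{C,p})$, and to compare them through the special fibre. Write $i\colon X_p\hookrightarrow\X$ for the special fibre, a Cartier divisor that is moreover \emph{principal} (it is the divisor of $f^{\ast}t$ for a uniformiser $t$ at $p$), and $j\colon Y_{F_p}\hookrightarrow\X$ for its open complement. By the proper base change theorem the restriction $i^{\ast}\colon H^{2d}_{\et}(\X,\hat\Z(d))\isoto H^{2d}_{\et}(X_p,\hat\Z(d))$ is an isomorphism, so the map~\eqref{app:eq:pullbackmapZ} is precisely $j^{\ast}\circ(i^{\ast})^{-1}$ applied to the integral lattice $H^{2d}(X_p(\C),\Z(d))$. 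I would prove the two inclusions separately.

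For the inclusion of the image of $\cl$ into the image of~\eqref{app:eq:pullbackmapZ}, take a closed point $q$ of $Y_{F_p}$ and let $\overline{\{q\}}\subset\X$ be its Zariski closure, a horizontal integral curve finite over $\Spec(\widehat{\O}_{C,p})$. Its cycle class $\cl_{\X}(\overline{\{q\}})\in H^{2d}_{\et}(\X,\hat\Z(d))$ restricts under $j^{\ast}$ to $\cl(q)$, by compatibility of cycle classes with flat pullback, and under $i^{\ast}$ to $\cl_{X_p}(\overline{\{q\}}\cdot X_p)$, the class of an honest $0$\nobreakdash-cycle on the special fibre, by compatibility with the Gysin map of the Cartier divisor $X_p$. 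Since this last class is integral, $\cl(q)$ is the image under~\eqref{app:eq:pullbackmapZ} of an element of $H^{2d}(X_p(\C),\Z(d))$; extending $\Z$\nobreakdash-linearly gives the inclusion.

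For the reverse inclusion I would first use that the cycle class map $\CH_0(X_p)\to H^{2d}(X_p(\C),\Z(d))$ is surjective: the top integral cohomology of a proper complex variety of dimension $d$ is freely generated by the fundamental classes of its $d$\nobreakdash-dimensional components, each realised by the class of a closed point chosen in the smooth locus. It then suffices to show that for such a point $P$ the class $j^{\ast}(i^{\ast})^{-1}\cl_{X_p}(P)$ is a $0$\nobreakdash-cycle class on $Y_{F_p}$. When $P$ lies in the smooth locus of $f$ this is immediate: Hensel's lemma over the henselian base produces a section through $P$ meeting $X_p$ transversally, and running the argument of the previous paragraph backwards identifies $j^{\ast}(i^{\ast})^{-1}\cl_{X_p}(P)$ with the class of the corresponding $F_p$\nobreakdash-point.

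The hard part will be the case of singular or non-reduced special fibres, where a top-dimensional component $V$ of $X_p$ may be contained in the critical locus of $f$ or may occur with multiplicity $m>1$: then every horizontal curve meets $X_p$ along $V$ with multiplicity divisible by $m$, so horizontal curves directly realise only $m\cdot\cl_{X_p}(P)$ and not the generator $\cl_{X_p}(P)$ itself. Showing that even the non-divisible generators of $H^{2d}(X_p(\C),\Z(d))$ restrict to genuine $0$\nobreakdash-cycle classes on $Y_{F_p}$ is the heart of the matter, and I expect the naive multiplicity count to be insufficient here. The natural tools to bring to bear are the localization sequence
\[
H^{2d}_{X_p}(\X,\hat\Z(d))\to H^{2d}_{\et}(\X,\hat\Z(d))\xrightarrow{\,j^{\ast}\,} H^{2d}_{\et}(Y_{F_p},\hat\Z(d))
\]
together with the principality of $X_p$ over the local base, which makes the special fibre cohomologically trivial as a divisor on $\X$; a careful analysis of how $\ker(j^{\ast})$ interacts with the integral lattice, rather than a direct lifting of $0$\nobreakdash-cycles, is what I would expect to close the gap.
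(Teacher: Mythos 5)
Your proof of the first inclusion (cycle classes land in the image of~\eqref{app:eq:pullbackmapZ}) is essentially the paper's: one takes closures of closed points, which are horizontal curves, and restricts their classes on $X\times_C\Spec(\widehat{\O}_{C,p})$ to the two fibres. The reverse inclusion, which you explicitly leave open, is where the proposal has a genuine gap, and moreover your diagnosis of the difficulty rests on a miscount. If $V$ is a component of $X_p$ of multiplicity $m$, the integral generator of the corresponding summand $H^{2d}(V(\C),\Z(d))\cong\Z$ of $H^{2d}(X_p(\C),\Z(d))$ is detected by the intersection number of a horizontal curve with the \emph{reduced} divisor $V$, not with the Cartier divisor $X_p=\operatorname{div}(f^*t)$, whose $V$-part is $mV$. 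Since the total space $X\times_C\Spec(\widehat{\O}_{C,p})$ is regular and the residue field is algebraically closed, through any closed point $x$ lying in the smooth locus of $V_{\mathrm{red}}$ and on no other component there passes an integral closed curve $z$, finite and flat over $\widehat{\O}_{C,p}$, with $(z\cdot V)=1$: complete a local equation $g$ of $V$ at $x$ to a regular system of parameters and cut $z$ out by the remaining $d$ parameters. Such a $z$ has degree $m$ over the base --- which is consistent with, and explains, your divisibility observation --- yet it realises the generator attached to $V$ on the nose, not $m$ times it. This existence statement is \cite[9.1/9]{BLR} and is the key input of the paper's proof: writing $Z_1^h$ for the group of horizontal $1$-cycles, the map $Z_1^h\to\Z^n$, $z\mapsto((z\cdot X_{p,1}),\dots,(z\cdot X_{p,n}))$, is surjective, and a single commutative diagram comparing it with the cycle class map $Z_1^h\to H^{2d}_{\et}(X\times_C\Spec(\widehat{\O}_{C,p}),\hat\Z(d))$, restricted to the special and to the generic fibre (using also the surjectivity of $Z_1^h\to\CH_0(Y_{F_p})$), yields both inclusions at once, with no case distinction between smooth, singular and multiple fibres.

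Your proposed fallback for the hard case --- the localization sequence $H^{2d}_{X_p}\to H^{2d}\xrightarrow{j^*}H^{2d}_{\et}(Y_{F_p})$ together with an analysis of how $\ker(j^*)$ meets the integral lattice --- cannot close the gap even in principle: the assertion to be proved is that certain cohomology classes lie in the image of $\cl$, i.e.\ one must exhibit algebraic $0$-cycles on $Y_{F_p}$, and no manipulation of exact sequences of cohomology groups manufactures cycles. A geometric existence statement for horizontal curves transversal to a prescribed reduced component (BLR 9.1/9, or an equivalent moving argument over the henselian local ring) is unavoidable; once it is in hand, the localization sequence plays no role.
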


\begin{proof}
Let $X_{p,1},\dots,X_{p,n}$ denote the irreducible components of~$X_p$, endowed with
the reduced subscheme structure.
Let~$Z_1^h$ be the group of horizontal $1$\nobreakdash-cycles
on the scheme $X \times_C \Spec(\widehat{\O}_{C,p})$,
that is, the group of those $1$\nobreakdash-cycles whose support is flat over~$\widehat{\O}_{C,p}$.
Given $z \in Z_1^h$, let $(z\cdot X_{p,i})$ denote the intersection number
of~$z$ with the Cartier divisor $X_{p,i} \subset X_p$.
The map $Z_1^h \to \Z^n$, $z \mapsto ((z\cdot X_{p,1}),\dots,(z\cdot X_{p,n}))$
is surjective as a consequence of \cite[9.1/9]{BLR},
and
fits into a commutative diagram
\begin{align}
\begin{aligned}
\xymatrix@R=4ex@C=2em{
\Z^n &
Z_1^h
\ar@{->>}[l]
\ar[d] \ar@{->>}[r] & \CH_0(Y_{F_p}) \ar[d]^(.42)\cl \\
\ar@{=}[u]
H^{2d}(X_p(\C),\Z(d)) & \ar@{<-_{)}}[l]!(16,0)
H^{2d}_{\et}(X \times_C \Spec(\widehat{\O}_{C,p}), \hat \Z(d)) \ar[r] & H^{2d}_{\et}(Y_{F_p},\hat\Z(d))\rlap,
}
\end{aligned}
\end{align}
whose middle vertical arrow is the cycle class map (see \cite[(1.12)]{SaitoSato} for its definition),
whose lower horizontal arrows are the injection~\eqref{app:eq:Zstructure}
and the pull-back map~\eqref{app:eq:pullbackmap}, and whose leftmost vertical map
comes from the canonical isomorphisms $H^{2d}(X_{p,i}(\C),\Z(d))=\Z$ for $i \in \{1,\dots,n\}$
and from the decomposition $H^{2d}(X_p(\C),\Z(d))=\bigoplus_{i=1}^n H^{2d}(X_{p,i}(\C),\Z(d))$.
The desired statement now follows from the diagram.
\end{proof}

\subsection{Extension to a pairing between cohomology classes}

Let us denote by
$\prod'_{p \in C(\C)} H^{2d}_{\et}(Y_{F_p}, \hat \Z(d))$
the subgroup
of
$\prod_{p \in C(\C)} H^{2d}_{\et}(Y_{F_p}, \hat \Z(d))$
consisting of those families $(\alpha_p)_{p\in C(\C)}$ such that for all but finitely many $p\in C(\C)$,
the class~$\alpha_p$ belongs to the image of the pull-back map~\eqref{app:eq:pullbackmap}.
Letting~$f$ also stand for the
morphisms
$Y \to \Spec(F)$ and $Y_{F_p} \to \Spec(F_p)$ obtained from $f:X\to C$ by base change,
we consider the pairing
\begin{align}
\label{app:cohpairing}
\Bigg(\prod_{p \in C(\C)}{\rlap{\raise 5pt\hbox{$\mkern-13mu{}'$}}} H^{2d}_{\et}(Y_{F_p}, \hat \Z(d))\Bigg) \times H^1_{\et}(Y,\Q/\Z(1))
\to \Q/\Z
\end{align}
defined by $((\alpha_p)_{p \in C(\C)},\beta) \mapsto
\sum_{p \in C(\C)} \inv_p f_*(\alpha_p \smile \beta)$,
where 
\begin{align}
f_*:H^{2d+1}_{\et}(Y_{F_p},\Q/\Z(d+1))\to H^1(F_p,\Q/\Z(1))
\end{align}
is induced by the trace morphism associated with~$f$ (see \cite[XVIII, (2.13.2)]{SGA43}).
This pairing is well-defined as $\alpha_p\smile \beta$ vanishes for any~$p$ such that both~$\alpha_p$
and the image of~$\beta$
in $H^1_{\et}(Y_{F_p},\Q/\Z(1))$
come from the cohomology of $X \times_C \Spec(\widehat{\O}_{C,p})$, by the proper base change theorem.

\begin{prop}
\label{app:prop:samepairing}
The pairing
$\big(\mkern2mu\prod_{p \in C(\C)} \CH_0(Y_{F_p})\big) \times H^1_{\et}(Y,\Q/\Z(1)) \to \Q/\Z$
induced by~\eqref{app:cohpairing} via the maps
$\cl:\CH_0(Y_{F_p}) \to H^{2d}_\et(Y_{F_p},\hat{\Z}(d))$
(whose product takes values
in $\prod'_{p \in C(\C)} H^{2d}_{\et}(Y_{F_p}, \hat \Z(d))$
by Proposition~\ref{app:prop:imagecl})
is equal to the pairing~\eqref{app:pairingCH0}.
\end{prop}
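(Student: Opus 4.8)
The plan is to reduce both pairings to a single closed point and then to compare them by means of the projection formula together with the functoriality of the trace morphisms. Both pairings are bi-additive in the first variable, and for a fixed $\beta$ each is a finite sum of local contributions indexed by $p\in C(\C)$; since $\CH_0(Y_{F_p})$ is generated by classes of closed points, it suffices to fix $p\in C(\C)$ and a closed point $q\in Y_{F_p}$ and to show that $\langle[q],\beta\rangle=\inv_p f_*(\cl([q])\smile\beta)$, where on the right $\beta$ denotes the restriction to $Y_{F_p}$ of the given class in $H^1_\et(Y,\Q/\Z(1))$.

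Write $i_q\colon\Spec F_p(q)\hookrightarrow Y_{F_p}$ for the inclusion of the residue field, a closed point of codimension $d$. The first step is the identification $\cl([q])=(i_q)_*(1)$, where $1\in H^0_\et(\Spec F_p(q),\hat\Z)$ is the fundamental class and $(i_q)_*\colon H^0_\et(\Spec F_p(q),\hat\Z)\to H^{2d}_\et(Y_{F_p},\hat\Z(d))$ is the Gysin pushforward; this is the compatibility of the cycle class map of \cite{SaitoSato} with proper pushforward. The projection formula for the finite morphism $i_q$ then yields $\cl([q])\smile\beta=(i_q)_*(1)\smile\beta=(i_q)_*\bigl(1\smile i_q^*\beta\bigr)=(i_q)_*\bigl(\beta(q)\bigr)$, where $\beta(q)=i_q^*\beta\in H^1(F_p(q),\Q/\Z(1))$ is the evaluation already appearing in \textsection\ref{app:subsecrecollections}. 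Applying $f_*$ and using that the trace morphisms of \cite[XVIII]{SGA43} are functorial in composition, so that $f_*\circ(i_q)_*=(f\circ i_q)_*$, we find $f_*(\cl([q])\smile\beta)=(f\circ i_q)_*(\beta(q))$. Finally $f\circ i_q\colon\Spec F_p(q)\to\Spec F_p$ is the structure morphism of the finite extension $F_p(q)/F_p$, and its induced map on $H^1(\Q/\Z(1))$ is the corestriction $\Cores_{F_p(q)/F_p}$; hence $\inv_p f_*(\cl([q])\smile\beta)=\inv_p\Cores_{F_p(q)/F_p}(\beta(q))=\langle[q],\beta\rangle$, as desired.

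The skeleton above is short, and I expect the substance of the argument to lie entirely in the compatibilities it invokes: that the \'etale cycle class of a closed point is the Gysin image of the fundamental class, that the projection formula holds for the closed immersion $i_q$, and that the SGA 4 trace morphism is compatible with composition and reduces to Galois corestriction on spectra of fields. Keeping track of the cohomological degrees and Tate twists ($\cl([q])\in H^{2d}(\hat\Z(d))$ cupped with $\beta\in H^1(\Q/\Z(1))$ lands in $H^{2d+1}(\Q/\Z(d+1))$, which the trace sends to $H^1(F_p,\Q/\Z(1))$) serves as a consistency check that all the maps in play are the intended ones. The main care is therefore in assembling these standard formalisms coherently rather than in any single computation.
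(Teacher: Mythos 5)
Your proof is correct and follows essentially the same route as the paper's: both identify $\cl([q])$ with the Gysin image $i_*1$ of the unit class, apply the projection formula $i_*1 \smile \beta = i_*i^*\beta$, and use compatibility of the trace with composition to identify $f_*i_*i^*\beta$ with $\Cores_{F_p(q)/F_p}\beta(q)$. The reduction by bilinearity to a single closed point, which you spell out, is implicit in the paper's definition of the pairing~\eqref{app:pairingCH0} by extension from closed points.
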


\begin{proof}
Letting $i:\Spec(F_p(q)) \to Y_{F_p}$ denote the inclusion of a closed point~$q$
of~$Y_{F_p}$
and~$1$ the unit of $H^0(F_p(q),\hat{\Z})$,
we have $\cl(q)=i_*1$.
For $\beta \in H^1(Y,\Q/\Z(1))$,
the equality
$\Cores_{F_p(q)/F_p} \beta(q) = (f \circ i)_* \beta(q) = f_*i_*i^*\beta$
and the projection formula $i_*i^*\beta=i_*1 \smile \beta$
therefore yield the desired compatibility.
\end{proof}

\subsection{Cohomological reinterpretation}
\label{app:subseccohreinterpretation}

We are now in a position to reformulate the conclusion of Theorem~\ref{app:th:main} in purely cohomological terms.

\begin{prop}
\label{app:prop:cohreformulation}
For $m \in \Z$, the following conditions are equivalent:
\begin{enumerate}
\item[$(1)$]
There exists a collection
$(\alpha_p)_{p \in C(\C)} \in \prod_{p \in C(\C)} \CH_0(Y_{F_p})$
belonging to the left kernel of the pairing~\eqref{app:pairingCH0},
with $\deg(\alpha_p)=m$ for all $p\in C(\C)$.
\item[$(2)$]
There exists $\alpha \in H^{2d}(X(\C),\Z(d))$ with $f_*\alpha=m$ in $H^0(C(\C),\Z)=\Z$.
\end{enumerate}
\end{prop}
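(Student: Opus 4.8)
The plan is to prove the two implications separately, after recording the compatibility between the degree of a local $0$-cycle and the pushforward $f_*$. The degree map $\deg\colon\CH_0(Y_{F_p})\to\Z$ is the pushforward along the structure morphism $g_p\colon Y_{F_p}\to\Spec F_p$, and is compatible with $\cl$ and with the \'etale pushforward $H^{2d}_\et(Y_{F_p},\hat\Z(d))\to H^0(\Spec F_p,\hat\Z)=\hat\Z$. Since $f\colon X\to C$ restricts to $g_p$ after base change to $\widehat{\O}_{C,p}$, the value at $p$ of $f_*$ applied to a global class equals $g_{p,*}$ of its restriction to $Y_{F_p}$; as $H^0(C(\C),\Z)=\Z$, the condition $f_*\alpha=m$ is thus equivalent to $g_{p,*}(\alpha|_{Y_{F_p}})=m$ for every $p$. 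This identifies $\deg$ on $\CH_0$ with $f_*$ on cohomology.

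For $(2)\Rightarrow(1)$, I would restrict a class $\alpha\in H^{2d}(X(\C),\Z(d))$ with $f_*\alpha=m$ to each fibre $X_p$ and push the result through~\eqref{app:eq:pullbackmapZ}. By Proposition~\ref{app:prop:imagecl} the image is $\cl(\alpha_p)$ for some $\alpha_p\in\CH_0(Y_{F_p})$, and by the compatibility of~\eqref{app:eq:Zstructure}--\eqref{app:eq:pullbackmap} one has $\alpha|_{Y_{F_p}}=\cl(\alpha_p)$; the degree compatibility then gives $\deg(\alpha_p)=m$. To see that $(\alpha_p)_p$ lies in the left kernel of~\eqref{app:pairingCH0}, I would compute the pairing through~\eqref{app:cohpairing} using Proposition~\ref{app:prop:samepairing}: for $\beta\in H^1_\et(Y,\Q/\Z(1))$ the global class $\gamma=f_*(\alpha|_Y\smile\beta)\in H^1(F,\Q/\Z(1))$ has, by compatibility of the trace map with base change, local image $f_*(\cl(\alpha_p)\smile\beta)$ at each $p$, so the reciprocity law~\eqref{app:reciprocitylaw} yields $\sum_p\inv_p f_*(\cl(\alpha_p)\smile\beta)=0$, which is exactly the left-kernel condition.

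For $(1)\Rightarrow(2)$, which I expect to be the main obstacle, I would start from a collection $(\alpha_p)_p$ in the left kernel with $\deg(\alpha_p)=m$ and argue that the reciprocity condition forces the family $(\cl(\alpha_p))_p$ to come from a single global class. The key input is a global--local duality of Poitou--Tate/Artin--Verdier type over the geometric function field $F=\C(C)$ (of cohomological dimension $2$), yielding exactness in the middle of the localization complex
\[
H^{2d}_\et(X,\hat\Z(d))\longrightarrow \prod_{p\in C(\C)}{\rlap{\raise 5pt\hbox{$\mkern-13mu{}'$}}} H^{2d}_\et(Y_{F_p},\hat\Z(d))\longrightarrow \Hom\big(H^1_\et(Y,\Q/\Z(1)),\Q/\Z\big),
\]
whose second map is the pairing~\eqref{app:cohpairing}. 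The left-kernel hypothesis then places $(\cl(\alpha_p))_p$ in the image of the restriction map, producing $\tilde\alpha\in H^{2d}_\et(X,\hat\Z(d))=H^{2d}(X(\C),\Z(d))\otimes\hat\Z$ restricting to $\cl(\alpha_p)$ at each $p$. By the degree compatibility, $f_*\tilde\alpha=m$ in $H^0(C,\hat\Z)=\hat\Z$, so $m\in I\hat\Z\cap\Z=I\Z$, where $I\Z$ is the image of $f_*\colon H^{2d}(X(\C),\Z(d))\to\Z$; hence a genuine integral class $\alpha$ with $f_*\alpha=m$ exists, which is~(2).

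The delicate points are the precise formulation and proof of this global--local exactness with $\hat\Z$-coefficients and the trace pairing into $H^1(F_p,\Q/\Z(1))=\Q/\Z$: one must check that the restriction map lands in the restricted product and that the duality identifies its cokernel with the Pontryagin dual of $H^1_\et(Y,\Q/\Z(1))$. I emphasize that this exactness is a statement about \'etale cohomology classes, not about Chow cycles, which is consistent with the Ottem--Suzuki examples, where the analogous lifting of $0$-cycles fails; the passage from $\hat\Z$ back to an integral class matching the exact integer $m$ is then handled by the elementary identity $I\hat\Z\cap\Z=I\Z$.
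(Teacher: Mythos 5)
Your argument for $(2)\Rightarrow(1)$ is correct and uses the same ingredients as the paper: restriction to the fibres, Proposition~\ref{app:prop:imagecl} to realize the restrictions as cycle classes, Proposition~\ref{app:prop:samepairing} to compute the pairing cohomologically, and the reciprocity law~\eqref{app:reciprocitylaw} applied to $f_*(\alpha|_Y \smile \beta)$. The gap is in $(1)\Rightarrow(2)$, exactly at the exactness you defer, and it is not merely a missing verification: middle-exactness of
\begin{align*}
H^{2d}_{\et}(X,\hat\Z(d)) \longrightarrow {\prod_{p\in C(\C)}}' \, H^{2d}_{\et}(Y_{F_p},\hat\Z(d)) \longrightarrow \Hom\bigl(H^1_{\et}(Y,\Q/\Z(1)),\Q/\Z\bigr)
\end{align*}
with the \emph{proper} model $X$ as global term is false in general. (Incidentally, $\C(C)$ has cohomological dimension $1$ by Tsen, not $2$.) Duality sequences of Poitou--Tate/Artin--Verdier type over $F$ are proved over affine opens $U \subset C$, with integral conditions at the points of $U$; what is true --- and what the paper proves, in the lemma following the proposition, via Izquierdo's sequence for $\mathcal C = Rf_*\Z/n\Z(d)[2d]$ and a limit over $U$ --- is that the left kernel of~\eqref{app:cohpairing} equals the image of $\varinjlim_U H^{2d}_{\et}(X_U,\hat\Z(d))$. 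The two global terms genuinely differ: for $\beta \in H^{2d}_{\et}(X_U,\hat\Z(d))$, the obstruction to extending $\beta$ across a missing fibre $X_p$ lies in $H^{2d+1}_{X_p}(X,\hat\Z(d))$ and, by excision, is identified with the obstruction to lifting $\beta|_{Y_{F_p}}$ to $H^{2d}_{\et}(X\times_C \Spec(\widehat{\O}_{C,p}),\hat\Z(d))$. Whenever this obstruction group is non-trivially hit (e.g.\ $d=1$ and $X \to C$ a rational elliptic fibration: purity identifies the group with $H^1(X_p,\hat\Z)\cong \hat\Z^2$, onto which the boundary map surjects since $H^3_{\et}(X,\hat\Z(1))=0$), one obtains families in the left kernel that are non-integral at $p$, hence \emph{not} restrictions of any class on $X$; so your sequence is not exact in the middle, and $(1)\Rightarrow(2)$ does not follow from it.

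The repair requires an input your argument never uses at this point: at \emph{every} $p$ --- not just at almost every $p$ --- your local classes are cycle classes, so by Proposition~\ref{app:prop:imagecl} each $\cl(\alpha_p)$ lifts through~\eqref{app:eq:pullbackmapZ} to $H^{2d}(X_p(\C),\Z(d))$, in particular to $H^{2d}_{\et}(X\times_C\Spec(\widehat{\O}_{C,p}),\hat\Z(d))$. Granting the limit form of the duality, you obtain $\beta$ on some $X_U$ with $\beta|_{Y_{F_p}} = \cl(\alpha_p)$ for all $p$; the integrality at the finitely many $p \in C\setminus U$ kills the excision obstructions above, so $\beta$ extends to a class $\tilde\alpha \in H^{2d}_{\et}(X,\hat\Z(d))$ with $f_*\tilde\alpha = m$ in $\hat\Z$ --- this is precisely the paper's intermediate condition $(2'')$, interposed between the limit statement and $(2')$. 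Your concluding lattice step, $m \in I\hat\Z \cap \Z = I\Z$ via $H^{2d}_{\et}(X,\hat\Z(d)) = H^{2d}(X(\C),\Z(d))\otimes_{\Z}\hat\Z$, is correct and coincides with the paper's passage from $(2')$ to $(2)$.
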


\begin{proof}
Let us consider the following variants of~(2):
\begin{enumerate}
\item[$(2')$]
There exists $\alpha \in H^{2d}_{\et}(X,\hat\Z(d))$ with $f_*\alpha=m$ in $H^0_{\et}(C,\hat\Z)=\hat\Z$.
\item[$(2'')$]
Same as $(2')$, except that we impose, in addition, that 
the image of~$\alpha$
 in $H^{2d}_{\et}(Y_{F_p},\hat\Z(d))$ lies in the image of~\eqref{app:eq:pullbackmapZ}
for all $p\in C(\C)$.
\end{enumerate}
It is clear that $(2)\Rightarrow(2'')\Rightarrow(2')$.
On the other hand, we have $(2')\Rightarrow(2)$
as
the maps $f_*:H^{2d}_{\et}(X,\hat\Z(d)) \to H^0_{\et}(C,\hat\Z)$
and $f_*:H^{2d}(X(\C),\Z(d)) \to H^0(C(\C),\Z)$
share the same cokernel,
by the comparison between singular and \'etale cohomology.
Hence $(2)$, $(2')$ and $(2'')$ are equivalent.
Now~$(2'')$ is in turn equivalent to the existence of
$\alpha\in\varinjlim H^{2d}_{\et}(X_U,\hat\Z(d))$,
where~$U$ ranges over the dense open subsets of~$C$ and~$X_U$ denotes the inverse image of~$U$ in~$X$,
whose image~$\alpha_p$ in $H^{2d}_{\et}(Y_{F_p},\hat\Z(d))$,
for all $p \in C(\C)$,
belongs to the image of~\eqref{app:eq:pullbackmapZ}
and satisfies $f_*\alpha_p=m \in \hat\Z$.
Finally, this condition is equivalent to~$(1)$
in view of the next lemma, of Proposition~\ref{app:prop:imagecl}
and of Proposition~\ref{app:prop:samepairing}.
\end{proof}

\begin{lem}
The left kernel of the pairing~\eqref{app:cohpairing}
coincides with
the image of the diagonal map
$\varinjlim H^{2d}_{\et}(X_U,\hat \Z(d))\to \prod'_{p \in C(\C)} H^{2d}_{\et}(Y_{F_p}, \hat \Z(d))$,
where~$U$ ranges over the dense open subsets of~$C$.
\end{lem}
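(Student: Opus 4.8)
The plan is to establish the asserted equality as the middle exactness of the three-term complex
\[
\varinjlim_U H^{2d}_\et(X_U,\hat\Z(d)) \xrightarrow{\ \rho\ } \prod'_{p\in C(\C)} H^{2d}_\et(Y_{F_p},\hat\Z(d)) \xrightarrow{\ \Phi\ } H^1_\et(Y,\Q/\Z(1))^{\vee},
\]
where $\rho$ is the diagonal map (whose components are the pull-back maps~\eqref{app:eq:pullbackmap}, and which lands in the restricted product because $\alpha\in H^{2d}_\et(X_U,\hat\Z(d))$ is unramified at every $p\in U$), and $\Phi$ is the map adjoint to~\eqref{app:cohpairing}, valued in the Pontryagin dual. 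I will use the continuity identification $\varinjlim_U H^{2d}_\et(X_U,\hat\Z(d))=H^{2d}_\et(Y,\hat\Z(d))$ (the generic fibre is the limit of the $X_U$), and I prove the two inclusions $\operatorname{im}\rho\subseteq\ker\Phi$ and $\ker\Phi\subseteq\operatorname{im}\rho$ separately.

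The inclusion $\operatorname{im}\rho\subseteq\ker\Phi$ is the reciprocity direction. Given a global class represented by $\alpha$ on some $X_U$ and a class $\beta\in H^1_\et(Y,\Q/\Z(1))$, I spread $\beta$ out to a class on $X_{U'}$ over a dense open $U'\subseteq U$ and set $\gamma=f_*(\alpha\smile\beta)\in H^1(F,\Q/\Z(1))$, using the relative trace. Since the trace commutes with the base changes $\Spec F_p\to\Spec F$, one gets $f_*(\alpha_p\smile\beta)=\gamma|_{F_p}$ for every $p$, so that $\sum_p\inv_p f_*(\alpha_p\smile\beta)=\sum_p\inv_p(\gamma|_{F_p})$, a sum with finitely many nonzero terms (the terms for $p\in U'$ vanish because $\gamma$ is unramified there), which is zero by the reciprocity law~\eqref{app:reciprocitylaw}.

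The inclusion $\ker\Phi\subseteq\operatorname{im}\rho$ is the substantial direction, and it is an instance of Poitou--Tate duality for the field $F=\C(C)$. First, Poincaré duality for $Y_{F_p}$ over the local field $F_p\cong\C((t))$, which has cohomological dimension $1$ so that $Y_{F_p}$ has cohomological dimension $2d+1$ and $H^{2d+1}_\et(Y_{F_p},\Q/\Z(d+1))\xrightarrow{f_*}H^1(F_p,\Q/\Z(1))\xrightarrow{\inv_p}\Q/\Z$ is the fundamental class, shows that the local pairing is perfect; hence $H^{2d}_\et(Y_{F_p},\hat\Z(d))$ is the Pontryagin dual of $H^1_\et(Y_{F_p},\Q/\Z(1))$, and $\ker\Phi$ is precisely the orthogonal complement, under the sum of these local dualities, of the image of the localization map $H^1_\et(Y,\Q/\Z(1))\to\prod'_p H^1_\et(Y_{F_p},\Q/\Z(1))$. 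The claim thus reduces to the exactness of the displayed complex at its middle term.

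To prove that exactness I would combine relative Poincaré duality for the proper smooth morphism $f$, which sets up via the relative trace a duality on $C$ between $Rf_*\hat\Z(d)$ and $Rf_*\Q/\Z$ (up to the appropriate shift and twist), with Poincaré duality for the proper smooth curve $C$ over $\C$, whose fundamental class is $H^2_\et(C,\Q/\Z(1))=\Q/\Z$. Concretely, the three groups above are the hypercohomology of $Rf_*\hat\Z(d)$ over $\Spec F=\varinjlim_U U$ and over the $\Spec F_p$, and the summation map $\bigoplus_p H^1(F_p,\Q/\Z(1))\to\Q/\Z$ is exactly the degree map encoding the properness of $C$; the required middle exactness is then the Artin--Verdier/Poitou--Tate statement for $C$ with these coefficients. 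I expect the main obstacle to be precisely this global duality: one must check that the fundamental class of $C$ is compatible, through the relative trace of $f$, with the local invariants $\inv_p$ appearing in~\eqref{app:cohpairing}, so that the Poitou--Tate pairing coincides with~\eqref{app:cohpairing}; granting this, the identification of $\ker\Phi$ with the orthogonal complement above (Proposition~\ref{app:prop:imagecl} and local duality) together with the continuity identification of the global group as a colimit over $U$ finishes the argument.
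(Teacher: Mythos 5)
Your strategy is in essence the one the paper follows: read the three-term complex as a Poitou--Tate/Artin--Verdier-type exact sequence for the curve $C$ with coefficients in the complex $Rf_*\Z/n\Z(d)[2d]$, use relative Poincar\'e duality ($f^!\Q/\Z(1)=\Q/\Z(d+1)[2d]$, so the dual complex is $Rf_*\Z/n\Z(1)$, whence the dual term $H^1_{\et}(Y,\Q/\Z(1))^D$), use local duality over $F_p\cong\C((t))$, and pass to limits. The genuine gap is that the middle exactness you defer to ``the Artin--Verdier/Poitou--Tate statement for $C$ with these coefficients'' \emph{is} the entire content of the lemma, and there is no off-the-shelf theorem to quote in this setting: $F=\C(C)$ is not a global field in the classical sense, and the coefficients are a bounded constructible complex, not a finite Galois module. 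The paper closes exactly this hole by running the proof of \cite[Proposition~2.6]{Izq} (duality for function fields over higher local fields, adapted to the base $\C$) for a bounded constructible complex $\mathcal{C}$ on $C$, obtaining for each $n$ and each small enough dense open $U\subset C$ an exact sequence $H^0_{\et}(U,\mathcal{C})\to \prod_{p\in C(\C)\setminus U(\C)}H^0_{\et}(F_p,\mathcal{C})\times\prod_{p\in U(\C)}H^0_{\et}(\widehat{\O}_{C,p},\mathcal{C})\to H^1_{\et}(F,\mathcal{C}')^D$ with $\mathcal{C}'=R\mathcal{H}om(\mathcal{C},\Q/\Z(1))$, and then specializing to $\mathcal{C}=Rf_*\Z/n\Z(d)[2d]$ and applying $\varinjlim_U\varprojlim_n$. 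You correctly identify the compatibility of the trace map with the local invariants $\inv_p$ as the point to check, but you neither prove it nor supply a usable reference, so the crux of the lemma remains assumed rather than established; your other inclusion (image contained in the left kernel, via spreading out $\beta$, vanishing of $H^1_{\et}(\widehat{\O}_{C,p},\Q/\Z(1))$, and the reciprocity law~\eqref{app:reciprocitylaw}) is fine but is the easy half.

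Two further technical points. First, your ``continuity identification'' $\varinjlim_U H^{2d}_{\et}(X_U,\hat\Z(d))=H^{2d}_{\et}(Y,\hat\Z(d))$ is unjustified: continuity holds for each finite coefficient sheaf $\Z/n\Z(d)$, but $\hat\Z(d)$-cohomology is defined as $\varprojlim_n$, and the filtered colimit over $U$ need not commute with this inverse limit. This is precisely why the lemma is stated with the colimit $\varinjlim_U H^{2d}_{\et}(X_U,\hat\Z(d))$ and why the paper applies $\varinjlim_U\varprojlim_n$ to the finite-level sequences in that order, never replacing the colimit by cohomology of $Y$; your reformulation ``over $\Spec F=\varinjlim_U U$'' risks proving a statement about a different group. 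Second, passing from $\Z/n\Z$ to $\hat\Z(d)$ coefficients requires exactness of $\varprojlim_n$ on the three-term sequences; this does hold here because the finite-coefficient cohomology groups involved are finite (Mittag--Leffler), but it needs to be said. With the duality input properly sourced along Izquierdo's lines and these limit manipulations made precise, your outline would match the paper's proof.
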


\begin{proof}
For any bounded complex $\mathcal C$
of constructible \'etale sheaves of abelian groups on~$C$
and for any small enough dense open subset~$U$ of~$C$,
one obtains, by proceeding exactly as in the proof of
\cite[Proposition~2.6]{Izq}, an exact sequence
\begin{align}
H^0_{\et}(U,\mathcal C) \to
\prod_{p \in C(\C) \setminus U(\C)} H^0_{\et}(F_p,\mathcal C)
\times
\prod_{p \in U(\C)} H^0_{\et}(\widehat{\O}_{C,p},\mathcal C)
\to
H^1_{\et}(F,\mathcal C')^D\rlap{,}
\end{align}
where $\mathcal C'=R\mathcal{H}\mkern-1muom(\mathcal C,\Q/\Z(1))$,
where~$D$ denotes the Pontrjagin dual,
 and where we still denote by~$\mathcal C$ (resp.\ $\mathcal C'$) the pull-back of~$\mathcal C$
(resp.\ $\mathcal C'$) to any of~$U$,
$\Spec(F)$, $\Spec(\widehat{O}_{C,p})$, $\Spec(F_p)$.
The lemma now follows by considering the exact sequences associated in this way
with
$\mathcal C=Rf_*\Z/n\Z(d)[2d]$
for $n \geq 1$
and applying $\varinjlim_U \varprojlim_n$ to these sequences,
in view of the canonical isomorphism $\mathcal C'=Rf_*\Z/n\Z(1)$ given by Poincar\'e duality;
as the groups $H^0_{\et}(U,\mathcal C)=H^{2d}_{\et}(X_U,\Z/n\Z(d))$ are all finite,
the resulting sequence is still exact (Mittag--Leffler criterion).
\end{proof}

\subsection{Applying the Riemann--Roch theorem}
\label{app:subsecrr}

The next statement and its proof are a variation on
a result of Colliot-Th\'el\`ene and Voisin \cite[Proposition~7.3~(ii)]{CTV},
in the style of \cite{ELW}.
When $E=\O_Y$, 
its formulation is parallel to \cite[Proposition~2.4]{ELW}.

\begin{prop}
\label{app:prop:divisiblebyn}
Let~$E$ be a coherent sheaf on~$Y$
and~$n \geq 1$ be an integer.
If the class of the fibres of $f:X\to C$
in $\NS(X)/(\mathrm{torsion})$
is divisible by~$n$,
then $\chi(Y,E)$ is divisible by~$n$.
\end{prop}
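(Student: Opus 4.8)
The plan is to extend $E$ to a coherent sheaf on the total space $X$ and then extract the divisibility from the Hirzebruch--Riemann--Roch theorem, exploiting that the fibre class squares to zero in cohomology.

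First I would choose a coherent sheaf $\mathcal E$ on $X$ whose restriction to the generic fibre $X_\eta=Y$ is isomorphic to $E$; such a sheaf exists because $E$ is already defined over $X_U$ for some dense open $U\subseteq C$, and a coherent sheaf on an open subset of the Noetherian scheme $X$ extends to all of $X$. By generic flatness, $\mathcal E$ is flat over $C$ above a dense open subset of $C$, over which the Euler characteristic $\chi(X_p,\mathcal E|_{X_p})$ is locally constant and hence equals $\chi(X_\eta,\mathcal E_\eta)=\chi(Y,E)$. Fixing a general $p\in C(\C)$ with $X_p$ smooth and $\mathcal E$ flat near $X_p$, the projection formula together with Hirzebruch--Riemann--Roch on $X_p$ gives
\[
\chi(Y,E)=\chi(X_p,\mathcal E|_{X_p})=\int_{X_p}\operatorname{ch}(\mathcal E|_{X_p})\,\Td(T_{X_p})=\int_X\operatorname{ch}(\mathcal E)\,\Td(T_X)\cdot[X_p],
\]
where I use that the normal bundle $N_{X_p/X}=f^*\O_C(p)|_{X_p}$ is trivial, so that $\Td(T_X)|_{X_p}=\Td(T_{X_p})$, and flatness at $p$ to identify $i_p^*\operatorname{ch}(\mathcal E)=\operatorname{ch}(\mathcal E|_{X_p})$.

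The divisibility then comes from the hypothesis on the fibre class. Since $[X_p]$ is divisible by $n$ in $\NS(X)/(\textup{torsion})$ and torsion classes vanish rationally, I may write $[X_p]=n\eta$ in $H^2(X,\Q)$ with $\eta\in\NS(X)$. The crucial observation is that $[X_p]^2=f^*([\textup{pt}]^2)=0$, which forces $\eta^2=0$ in $H^4(X,\Q)$. Choosing a line bundle $M$ with $c_1(M)=\eta$, this vanishing truncates $1-e^{-\eta}$ to $\eta$, so that
\[
\int_X\operatorname{ch}(\mathcal E)\,\eta\,\Td(T_X)=\int_X\operatorname{ch}(\mathcal E)\,(1-e^{-\eta})\,\Td(T_X)=\chi(X,\mathcal E)-\chi(X,\mathcal E\otimes M^{-1}),
\]
which is an integer by Hirzebruch--Riemann--Roch on $X$. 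Combining the two displays yields $\chi(Y,E)=n\big(\chi(X,\mathcal E)-\chi(X,\mathcal E\otimes M^{-1})\big)$, visibly divisible by $n$.

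I expect the only genuinely delicate point to be integrality. The intersection number $\int_X\operatorname{ch}(\mathcal E)\,\Td(T_X)\cdot[X_p]$ is a priori merely rational, and pulling out a factor of $n$ requires recognising the $\eta$-linear term as an honest difference of Euler characteristics on $X$; it is precisely the relation $[X_p]^2=0$, hence $\eta^2=0$ rationally, that makes $1-e^{-\eta}$ collapse to $\eta$ and thereby secures this. The torsion ambiguity in the lift $\eta$ is harmless, as torsion classes die in rational cohomology and so affect none of the integrals involved.
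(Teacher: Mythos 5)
Your proof is correct and takes essentially the same route as the paper's: both identify $\chi(Y,E)$ with the pairing of the fibre class against the $1$-dimensional component of $\mathrm{ch}\cdot\Td$ on $X$, use the divisibility hypothesis together with $[F]^2=0$ (so the lifted class squares to zero rationally, resp.\ is numerically trivial) to truncate the exponential to its linear term, and recognise the resulting $\frac{1}{n}\chi(Y,E)$ as a difference of two Euler characteristics on $X$, hence an integer. The only cosmetic differences are that the paper first reduces to a locally free extension $V$ and applies Riemann--Roch on the generic fibre directly, whereas you keep a coherent extension $\mathcal{E}$ and pass through a general closed fibre via flatness, and you twist by $M^{-1}$ where the paper twists by $\O_X(H)$.
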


\begin{proof}
As~$E$ is the restriction of a coherent sheaf on~$X$ and as any coherent
sheaf on~$X$ admits a finite resolution by
locally free sheaves, we may assume that~$E$ is the restriction
of a locally free sheaf~$V$ on~$X$, which we henceforth fix.

Let~$F$ be a fibre of~$f$.
By assumption, there exist divisors~$H$ and~$M$ on~$X$ such that the equality
\begin{align}
\label{app:eq:nhmf}
n[H] = [M] + [F]
\end{align}
holds in $\NS(X)$ and such that $[M]$ belongs to the torsion
subgroup of $\NS(X)$.
By the last condition, the divisor~$M$ is numerically trivial. Hence so is the cycle class $H^2 \in \CH^2(X)$,
in view of~\eqref{app:eq:nhmf} and of the equality $F^2=0 \in \CH^2(X)$.

The Hirzebruch--Riemann--Roch theorem applied
to the locally free sheaves~$V$
and $V \otimes_{\O_X} \O_X(H)$ on~$X$ therefore gives us the equality
\begin{align*}
\chi(X,V \otimes_{\O_X} \O_X(H)) - \chi(X,V) &=
\deg(\mathrm{ch}(V) \cdot (H + H^2/2 + \cdots) \cdot \Td(T_X))\\
&= \deg(H\cdot Z)
= \frac{1}{n}\deg(F\cdot Z)\rlap{,}
\end{align*}
where $Z \in \CH_1(X)\otimes_\Z \Q$ denotes the $1$\nobreakdash-dimensional
component of~$\mathrm{ch}(V)\cdot \Td(T_X)$.
By the Hirzebruch--Riemann--Roch theorem applied to the locally free sheaf~$E$ on~$Y$,
we also have the equality $\chi(Y,E)=\deg(\mathrm{ch}(E)\cdot \Td(T_Y))$,
which can be rewritten as $\chi(Y,E)=\deg(F\cdot Z)$
since $\Td(T_X)|_Y = \Td(T_Y)$ and $\mathrm{ch}(V)|_Y=\mathrm{ch}(E)$.
Hence
\begin{align}
\chi(X,V \otimes_{\O_X} \O_X(H)) - \chi(X,V) =
\frac{1}{n}\chi(Y,E)
\end{align}
and we conclude that~$n$ divides $\chi(Y,E)$ since the left-hand side is an integer.
\end{proof}

\begin{cor}
\label{app:cor:existencealpha}
For any coherent sheaf~$E$ on~$Y$, there exists $\alpha \in H^{2d}(X(\C),{\Z}(d))$
such that $f_*\alpha=\chi(Y,E)$ in $H^{0}(C(\C),\Z)=\Z$.
\end{cor}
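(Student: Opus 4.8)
The plan is to compute the image of the push-forward homomorphism $f_*\colon H^{2d}(X(\C),\Z(d))\to H^0(C(\C),\Z)=\Z$ explicitly, and then to show that $\chi(Y,E)$ lies in this image by invoking Proposition~\ref{app:prop:divisiblebyn}. The starting observation is that $f_*$ is computed by intersection with a fibre. Writing $[F]\in H^2(X(\C),\Z(1))$ for the class of a fibre of~$f$, so that $f^*[\mathrm{pt}]=[F]$ for the point class $[\mathrm{pt}]\in H^2(C(\C),\Z(1))$, the projection formula gives $f_*\alpha=\alpha\cdot[F]$ for every $\alpha\in H^{2d}(X(\C),\Z(d))$, where the right-hand side is the intersection number $\int_X\alpha\smile[F]$. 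Since any homomorphism to~$\Z$ kills torsion, and since the cup-product pairing $H^{2d}(X(\C),\Z(d))\times H^2(X(\C),\Z(1))\to\Z$ is perfect modulo torsion by Poincar\'e duality, the image of~$f_*$ is exactly $n_0\Z$, where $n_0$ denotes the divisibility of $[F]$ in the free quotient $H^2(X(\C),\Z(1))/(\mathrm{torsion})$ (the content of $[F]$ against all integral functionals).

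With this in hand, the statement reduces to the divisibility estimate $n_0\mid\chi(Y,E)$: once this is established, $\chi(Y,E)\in n_0\Z=\Image(f_*)$ and a class $\alpha$ with $f_*\alpha=\chi(Y,E)$ exists. To apply Proposition~\ref{app:prop:divisiblebyn}, which concerns divisibility in $\NS(X)/(\mathrm{torsion})$, I would identify $n_0$ with the divisibility of $[F]$ in $\NS(X)/(\mathrm{torsion})$. One inequality is automatic because $\NS(X)/(\mathrm{torsion})\hookrightarrow H^2(X(\C),\Z(1))/(\mathrm{torsion})$. For the reverse, I would use that $[F]$ is an algebraic, hence $(1,1)$, class: if $[F]=n_0\eta$ in $H^2(X(\C),\Z(1))/(\mathrm{torsion})$, then the image of~$\eta$ in $H^2(X(\C),\C)$ equals $[F]/n_0$ and is therefore of type $(1,1)$, so $\eta$ is an integral Hodge class and the Lefschetz $(1,1)$ theorem places it in $\NS(X)$. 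Thus $[F]$ is already divisible by $n_0$ in $\NS(X)/(\mathrm{torsion})$, and the two divisibilities coincide.

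Taking $n=n_0=$ the divisibility of $[F]$ in $\NS(X)/(\mathrm{torsion})$, Proposition~\ref{app:prop:divisiblebyn} then yields $n_0\mid\chi(Y,E)$, which completes the argument. The only step that is not purely formal is the identification of the two divisibilities in the previous paragraph; everything else is a combination of the projection formula, Poincar\'e duality, and the comparison between singular and \'etale cohomology already recorded in the appendix. I expect the Lefschetz $(1,1)$ input to be the point requiring the most care, although it is entirely standard.
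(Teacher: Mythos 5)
Your proof is correct and follows essentially the same route as the paper: both arguments rest on Proposition~\ref{app:prop:divisiblebyn} combined with Poincar\'e duality, the only difference being packaging --- the paper works with $\Q/\Z$ coefficients and dualizes the kernel of $f^*\colon H^2(C(\C),\Q/\Z(1))\to H^2(X(\C),\Q/\Z(1))$ into the cokernel of $f_*$, whereas you compute the image of $f_*$ directly as $n_0\Z$ with $n_0$ the divisibility of the fibre class. Your explicit Lefschetz $(1,1)$ step is exactly what the paper uses implicitly in asserting the inclusion $\NS(X)\otimes_\Z\Q/\Z\subset H^2(X(\C),\Q/\Z(1))$ (i.e.\ the saturation of $\NS(X)/(\mathrm{torsion})$ in $H^2(X(\C),\Z(1))/(\mathrm{torsion})$); note only that since $X$ is assumed proper but not necessarily projective, one should invoke Lefschetz $(1,1)$ in the form valid for smooth proper varieties (Deligne's Hodge theory plus the exponential sequence), which is the same footing on which the paper's inclusion stands.
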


\begin{proof}
According to Proposition~\ref{app:prop:divisiblebyn},
the integer $\chi(Y,E)$
annihilates
the kernel of
$f^*:\NS(C) \otimes_\Z \Q/\Z \to \NS(X) \otimes_\Z \Q/\Z$.
As
$\NS(C) \otimes_\Z \Q/\Z = H^2(C(\C),\Q/\Z(1))$
and
$\NS(X) \otimes_\Z \Q/\Z \subset H^2(X(\C),\Q/\Z(1))$,
the latter kernel coincides with the kernel of
$f^*:H^2(C(\C),\Q/\Z(1)) \to H^2(X(\C),\Q/\Z(1))$.
Thus, Poincar\'e duality implies that  $\chi(Y,E)$ also annihilates the cokernel of
$f_*:H^{2d}(X(\C),{\Z}(d)) \to H^0(C(\C),{\Z})$.
\end{proof}

Combining Proposition~\ref{app:prop:cohreformulation}
with Corollary~\ref{app:cor:existencealpha} now yields
Theorem~\ref{app:th:main}.

\end{document}